%
%
%
\documentclass[12pt]{amsart}

\usepackage{amssymb}
\usepackage{stmaryrd}
\usepackage{amscd}
\usepackage{calc}
\usepackage{ifthen}

\input prepictex   \input pictex    \input postpictex

\newcounter{boxsize}
\setcounter{boxsize}{3}
\newcounter{tempcounter}
\newcounter{tempdist}
\newcounter{tempe}
\newcommand{\smallentryformat}{\scriptstyle\sf}
\def\arr#1#2{\arrow <2mm> [0.25,0.75] from #1 to #2}
\def\num#1{$\scriptscriptstyle\sf#1$}
\def\smallsq#1{\plot 0 0  0.#1 0  0.#1 0.#1  0 0.#1  0 0 /}
\def\ssq{$\smallsq2$}
\newcommand\smbox{\put(0,0){\line(1,0){\value{boxsize}}}%
  \put(\value{boxsize},0){\line(0,1){\value{boxsize}}}%
  \put(0,0){\line(0,1){\value{boxsize}}}%
  \put(0,\value{boxsize}){\line(1,0){\value{boxsize}}}}
%
%
\newcommand\numbox[1]{\put(0,0)\smbox%
  \put(0,0){\makebox(\value{boxsize},\value{boxsize})[c]{%
      $\smallentryformat#1$}}}

\newcommand\singlebox[1]{\raisebox{-.4ex}{\begin{picture}(4,0)\setcounter{boxsize}{4}%
    \put(0,0)\smbox%
    \put(0,0){\makebox(\value{boxsize},\value{boxsize})[c]{%
      $\scriptstyle\sf#1$}}\end{picture}}}

\newcommand\vdotbox{\setcounter{tempcounter}{\value{boxsize}*2}
  \multiput(0,-\value{boxsize})(\value{boxsize},0)2{%
    \line(0,1){\value{tempcounter}}}
  \put(0,\value{boxsize}){\line(1,0){\value{boxsize}}}
  \put(0,-2){\makebox(\value{boxsize},\value{tempcounter})[c]{%
      $\scriptscriptstyle\vdots$}}}
\newcommand\vdotboxI{\setcounter{tempcounter}{\value{boxsize}*2}
  \put(0,-\value{boxsize}){\makebox(\value{boxsize},\value{tempcounter})[c]{%
      $\scriptscriptstyle\vdots$}}}
\newcommand\rectbox[1]{\setcounter{tempcounter}{#1*\value{boxsize}}
  \put(0,0){\line(1,0){\value{boxsize}}}
  \put(0,\value{tempcounter}){\line(1,0){\value{boxsize}}}
  \put(0,0){\line(0,1){\value{tempcounter}}}
  \put(\value{boxsize},0){\line(0,1){\value{tempcounter}}}}
\newcommand\rectboxL[1]{\setcounter{tempcounter}{#1*\value{boxsize}}
  \put(0,0){\line(1,0){\value{boxsize}}}
  \put(0,\value{tempcounter}){\line(1,0){\value{boxsize}}}
  \put(0,0){\line(0,1){\value{tempcounter}}}
  \setcounter{tempcounter}{2*\value{boxsize}}
  \put(\value{boxsize},0){\line(0,1){\value{tempcounter}}}}
\newcommand\rectboxJ[1]{\setcounter{tempcounter}{#1*\value{boxsize}}
  \put(0,0){\line(1,0){\value{boxsize}}}
  \put(0,\value{tempcounter}){\line(1,0){\value{boxsize}}}
  \put(\value{boxsize},0){\line(0,1){\value{tempcounter}}}}
%
\newcommand\boxes[2]{\setcounter{tempcounter}{#1*\value{boxsize}/2}
  \multiput(0,-\value{tempcounter})(0,\value{boxsize}){#1}\smbox
  \ifthenelse{#2=1}{\put(0,-\value{tempcounter}){%
      \makebox(\value{boxsize},\value{boxsize})[c]{$\smallentryformat1$}}}{}%
  \ifthenelse{#2=2}{\put(0,-\value{tempcounter}){%
      \makebox(\value{boxsize},\value{boxsize})[c]{$\smallentryformat2$}}}{}%
  \ifthenelse{#2=3}{\put(0,-\value{tempcounter}){%
      \makebox(\value{boxsize},\value{boxsize})[c]{$\smallentryformat2$}}%
    \setcounter{tempcounter}{\value{tempcounter}-\value{boxsize}}%
  \put(0,-\value{tempcounter}){\makebox(\value{boxsize},
    \value{boxsize})[c]{$\smallentryformat1$}}}{}}
\newcommand\rrboxes[2]{\setcounter{tempcounter}{#1*\value{boxsize}/2}
  \multiput(0,-\value{tempcounter})(0,\value{boxsize}){#1}\smbox
  \ifthenelse{#2=1}{\put(0,-\value{tempcounter}){%
      \makebox(\value{boxsize},\value{boxsize})[c]{$\smallentryformat3$}}}{}%
  \ifthenelse{#2=2}{\put(0,-\value{tempcounter}){%
      \makebox(\value{boxsize},\value{boxsize})[c]{$\smallentryformat4$}}}{}%
  \ifthenelse{#2=3}{\put(0,-\value{tempcounter}){%
      \makebox(\value{boxsize},\value{boxsize})[c]{$\smallentryformat4$}}%
    \setcounter{tempcounter}{\value{tempcounter}-\value{boxsize}}%
  \put(0,-\value{tempcounter}){\makebox(\value{boxsize},
    \value{boxsize})[c]{$\smallentryformat3$}}}{}
  \setcounter{tempcounter}{\value{tempcounter}-\value{boxsize}}
  \put(0,-\value{tempcounter}){\makebox(\value{boxsize},
    \value{boxsize})[c]{$\smallentryformat2$}}
  \setcounter{tempcounter}{\value{tempcounter}-\value{boxsize}}
  \put(0,-\value{tempcounter}){\makebox(\value{boxsize},
    \value{boxsize})[c]{$\smallentryformat1$}}
}

\newcommand\bboxes[2]{\setcounter{tempcounter}{#1*\value{boxsize}/2}
  \setcounter{tempdist}{-\value{tempcounter}+#2*\value{boxsize}-\value{boxsize}}
  \multiput(0,-\value{tempcounter})(0,\value{boxsize}){#1}\smbox
  \ifthenelse{#2=0}{}{\put(0,\value{tempdist}){%
      \makebox(\value{boxsize},\value{boxsize})[c]{$\bullet$}}}}

\newcommand\lboxes[2]{\setcounter{tempcounter}{#1*\value{boxsize}/2}
  \setcounter{tempdist}{-\value{boxsize}/2}
  \setcounter{tempe}{-\value{tempcounter}+#1*\value{boxsize}-#2*\value{boxsize}}
  \multiput(\value{tempdist},-\value{tempcounter})(0,\value{boxsize}){#1}\smbox
  \put(\value{tempdist},-\value{tempcounter}){%
    \makebox(\value{boxsize},\value{boxsize})[c]{$\smallentryformat 2$}}
  \setcounter{tempdist}{\value{tempdist}+\value{boxsize}}
  \multiput(\value{tempdist},\value{tempe})(0,\value{boxsize}){#2}\smbox
  \put(\value{tempdist},\value{tempe}){%
    \makebox(\value{boxsize},\value{boxsize})[c]{$\smallentryformat 1$}}}
\newcommand\pboxes[2]{\setcounter{tempcounter}{#1*\value{boxsize}/2}
  \setcounter{tempdist}{-\value{boxsize}/2}
  \setcounter{tempe}{-\value{tempcounter}+#1*\value{boxsize}-#2*\value{boxsize}}
  \multiput(\value{tempdist},-\value{tempcounter})(0,\value{boxsize}){#1}\smbox
  \put(\value{tempdist},-\value{tempcounter}){%
    \makebox(\value{boxsize},\value{boxsize})[c]{$\smallentryformat 2_{#2}$}}
  \setcounter{tempdist}{\value{tempdist}+\value{boxsize}}
  \multiput(\value{tempdist},\value{tempe})(0,\value{boxsize}){#2}\smbox
  \put(\value{tempdist},\value{tempe}){%
    \makebox(\value{boxsize},\value{boxsize})[c]{$\smallentryformat 1$}}}

\newcommand\rrlboxes[2]{\setcounter{tempcounter}{#1*\value{boxsize}/2}
  \setcounter{tempdist}{-\value{boxsize}/2}
  \setcounter{tempe}{-\value{tempcounter}+#1*\value{boxsize}-#2*\value{boxsize}}
  \multiput(\value{tempdist},-\value{tempcounter})(0,\value{boxsize}){#1}\smbox
  \put(\value{tempdist},-\value{tempcounter}){%
    \makebox(\value{boxsize},\value{boxsize})[c]{$\smallentryformat 4$}}
  \setcounter{tempcounter}{\value{tempcounter}-\value{boxsize}}
  \put(\value{tempdist},-\value{tempcounter}){%
    \makebox(\value{boxsize},\value{boxsize})[c]{$\smallentryformat 2$}}
  \setcounter{tempcounter}{\value{tempcounter}-\value{boxsize}}
  \put(\value{tempdist},-\value{tempcounter}){%
    \makebox(\value{boxsize},\value{boxsize})[c]{$\smallentryformat 1$}}
  \setcounter{tempdist}{\value{tempdist}+\value{boxsize}}
  \multiput(\value{tempdist},\value{tempe})(0,\value{boxsize}){#2}\smbox
  \put(\value{tempdist},\value{tempe}){%
    \makebox(\value{boxsize},\value{boxsize})[c]{$\smallentryformat 3$}}
  \setcounter{tempe}{\value{tempe}+\value{boxsize}}
  \put(\value{tempdist},\value{tempe}){%
    \makebox(\value{boxsize},\value{boxsize})[c]{$\smallentryformat 2$}}
  \setcounter{tempe}{\value{tempe}+\value{boxsize}}
  \put(\value{tempdist},\value{tempe}){%
    \makebox(\value{boxsize},\value{boxsize})[c]{$\smallentryformat 1$}}
}

\pagestyle{headings}
\setlength\parindent{0ex}
\setlength\unitlength{1mm}
\markboth{Markus Schmidmeier}{Hall polynomials via Automorphisms of Short Exact
                Sequences}

\newtheoremstyle{mytheorems}{9pt}{6pt}{\itshape}{0pt}{\sc}{.}{ }{}
\newtheoremstyle{myremarks}{6pt}{3pt}{\normalfont}{0pt}{\it}{.}{ }{}

\theoremstyle{mytheorems}
\newtheorem{theorem}{Theorem}
\newtheorem{lemma}{Lemma}
\newtheorem{corollary}{Corollary}
\newtheorem{proposition}{Proposition}

\theoremstyle{myremarks}
\newtheorem*{notation}{Notation}
\newtheorem*{example}{Example}

\newtheorem*{remark}{Remark}
\newtheorem*{definition}{Definition}

\renewcommand\qed{\phantom{m.} $\!\!\!\!\!\!\!\!$\nolinebreak\hfill\checkmark}

\DeclareMathOperator\type{{\rm type}}
\DeclareMathOperator\ind{{\rm ind}}
\DeclareMathOperator\soc{{\rm soc}}

\DeclareMathOperator\mepi{{\rm mepi}}
\DeclareMathOperator\rad{{\rm rad}}

\DeclareMathOperator\Hom{{\rm Hom}}
\DeclareMathOperator\End{{\rm End}}
\DeclareMathOperator\Ext{{\rm Ext}}
\DeclareMathOperator\Aut{{\rm Aut}}
\DeclareMathOperator\Cok{{\rm Cok}}
\DeclareMathOperator\up{\!\uparrow}
\DeclareMathOperator\down{\!\downarrow}

\renewcommand\Im{{\rm Im}}
\DeclareMathOperator\len{{\rm len}}
\newcommand\onto{\twoheadrightarrow}

\begin{document}
{\footnotesize [s-proto-4g.tex, \today]}
\vglue1truecm
\centerline{\Large Hall Polynomials}
\medskip\centerline{\Large via Automorphisms of Short Exact Sequences}
\bigskip
\centerline{By}
\medskip
\centerline{\sc Markus Schmidmeier}
\bigskip
\centerline{\rm Dedicated to Wolfgang Zimmermann}

\bigskip\medskip

\centerline{\parbox{10cm}{\footnotesize 
    {\it Abstract.}  
    We present a sum-product formula for the classical Hall polynomial 
    which is based on tableaux that have been introduced by T.\ Klein in 1969.
    In the formula, each summand corresponds to a Klein tableau, while the product is
    taken over the cardinalities of automorphism groups of short exact sequences which
    are derived from the tableau.
    For each such sequence, one can read off from the tableau the summands in an 
    indecomposable decomposition, and the size of their homomorphism and automorphism
    groups.
    Klein tableaux are refinements of Littlewood-Richardson tableaux in the sense that 
    each entry $\ell\geq2$ carries a subscript $r$. 
    We describe module theoretic and categorical properties shared by short exact sequences
    which have the same symbol $\ell_r$ in a given row in their Klein tableau.
    Moreover, we determine the interval in the Auslander-Reiten quiver in which
    indecomposable sequences of $p^n$-bounded groups 
    which carry such a symbol occur.
}}
\renewcommand{\thefootnote}{}
\footnotetext{{\it MSC 2010:}   5E10, 16G70, 20K27}
\footnotetext{{\it Keywords:}   Hall polynomial, 
  Littlewood-Richardson tableau, Klein tableau, prototype,  Auslander-Reiten quiver}
\footnotetext{{\it Affiliation:} Math.\ Sciences, Florida Atlantic Univ., 
                              Boca Raton, Florida 33431}
\footnotetext{{\it Contact:}  E-mail: {\tt mschmidm@fau.edu}, 
                          Tel: 1-561-297-0275, FAX: 1-561-297-2436}

\bigskip

The short exact sequences $E: 0\to A\to B\to C\to 0$ of finite abelian $p$-groups 
form the objects in the category $\mathcal S$; morphisms are
the commutative diagrams.
{\it Prototypes} or {\it  Klein tableaux,} as we call them in this paper,
were introduced in \cite{klein2}
as an isomorphism invariant for the objects in $\mathcal S$.
This invariant is finer than the partition triple $(\alpha,\beta,\gamma)$
which consists of the types of the groups $A,B,C$; 
and even finer than the Littlewood-Richardson (LR-) tableau
associated with $E$.
More precisely, the Klein tableau has the same entries as the LR-tableau, but
each entry bigger than one carries a subscript.  We recall that the largest
entry is the exponent $e$ of $A$.

\smallskip
It turns out that the Klein tableaux with entries at most two are in one-to-one
correspondence with the isomorphism types of short exact sequences with first term
$p^2$-bounded (Proposition~\ref{proposition-KRS-multiplicity}).
For an arbitrary sequence $E$ we denote the corresponding Klein tableau by $\Pi(E)$,
and for a Klein tableau $\Pi$ with entries at most 2 the corresponding sequence 
by $E(\Pi,p)$. 
One can not expect that Klein tableaux classify arbitrary short exact sequences
up to isomorphism since even for $e=3$ 
there occur parametrized families of pairwise non-isomorphic sequences
which have the same Klein tableau \cite{bounded}.

\smallskip
In fact, the combinatorial data contained in the Klein tableau $\Pi$
corresponding to $E$ describe exactly the
isomorphism types of the sequences
$$E|_2^\ell:\qquad 0\;\to\; p^{\ell-2}A/p^\ell A\;\to\; B/p^\ell A\;\to\; B/p^{\ell-2}A\;\to\; 0$$
(and hence also of the sequences
$$E|_1^\ell:\qquad 0\;\to\; p^{\ell-1}A/p^\ell A\;\to\; B/p^\ell A\;\to\; B/p^{\ell-1}A\;\to\; 0).$$

We will see that the tableaux $\Pi|_i^\ell=\Pi(E|_i^\ell)$ corresponding
to these sequences are obtained from $\Pi$ as suitable restrictions ($\ell\geq0, i=1,2$).

\smallskip
Given finite abelian $p$-groups $A,B,C$ of type $\alpha,\beta,\gamma$,
respectively, the classical Hall polynomial $g_{\alpha,\gamma}^\beta(p)$
counts the number of subgroups $U$ of $B$ such that  $U\cong A$
and $B/U\cong C$. 
Often, Hall polynomials are computed using LR-tableaux, see for example
\cite{macdonald},
but in earlier articles, the computation is based on Klein tableaux \cite{klein2}.

\smallskip
In the sum-product formula for Hall polynomials presented in this paper,
Klein tableaux control the counting process:  The sum is indexed by all
Klein tableaux of the given type, and in each summand all the short exact sequences
are determined uniquely, up to isomorphism, by suitable restrictions of the
corresponding tableau.

\begin{theorem}\label{theorem-Hall}
For partitions $\alpha,\beta,\gamma$, the classical Hall polynomial
can be computed as
$$g_{\alpha,\gamma}^\beta(p)\quad=\quad
   \sum_{\Pi} \; \prod_{\ell=2}^{e+1} \; 
         \frac{\#\Aut_{\mathcal S}E(\Pi|_1^\ell;p)}{%
               \#\Aut_{\mathcal S}E(\Pi|_2^\ell;p)}  $$
where the sum is taken over all Klein tableaux $\Pi$ of type $(\alpha,\beta,\gamma)$
and $e=\alpha_1$ is the exponent of the subgroup.
\end{theorem}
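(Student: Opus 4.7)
The plan is to rewrite $g_{\alpha,\gamma}^\beta(p)$ as an orbit count and then evaluate it one ``level'' $\ell$ at a time along the restrictions $\Pi|_2^\ell$, using Proposition~\ref{proposition-KRS-multiplicity} to turn each stage into a situation to which orbit--stabilizer applies with a single orbit. By orbit--stabilizer, two subgroups $U_1,U_2\le B$ (each of type $\alpha$ with quotient of type $\gamma$) yield isomorphic objects $E_{U_i}\colon 0\to U_i\to B\to B/U_i\to 0$ of $\mathcal S$ iff they lie in a common $\Aut(B)$-orbit, and the stabilizer of $U$ in $\Aut(B)$ is isomorphic via $\beta\mapsto(\beta|_U,\beta,\overline\beta)$ to $\Aut_{\mathcal S}(E_U)$; hence $g_{\alpha,\gamma}^\beta(p)=\sum_{[E]}\#\Aut(B)/\#\Aut_{\mathcal S}(E)$. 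Grouping this sum by Klein tableau, the theorem reduces to
$$\sum_{[E]:\,\Pi(E)=\Pi}\frac{\#\Aut(B)}{\#\Aut_{\mathcal S}(E)}\;=\;\prod_{\ell=2}^{e+1}\frac{\#\Aut_{\mathcal S}E(\Pi|_1^\ell;p)}{\#\Aut_{\mathcal S}E(\Pi|_2^\ell;p)}\qquad(\ast)$$
for each Klein tableau $\Pi$ of type $(\alpha,\beta,\gamma)$.

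To prove $(\ast)$ I would process the levels $\ell=2,3,\ldots,e+1$ from bottom to top, exploiting that each $\Pi|_2^\ell$ has entries at most $2$: by Proposition~\ref{proposition-KRS-multiplicity} the class $[E(\Pi|_2^\ell;p)]$ is a single $\Aut$-orbit on short exact sequences of its type. At level $\ell$, orbit--stabilizer inside $\Aut(B/p^\ell A)$ gives the number of subgroups $V\le B/p^\ell A$ whose short exact sequence is isomorphic to $E(\Pi|_2^\ell;p)$ as $\#\Aut(B/p^\ell A)/\#\Aut_{\mathcal S}E(\Pi|_2^\ell;p)$. The forgetful map from level $\ell$ to level $\ell-1$ is a fibration whose fibres encode the single-level transition by the slice $p^{\ell-1}A/p^\ell A$; the transition group acting transitively on a fibre is $\Aut_{\mathcal S}E(\Pi|_1^\ell;p)$, with stabilizer $\Aut_{\mathcal S}E(\Pi|_2^\ell;p)$. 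Taking the ratio of consecutive levels yields the stagewise factor $\#\Aut_{\mathcal S}E(\Pi|_1^\ell;p)/\#\Aut_{\mathcal S}E(\Pi|_2^\ell;p)$, and multiplying over $\ell=2,\ldots,e+1$ telescopes to $(\ast)$. The boundary case $\ell=e+1$ is consistent because $p^eA=0$ forces $E|_1^{e+1}$ to be the trivial sequence $0\to 0\to B\to B\to 0$, whose $\mathcal S$-automorphism group equals $\Aut(B)$ and absorbs the $\#\Aut(B)$ on the left of $(\ast)$.

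The main technical obstacle will be the identification at each level: one must show that the automorphisms of the $(\ell-1)$-level datum which extend to preserve the slice $p^{\ell-1}A/p^\ell A$ in $B/p^\ell A$ are exactly the $\mathcal S$-automorphisms of $E|_1^\ell$, and that further stabilization to the two-slice datum cuts the group down to $\Aut_{\mathcal S}E|_2^\ell$. This is where the module-theoretic properties of sequences sharing a row symbol $\ell_r$ in their Klein tableau enter: they provide the structural description of $E|_1^\ell$ as the genuine single-level transition extension and of $E|_2^\ell$ as its enlargement by the preceding slice. Once this identification is established, $(\ast)$ follows by multiplying the stagewise factors, and summing over Klein tableaux $\Pi$ of type $(\alpha,\beta,\gamma)$ yields the theorem.
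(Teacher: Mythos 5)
Your proposal takes a genuinely different route from the paper. The paper's proof of Theorem~\ref{theorem-Hall} has two ingredients. The first is Proposition~\ref{proposition-Klein-Hall}, the per-tableau telescoping identity $g(\Pi;q)=\prod_{\ell=2}^{e+1}h(\Pi|_2^\ell;q)$ with $h(\Pi|_2^\ell;q)=g(\Pi|_2^\ell;q)/g(\Pi|_1^\ell;q)$, which the paper derives from Corollaries~1--3 and Theorem~3.7 of Klein's 1969 article and does not reprove. The second is the orbit--stabilizer formula, used only to turn the factor $h(\Pi|_2^\ell;q)$ into $\#\Aut_{\mathcal S}E(\Pi|_1^\ell;q)\big/\#\Aut_{\mathcal S}E(\Pi|_2^\ell;q)$, using Proposition~\ref{proposition-KRS-multiplicity} for the uniqueness of the class $E(\Pi|_2^\ell;q)$. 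You keep the second ingredient, but replace the first by a direct level-by-level orbit count: reduce to $(\ast)$ per tableau, and then argue that the number of subgroups realizing $\Pi$ telescopes into a product of single-level transition counts, each of which is again computed by orbit--stabilizer. If carried through, this buys a self-contained proof of Klein's sum-product formula rather than a citation; the paper's route is shorter but leans on Klein's combinatorics as a black box.

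The gap is that you do not actually establish the telescoping. What is needed is an induction on $e$: writing $\Pi^{\downarrow}=\Pi|_{e-1}^e$, one must show that for any fixed $U\subset B$ with $\Pi(U\subset B)=\Pi^{\downarrow}$, the number of $A$ with $pA=U$ and $\Pi(A\subset B)=\Pi$ equals $\#\Aut_{\mathcal S}E(\Pi|_1^2;p)\big/\#\Aut_{\mathcal S}E(\Pi|_2^2;p)$, and in particular is independent of $U$. This uses the compatibility $\Pi\bigl(A/p^2A\subset B/p^2A\bigr)=\Pi|_2^2$ (so the count takes place in $B/p^2A\cong M(\gamma^2)$, not in $B$), the uniqueness of elementary embeddings to reduce to the standard model $E(\Pi|_1^2;p)$, and Proposition~\ref{proposition-KRS-multiplicity} to get transitivity of $\Aut_{\mathcal S}E(\Pi|_1^2;p)$ on the fibre. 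Your sketch gestures at all of this (``forgetful map,'' ``fibration,'' ``transition group'') but never defines the map, never addresses that the ambient group changes with $\ell$ (from $M(\gamma^\ell)$ to $M(\gamma^{\ell-1})$), and the ``main technical obstacle'' you identify --- extending automorphisms of a level-$(\ell-1)$ datum --- is not where the difficulty lies; the real work is the invariance of the fibre count over the choice of $U$, which is exactly what Klein's Theorem~3.7 encapsulates. So the approach is sound and genuinely different, but as written the telescoping is asserted rather than proved.
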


Each Klein tableau can be realized by short exact sequences; if the sequence $E$
has tableau $\Pi$ then the summand corresponding to $\Pi$ in the above formula
can be written as
$$g(\Pi;p)\;=\;
   \prod_{\ell=2}^{e+1}\frac{\#\Aut_{\mathcal S} E|^\ell_1}{\#\Aut_{\mathcal S} E|^\ell_2}.$$
This number counts the subgroups $U$ of $B$ such that the sequence 
$0\to U\to B\to B/U\to 0$ has Klein tableau $\Pi$.

\medskip
We describe module theoretic and categorical properties of the sequences corresponding
to a given Klein tableau.
We will see how the tableaux determine the size of certain homomorphism and automorphism
groups, in particular the size of the groups $\Aut_{\mathcal S} E|^\ell_i$, $i=1,2$, 
which occur in the formula.

\smallskip
Denote by $\mathcal S_2$ the full subcategory of 
$\mathcal S$ consisting of short exact sequences $E$ with $p^2A=0$.
The indecomposable objects in $\mathcal S_2$
are either {\it pickets,} i.e.\ sequences with cyclic middle term
of the form
$$P_\ell^m: \qquad 0\to (p^{m-\ell})\to 
                     \mathbb Z/(p^m)\to \mathbb Z/(p^{m-\ell})\to 0$$
where $\ell\leq \min\{m,2\}$.
Otherwise, they are {\it bipickets;} here the inclusion is a diagonal
embedding of $\mathbb Z/(p^2)$ in a direct sum of two cyclic $p$-groups.
\begin{eqnarray*} && \hspace{-5mm} T_2^{m,r}: \qquad \\
 && \hspace{-5mm} 0\to ((p^{m-2},p^{r-1}))\to
             \mathbb Z/(p^m)\oplus\mathbb Z/(p^r) \to
             \mathbb Z/(p^{m-1})\oplus\mathbb Z/(p^{r-1})\to 0\end{eqnarray*}
where $1\leq r\leq m-2$.
To unify notation, we put $T_2^{m,m-1}=P_2^m$.

\smallskip
Each object $T_2^{m,r}$ (where $1\leq r\leq m-1$, $(m,r)\neq (2,1)$)
occurs as end term of an Auslander-Reiten sequence in $\mathcal S_2$,
$$\mathcal A^{m,r}:\qquad 0\to X^{m,r}\to Y^{m,r}\stackrel{v_2^{m,r}}\to
                  T_2^{m,r}\to 0;$$
the remaining object $T_2^{2,1}=P_2^2$ is a projective object in $\mathcal S_2$.

\smallskip
Consider the lifting functor $\uparrow^i$ which maps a short exact sequence
$E:0\to A\to^fB\to C\to 0$ in $\mathcal  S$ to 
$$E\up^i: \qquad 0\;\to\; p^{-i}f(A)\;\subset\; B\;\to\; B/p^{-i}f(A)\;\to\; 0$$
where $p^{-i}f(A)=\{b\in B:p^ib\in f(A)\}$.

\smallskip
For $0\leq i\leq r-1\leq m-2$, the liftings $\mathcal A^{m,r}\up^i$ are 
short exact sequences, unless when $m=i+2, r=i+1$ in which case 
$T_2^{m,r}\up^i = P_m^m$ is a projective object in $\mathcal S_m$.

\smallskip
Suppose  that the Klein tableau $\Pi$ represents a short exact sequence $E$.
In our second theorem we interpret the entries in $\Pi$ in terms of the module
structure of $E$, and in terms of homological properties of $E$ as an object in the
category $\mathcal S$. 
Thus, the combinatorial data defining a Klein tableau have a precise algebraic
interpretation within the category $\mathcal S$ of short exact sequences.
In this sense, $\mathcal S$ provides a categorification for Klein tableaux.

\begin{theorem}\label{theorem-prototypes}
For a short exact sequence $E\in \mathcal S$ with Klein tableau $\Pi$
and natural numbers $\ell, m, r$ with $2\leq \ell\leq r+1\leq m$
the following numbers are equal.
\begin{enumerate}
\item The number of boxes $\singlebox{\ell_r}$ in the $m$-th row of $\Pi$.
\item The multiplicity of $T_2^{m,r}$ as a direct summand of $E|_2^\ell$.
\item The $\mathbb Z/(p)$-dimension of
$$\frac{\Hom_{\mathcal S}(E,T_2^{m,r}\up^{\ell-2})}{%
        \Im\Hom_{\mathcal S}(E,v_2^{m,r}\up^{\ell-2})}.$$
\end{enumerate}
\end{theorem}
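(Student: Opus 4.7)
The plan is to prove the equalities (1)$=$(2) and (2)$=$(3) separately, with the Auslander-Reiten theory providing the bridge between categorical multiplicity and the cokernel description in (3), and with a restriction-lifting adjunction reducing everything to the subcategory $\mathcal S_2$ where the Klein tableau is known to be a complete invariant.

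\textbf{Step 1: Reduction via restriction.} I first record the adjunction between the restriction functor $|_2^\ell\colon\mathcal S\to\mathcal S_2$ and the lifting functor $\up^{\ell-2}\colon\mathcal S_2\to\mathcal S$. Given $F\in\mathcal S_2$ with first term $p^2$-bounded, the first term of $F\up^{\ell-2}$ is $p^\ell$-bounded; any morphism $E\to F\up^{\ell-2}$ annihilates $p^\ell A$ in the first term and factors through $B/p^\ell A$, yielding a natural isomorphism
$$\Hom_{\mathcal S}(E,F\up^{\ell-2})\;\cong\;\Hom_{\mathcal S_2}(E|_2^\ell,F),$$
and the same identification intertwines the map induced by $v_2^{m,r}\up^{\ell-2}$ with the map induced by $v_2^{m,r}$. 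Hence quantity (3) coincides with
$$\dim_{\mathbb Z/(p)}\frac{\Hom_{\mathcal S_2}(E|_2^\ell,T_2^{m,r})}{\Im\Hom_{\mathcal S_2}(E|_2^\ell,v_2^{m,r})}.$$

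\textbf{Step 2: (2)$\Leftrightarrow$(3) via Auslander-Reiten theory.} Since $T_2^{m,r}$ is an indecomposable object of $\mathcal S_2$ admitting the Auslander-Reiten sequence $\mathcal A^{m,r}$, and the division ring $\End_{\mathcal S_2}(T_2^{m,r})/\rad$ is $\mathbb Z/(p)$ for each bipicket and picket involved, the standard consequence of the defining property of $\mathcal A^{m,r}$ is that the cokernel of $\Hom_{\mathcal S_2}(E|_2^\ell,v_2^{m,r})$, viewed as a $\mathbb Z/(p)$-vector space, has dimension equal to the multiplicity of $T_2^{m,r}$ as a summand in a Krull-Schmidt decomposition of $E|_2^\ell$. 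Combined with Step~1, this yields (2)$=$(3); the case $(m,r)=(2,1)$ is handled separately since $T_2^{2,1}=P_2^2$ is projective, in which case both (2) and (3) can be read off directly from the projective cover.

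\textbf{Step 3: (1)$\Leftrightarrow$(2) via the KRS uniqueness on $\mathcal S_2$.} By Proposition~\ref{proposition-KRS-multiplicity}, the Klein tableau is a complete isomorphism invariant for objects of $\mathcal S_2$; thus the multiplicity of each indecomposable $T_2^{m,r}$ in $E|_2^\ell$ is determined by $\Pi|_2^\ell$. The plan is to verify, by matching the constructions, that $\Pi|_2^\ell$ is obtained from $\Pi$ by retaining in each row $m$ exactly the boxes carrying the symbol $\ell_r$ (as a row-$m$ label corresponding to $T_2^{m,r}$) together with the boxes corresponding to pickets that survive the window $[\ell-2,\ell]$, and that this matching sends the box $\singlebox{\ell_r}$ in row $m$ of $\Pi$ to a copy of $T_2^{m,r}$ in the decomposition of $E|_2^\ell$. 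This is a direct translation of the definition of the subscript $r$ introduced by Klein: the symbol $\ell_r$ in row $m$ records precisely the pair $(m,r)$ describing the diagonal embedding in a bipicket summand of the associated $p^2$-bounded restriction.

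The main obstacle is Step~3: one must identify the combinatorial operation $\Pi\mapsto\Pi|_2^\ell$ with the algebraic operation $E\mapsto E|_2^\ell$ at the level of individual boxes and their subscripts. This requires unpacking Klein's original construction of the subscript so as to see that a symbol $\ell_r$ in row $m$ of $\Pi$ contributes a bipicket of type $T_2^{m,r}$ (rather than a picket or a bipicket of different parameters) to $E|_2^\ell$; once this compatibility is established the theorem follows by combining the three steps.
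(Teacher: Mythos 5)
Your proposal follows the same route as the paper. Your Step~1 is precisely the composition of the paper's adjunction (Lemma~\ref{lemma-adjoint}, $\Hom(E\down_{s},F)\cong\Hom(E,F\up^{s})$) with the minimal-left-approximation lemma (Lemma~\ref{lemma-approximation}, $\Hom(E|^\ell,F)\cong\Hom(E,F)$ for $F\in\mathcal S_\ell$), combined into a single $\mathcal S\leftrightarrow\mathcal S_2$ adjunction; your Step~2 is the same contravariant-defect argument the paper invokes for the AR sequence ending at $T_2^{m,r}$; and your Step~3 is exactly what the paper packages as Corollary~\ref{corollary-multiplicity}, which rests on the earlier lemmas showing $\Pi(E|_2^\ell)=\Pi|_2^\ell$ together with Proposition~\ref{proposition-KRS-multiplicity}. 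The only genuine outstanding work in your plan is the verification in Step~3 that the restriction $\Pi\mapsto\Pi|_2^\ell$ at the combinatorial level matches $E\mapsto E|_2^\ell$ at the module level; you correctly flag this as the main obstacle, and the paper discharges it by computing the Klein tableau of $E\down_{\ell-k}|^k$ directly from Klein's partition-sequence definition rather than by the box-by-box matching you sketch, but the content is the same.
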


The corresponding result for LR-tableaux is \cite[Theorem~1]{lr} 
where the entries in the tableau
are characterized in terms of the picket decomposition of the sequences $E|_1^\ell$, 
and in terms of spaces of homomorphisms from $E$ into pickets.

\smallskip
As a consequence of Theorem~\ref{theorem-prototypes}, we determine in 
Corollary~\ref{corollary-hom2bipicket} the size of the homomorphism groups
of the form $\Hom_{\mathcal S}(E,T_2^{m,r})$.

\smallskip
In Theorem~\ref{theorem-factor} we describe how all the sequences $E$ 
with Klein tableau containing a symbol $\singlebox{\ell_r}$ in the $m$-th
row can be detected within the category $\mathcal S(n)$ 
of short exact sequences of $p^n$-bounded finite abelian groups. 
Let $Z=T_2^{m,r}\up^{\ell-2}$ be as in the theorem.  We specify an object
$C$ depending only on $Z$ and $n$ such that for each sequence $E$,
the Klein tableau has a symbol $\singlebox{\ell_r}$ in the $m$-th row 
if and only if there are maps $f:C\to E$, $g:E\to Z$
with the property that the composition $gf$ does not factor through the sink map
for $Z$ in $\mathcal S_2\up^{\ell-2}$.
In this sense, the sequences $E$ which have a symbol $\singlebox{\ell_r}$
in the $m$-th row of their LR-tableau ``lie between'' $C$ and $Z$.

\bigskip
We describe the contents of the sections in this paper.

\smallskip
As a gentle introduction to Klein tableaux, we will review in Section~\ref{section-prototype} 
combinatorial
isomorphism invariants for short exact sequences.  We point out that LR-tableaux and
Klein tableaux are local invariants in the sense that they depend only on subfactors 
of the given sequence where the first term is $p^2$-bounded.

\medskip
In Section~\ref{section-categoryS2} we study the category $\mathcal S_2$.
To simplify notation, we consider the objects as 
embeddings $(A\subset B)$ of finite abelian $p$-groups 
where $A$ is $p^2$-bounded.
We show that Klein tableaux determine the decomposition of arbitrary
embeddings in $\mathcal S_2$ as  direct sums of pickets and bipickets 
(Proposition~\ref{proposition-KRS-multiplicity}),
and discuss the Auslander-Reiten quiver for this category.

\medskip
In Section~\ref{section-Hall} we give the proof of Theorem~\ref{theorem-Hall}.
The two main ingredients are a sum-product formula for Hall polynomials 
in \cite{klein2}, and our study of the action of the group $\Aut_{\mathbb Z}(B)$
on sequences of the form $0\to A\to B\to C\to 0$. 
We illustrate the computations in Theorem~\ref{theorem-Hall} in an example;
for this we use Corollary~\ref{corollary-hom2bipicket} to determine the numbers
$\#\Aut E|^\ell_i$, $i=1,2$.

\medskip
In Section~\ref{section-categorification} 
we discuss how Klein tableaux determine the position of 
short exact sequences within the category $\mathcal S$.
We give the proofs for Theorems~\ref{theorem-prototypes} and \ref{theorem-factor},
and illustrate both results with examples in the category $\mathcal S(5)$.

\smallskip
For results and terminology regarding Auslander-Reiten sequences 
and approximations, we refer the reader to \cite{ars} and \cite{as}.

\medskip
{\it Acknowledgements.} The results in this paper have been obtained while the author
was spending parts of his sabbatical leave in 2008/09 at 
the Collaborative Research Center 701 at Universit\"at Bielefeld,
and then at the Mathematische Institut der Universit\"at zu K\"oln.
He would like to thank C.~M.~Ringel and S.~K\"onig for invitation and advice.

\smallskip
{\it Dedication.} 
The author would like to use the occasion given by the graduation of his first Ph.D.\ student
to thank W.~Zimmermann, the advisor of his 1996 dissertation, for all those skills and
attitudes which have served as foundation for the author's research and for 
his work with students.

\section{Klein tableaux} \label{section-prototype}

In this section we review the following combinatorial isomorphism invariants
for short exact sequences:
\begin{itemize}
\item Partition triples,
\item Littlewood-Richardson tableaux, and
\item Klein tableaux.
\end{itemize}

\begin{notation}
Let $R$ be a commutative principal ideal domain, $p$ a generator 
of a maximal ideal and $k=R/(p)$ the residue field.
A $p$-{\it module} is a finite length $R$-module which is 
annihilated by some power of $p$.

\smallskip
We denote by $\mathcal S$ the category of all short exact sequences 
$$E:\quad 0\to A\to B\to C\to 0$$
of $p$-modules, with morphisms given by commutative diagrams. This category
is equivalent to the category of embeddings $E:(A\subset B)$
of $p$-modules, with morphisms given by commutative squares.
The symbol $\mathcal S$ denotes either one of those
categories.
For natural numbers $\ell,n$, let $\mathcal S_\ell$ and $\mathcal S(n)$ be
the full subcategories of $\mathcal S$ of all embeddings $(A\subset B)$ 
which satisfy the conditions $p^\ell A=0$ and $p^n B=0$, respectively.

\subsection{The partition triple}\label{section-partition-triple}

We denote the indecomposable $p$-module of composition length $m$ by $P^m=R/(p^m)$.
It is well known that arbitrary $p$-modules are given by partitions:
\end{notation}

\begin{proposition}\label{proposition-partitions}
There is a one-to-one correspondence
$$   \big\{\text{$p$-modules}\big\}{\big/}_{\textstyle\cong}
     \quad\stackrel{1-1}{\longleftrightarrow}\quad
     \big\{\text{partitions}\big\} . $$
The partition $\beta=(\beta_1,\ldots,\beta_s)$ corresponds to the $p$-module 
$M(\beta)=\bigoplus_{i=1}^sP^{\beta_i}$.
Conversely, given a $p$-module $B$, its {\it type}
$\beta=\type(B)$ is obtained via the formula
$$\beta'_i=\dim_k{\displaystyle\frac{p^{i-1}B}{p^iB}}\quad\text{for $i\in\mathbb N$}$$
where $\beta'$ is the conjugate of $\beta$.

The multiplicity of $P^m$ in an indecomposable decomposition of $B$ is
$$\mu_{P^m}(B)  \;=\;  \#\{ \; i \;|\; \beta_i=m\; \}  \; =\;  \beta'_m-\beta'_{m+1}.$$
  \qed
\end{proposition}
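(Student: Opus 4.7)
The statement is the classical structure theorem for finite length $p$-modules over a discrete valuation ring (or a PID localized at a prime), together with explicit invariants. The plan is to establish the bijection via existence and uniqueness of the cyclic decomposition, and then to read off both the type formula and the multiplicity formula by a direct dimension count on each summand.

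First, for existence, I would invoke the structure theorem for finitely generated modules over the principal ideal domain $R$: any $p$-module $B$, being of finite length and annihilated by some power of $p$, decomposes as $B \cong \bigoplus_{j=1}^s P^{m_j}$ with $m_1\geq m_2\geq\cdots\geq m_s$. The exponents $(m_1,\ldots,m_s)$ form a partition $\beta$, giving the map from isomorphism classes to partitions. Conversely, every partition $\beta$ evidently yields the module $M(\beta)=\bigoplus_i P^{\beta_i}$, so the map is surjective.

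Next, for uniqueness (injectivity), I would prove the type formula. The key computation is local to each indecomposable summand: for $P^m=R/(p^m)$, the subquotient $p^{i-1}P^m/p^i P^m$ is one-dimensional over $k$ when $1\leq i\leq m$ and zero otherwise. Since taking powers of $p$ and forming quotients commutes with direct sums, for $B=\bigoplus_{j=1}^s P^{\beta_j}$ one has
$$\dim_k\frac{p^{i-1}B}{p^i B} \;=\; \#\{\,j : \beta_j\geq i\,\} \;=\;\beta'_i,$$
by the standard definition of the conjugate partition. This simultaneously shows the formula in the statement and that the partition $\beta$ is recovered from $B$, so non-isomorphic partitions give non-isomorphic modules; combined with existence this yields the bijection.

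Finally, the multiplicity formula is a one-line consequence: the number of indices $i$ with $\beta_i=m$ equals $\#\{i:\beta_i\geq m\}-\#\{i:\beta_i\geq m+1\}=\beta'_m-\beta'_{m+1}$, while on the module side this counts the copies of $P^m$ in the decomposition. There is no real obstacle here; the only substantive input is the structure theorem, and the rest is bookkeeping with conjugate partitions. If one prefers a self-contained argument avoiding a black-box appeal, the mildest wrinkle is proving existence of the cyclic decomposition by induction on $\len B$ (picking an element of maximal order and splitting off the cyclic submodule it generates), but uniqueness always reduces cleanly to the dimension count above.
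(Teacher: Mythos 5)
Your argument is correct; it is the standard proof of the classical structure theorem for finite length modules over a PID localized at a prime, with the dimension count $\dim_k(p^{i-1}B/p^iB)=\#\{j:\beta_j\geq i\}=\beta'_i$ establishing uniqueness. The paper itself gives no proof (it cites the statement as well known and ends it with a checkmark), so there is nothing to contrast with; your write-up simply supplies the routine details the paper omits.
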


We picture $P^m$ as a column of $m$ boxes since the parts of a partition 
will be given by the lengths of the columns in its diagram.

\begin{definition}
Given a short exact sequence $E: 0\to A\to B\to C\to 0$ of $p$-modules, 
the {\it partition triple} consists of the three partitions
$$\big(  \type(A),\; \type(B),\; \type(C) \big).$$
\end{definition}

Clearly, the partition triple forms an isomorphism invariant for the objects 
in $\mathcal S$.

\begin{example}
In the embedding
$$T_2^{4,2}\;=\;
      \big(A\subset B\big) \;=\; \big( ((p^2,p))\subset P^4\oplus P^2\big),$$
the submodule $A$ is cyclic of exponent $2$, so $\alpha=\type(A)=(2)$ and
$\beta=\type(B)=(4,2)$.  
Note that the factor $B/A$ is not annihilated by $p^2$, 
hence it has type $\gamma=\type(B/A)=(3,1)$;
thus, the bipicket $T_2^{4,2}$ has partition triple $((2),(4,2),(3,1))$ or $(2,42,31)$ for
short.

\smallskip
In general, 
the partition triple for the picket $P_\ell^m$ is $(\ell,m,m-\ell)$, while the 
partition triple for the bipicket $T_2^{m,r}$ is $((2),(m,r),(m-1,r-1))$. 
\end{example}

\subsection{Littlewood-Richardson tableaux}

According to theorems by Green
and Klein \cite[Section~4]{klein}, a triple of partitions $(\alpha,\beta,\gamma)$ 
can be realized as the partition triple
of some embedding $E\in\mathcal S$ if and only if there is an LR-tableau $\Gamma$ 
of type $(\alpha,\beta,\gamma)$.

\begin{definition}
A weakly increasing sequence of partitions $\Gamma=[\gamma^0,\ldots,\gamma^e]$ 
forms a {\it Littlewood-Richardson tableau (LR-tableau)}
provided the following conditions hold:
\begin{enumerate}
\item For each $1\leq \ell\leq e$, the skew tableau 
$\gamma^\ell\backslash\gamma^{\ell-1}$
forms a {\it horizontal stripe,} that is, 
$\gamma^\ell_i-\gamma^{\ell-1}_i\leq 1$ holds for each $i$.
\item The {\it lattice permutation property} is satisfied, that is, we have for each
 $2\leq \ell\leq e$ and each $k\geq 0$:
$$\sum_{i\geq k} \big(\gamma^\ell_i-\gamma^{\ell-1}_i\big)\;
    \leq \; \sum_{i\geq k} \big(\gamma^{\ell-1}_i-\gamma^{\ell-2}_i\big).$$
\end{enumerate}
Let $\alpha$ be the conjugate of the partition defined by the lengths 
of the horizontal stripes, that is, $\alpha'_\ell=\sum_i(\gamma^\ell_i-\gamma^{\ell-1}_i)$.
Let $\beta=\gamma^e$ and $\gamma=\gamma^0$.  Then we say that the LR-tableau has
{\it type} $(\alpha,\beta,\gamma)$.
\end{definition}

\smallskip
The following observation is immediate:

\begin{lemma}\label{lemma-observation-LR}
Let $e\geq 2$.  
A weakly increasing sequence $\Gamma=[\gamma^0,\ldots,\gamma^e]$ of partitions
has the LR-property if and only if 
each  restriction $\Gamma|_2^\ell=[\gamma^{\ell-2},\gamma^{\ell-1},\gamma^\ell]$
where $2\leq \ell \leq e$ does.
                                   \qed
\end{lemma}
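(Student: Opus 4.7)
The plan is to verify that the two defining conditions of an LR-tableau (the horizontal stripe condition and the lattice permutation property) are both purely ``local'' in the sense that each instance involves only two or three consecutive partitions of the sequence, and then match each instance with the corresponding condition in some restriction $\Gamma|_2^\ell$.

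For the ``only if'' direction, I would simply observe that if $\Gamma$ has the LR-property, then for any fixed $\ell$ with $2\leq\ell\leq e$, the three partitions $\gamma^{\ell-2},\gamma^{\ell-1},\gamma^\ell$ inherit the horizontal stripe conditions at steps $\ell-1$ and $\ell$ directly, and the lattice permutation inequality at $\ell$ is precisely the single nontrivial lattice permutation inequality required of the three-term sequence $\Gamma|_2^\ell$. Hence $\Gamma|_2^\ell$ is itself an LR-tableau.

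For the ``if'' direction, I would verify the two conditions for $\Gamma$ separately. For the horizontal stripe condition at step $\ell$ with $1\leq\ell\leq e$: if $\ell\geq 2$, the condition $\gamma^\ell_i-\gamma^{\ell-1}_i\leq 1$ is exactly the top horizontal stripe inequality in the restriction $\Gamma|_2^\ell$; if $\ell=1$, I use the hypothesis $e\geq 2$ and read off the inequality $\gamma^1_i-\gamma^0_i\leq 1$ from the bottom horizontal stripe of $\Gamma|_2^2$. For the lattice permutation property at step $\ell$ with $2\leq\ell\leq e$, the required inequality
$$\sum_{i\geq k}(\gamma^\ell_i-\gamma^{\ell-1}_i)\;\leq\;\sum_{i\geq k}(\gamma^{\ell-1}_i-\gamma^{\ell-2}_i)$$
is exactly the unique lattice permutation inequality in $\Gamma|_2^\ell$.

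There is no real obstacle here; the only thing to watch is the boundary case $\ell=1$ of the horizontal stripe condition, which is where the hypothesis $e\geq 2$ is used (so that at least one restriction $\Gamma|_2^\ell$ exists and covers the bottom step). Once this is handled, the lemma follows by direct comparison of the two lists of inequalities.
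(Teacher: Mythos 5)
Your proof is correct and is exactly the direct verification the paper has in mind (the paper marks this lemma as immediate and omits the argument). You correctly identify both defining conditions as local and handle the boundary case $\ell=1$ via the bottom stripe of $\Gamma|_2^2$, which is indeed where the hypothesis $e\geq 2$ enters.
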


We picture the tableau $\Gamma$ as the diagram $\beta=\gamma^e$ 
in which for each $\ell\geq 1$ the horizontal stripe
$\gamma^\ell\backslash\gamma^{\ell-1}$ is filled with boxes $\singlebox \ell$.

\begin{example}
The LR-sequence $\Gamma=[21,321,332,432]$ and its restriction 
$\Gamma|_2^3=[321,332,432]$ have the following LR-tableaux.
$$
\setcounter{boxsize}{3}
\beginpicture\setcoordinatesystem units <1.1cm,1.1cm>
\put {} at 0 1.5
\put {} at 7 .5
\put{$\Gamma:$} at .5 1
\put{\begin{picture}(9,12)\put(0,7){\boxes31}\put(0,0){\numbox 3}
                          \put(3,7){\boxes33}
                          \put(6,9){\boxes23}
     \end{picture}} at 2 1
\put{$\Gamma|_2^3:$} at 4.5 1
\put{\begin{picture}(9,12)\put(0,6){\boxes42}
                          \put(3,7){\boxes31}
                          \put(6,9){\boxes21}
     \end{picture}} at 6 1
\endpicture
$$
\end{example}

\subsection{The LR-tableau of an embedding}\label{subsection-LR}

For an embedding $E:(A\subset B)$ of $p$-modules, the corresponding LR-tableau 
is obtained as follows.
Let  $(\alpha,\beta,\gamma)$ be the partition triple for $E$ and let
$e=\alpha_1$ be the exponent of $A$.
The chain of inclusions
$$0=p^eA\;\subset\;p^{e-1}A\;\subset\;\cdots\;\subset\;p^0A\;=\;A$$
yields a chain of epimorphisms
$$B=B/p^eA\;\onto\; B/p^{e-1}A\;\onto\;\cdots\;\onto\;B/p^0A\;=\;B/A$$
and hence a weakly decreasing sequence of partitions
$$\beta=\gamma^e\;\geq\;\gamma^{e-1}\;\geq\;\cdots\;\geq\;\gamma^0=\gamma$$
where $\gamma^i=\type(B/p^iA)$.  Then $\Gamma=[\gamma^0,\ldots,\gamma^e]$ 
is the LR-tableau for $E$ \cite[Theorem~4.1]{klein}.

\smallskip
The LR-tableau $\Gamma$ is an isomorphism invariant for $E$ refining the 
partition triple; in fact, the type of $\Gamma$ is the partition triple 
$(\alpha,\beta,\gamma)$ for $E$.  In \cite{lr} we give an interpretation for each entry
in $\Gamma$ in terms of the direct sum decomposition of the subfactors
$$E|_1^\ell\;=\;\big(p^{\ell-1}A/p^\ell A \;\subset\; B/p^\ell A\big)\;\in\mathcal S_1,$$
where $1\leq \ell\leq e$, 
and in terms of homomorphisms in the category $\mathcal S$.

\begin{example}
The LR-tableau for the picket $P_\ell^m$ is easily computed as $[m-\ell,m-\ell+1,\ldots,m]$.

\smallskip
We compute the LR-tableau for the bipicket $T_2^{42}:(A\subset B)$ from
example (\ref{section-partition-triple});
for this we need the types of the factors $B/p^\ell A$. 
From the partition triple we read off that $\type(B/A)=31$.
Since 
$$(pA\subset B)=\big(((p^3,0))\subset P^4\oplus P^2\big)$$
we obtain that $\type(B/pA)=32$.  Clearly, $\type(B/p^2A)=\type(B)=42$.
So the LR-tableau 
is $\Gamma=[31, 32, 42]$, as pictured below.

\smallskip
We note that both $P_2^4\oplus P_0^3\oplus P_1^2$ and $T_2^{42}\oplus P_1^3$ have
the same LR-tableau $\Gamma'$ , while their Klein tableaux are different as we will see
in (\ref{section-entries-at-most-2}).
$$
\setcounter{boxsize}{3}
\beginpicture\setcoordinatesystem units <1.1cm,1.1cm>
\put {} at 0 0
\put {} at .5 .5
\put{$\Gamma:$} at 0 0
\put{\begin{picture}(9,12)\put(0,6){\boxes42}
                          \put(3,9){\boxes21}
     \end{picture}} at 1 0
\put{$\Gamma':$} at 3 0
\put{\begin{picture}(9,12)\put(0,6){\boxes42}
                          \put(3,7){\boxes31}
                          \put(6,9){\boxes21}
     \end{picture}} at 4 0
\endpicture
$$
\end{example}

\subsection{Klein tableaux}\label{section-Klein-tableaux}

In \cite[Section~1]{klein2} Klein introduces prototypes
(which we call Klein tableaux) as refinemenents
of LR-tableaux.  Here we use subscript functions for an efficient encoding of
the data in the tableau.

\begin{definition}
Let $\Gamma=[\gamma^0,\ldots,\gamma^e]$ be a weakly increasing sequence of 
partitions and let $2\leq \ell\leq e$. 
An $\ell$-{\it subscript function} is a map
$$\varphi^\ell:\gamma^\ell\backslash\gamma^{\ell-1}\;\longrightarrow\;\mathbb N,$$
defined on the set of boxes in the skew tableau $\gamma^\ell\backslash\gamma^{\ell-1}$ 
such that the following conditions are satisfied:
\begin{itemize}
\item[(i)] In each given row, the map $\varphi^\ell$ is weakly increasing.
\item[(ii)] If a box $b$ occurs in the $m$-th row, then $\varphi^\ell(b)< m$.
\item[(iii)] If a box $b$ lies in the $m$-th row,
  and the  box above $b$ is in $\gamma^{\ell-1}\backslash\gamma^{\ell-2}$, 
  then $\varphi^\ell(b)=m-1$.
\item[(iv)] There are at least $\#(\varphi^\ell)^{-1}(r)$ boxes in the $r$-th row of 
  $\gamma^{\ell-1}\backslash\gamma^{\ell-2}$.
\end{itemize}
\end{definition}

\smallskip
The data
$$\Pi=[\gamma^0,\ldots,\gamma^e; \varphi^2,\ldots,\varphi^e]$$
define a {\it Klein sequence} if $\Gamma=[\gamma^0,\ldots,\gamma^e]$ is an
LR-sequence, and if for each $2\leq\ell\leq e$ the map $\varphi^\ell$ is an 
$\ell$-subscript function.
We say that the Klein sequence {\it refines} the LR-sequence; we define 
its {\it type} to be the type of the LR-sequence.

\smallskip
The {\it Klein tableau} represents the data in the Klein sequence as follows.
In the LR-tableau $\Gamma$ replace each box $b$ with an entry $\ell\geq 2$ 
by the {\it symbol\/} $\singlebox{\ell_r}$ where $r=\varphi^\ell(b)$.
We call $\ell$ the {\it entry} or {\it label\/} of the box $b$, $r$ the {\it subscript\/},
and $\singlebox{\ell_r}$ the {\it symbol.} 
Usually we will omit  subscripts that are uniquely  determined.

\begin{remark}
We recall the following equivalent definition for a Klein tab\-leau 
from \cite{klein2}.  An LR-tableau where each entry $\ell\geq 2$
carries a subscript is a Klein tableau provided the following conditions are satisfied:
\begin{itemize}
  \item[(i)] In any row, the subscripts of the same entry weakly increase from left to right.
  \item[(ii)] The subscript of an entry $\ell\geq 2$ in row $m$ is at most $m-1$.
  \item[(iii)] Any entry $\ell$ occuring in the same column as an entry $\ell-1$
    must carry the subscript $m-1$ where $m$ is  the row of the entry $\ell$.
  \item[(iv)] The total number of symbols $\singlebox{\ell_r}$ cannot exceed the number
    of $\ell-1$'s in row $r$. 
\end{itemize}
\end{remark}

\smallskip
{\it Motivation.}
Let $2\leq \ell\leq s$.  
The lattice permutation property in the definition of an LR-tableau
makes sure that there is an injective map 
$$\psi^\ell:\; \big\{\text{boxes labelled $\ell$}\big\}
  \;\longrightarrow\; \big\{\text{boxes labelled $\ell-1$}\big\}$$
such that each $\psi^\ell(b)$ occurs in some row above $b$.
The Klein tableau encodes a normalized version of $\psi^\ell$, as follows.
The map $\varphi^\ell$ given by $\varphi^\ell(b)={\rm row}(\psi^\ell(b))$ 
will satisfy (ii) and (iv).
Given that $\varphi^\ell$ satisfies (ii) and (iv), then there exists
a possibly modified version of $\varphi^\ell$ which will satisfy in addition (iii). 
In order to make the map weakly increasing (i), compose it with a permutation
of boxes in the same row and with the same entry.
Note that (iii) will still be satisfied.

\begin{definition}
For $\Pi=[\gamma^0,\ldots,\gamma^e;\varphi^2,\ldots,\varphi^e]$ a Klein tableau
and for natural numbers $u\leq\ell\leq e$ define the {\it restrictions} 
$$\Pi|^\ell=[\gamma^0,\ldots,\gamma^\ell;\varphi^2,\ldots,\varphi^\ell]\;\text{and}\;
\Pi|^\ell_u=[\gamma^{\ell-u},\ldots,\gamma^\ell;\varphi^{\ell-u+2},\ldots,\varphi^\ell].$$
\end{definition}

The Klein tableau for the restriction $\Pi|^\ell_u$ is the skew tableau
of shape $\gamma^\ell\backslash\gamma^{\ell-u}$;
each of its entries is obtained from the corresponding entry in $\Pi$
by subtracting $\ell-u$.  The new entries 1 loose their subscript, 
while each remaining entry $x\geq 2$ in $\Pi|^\ell_u$ inherits its
subscript from the corresponding entry $x+\ell-u$ in $\Pi$.

\smallskip
We observe as in Lemma~\ref{lemma-observation-LR}:

\begin{lemma}
Suppose $[\gamma^0,\ldots,\gamma^e]$ is a weakly increasing sequence of partitions
where $e\geq 2$,
and there are maps $\varphi^\ell:\gamma^\ell\backslash\gamma^{\ell-1}\to \mathbb N$
for each $2\leq\ell\leq e$. Then the 
system $\Pi=[\gamma^0,\ldots,\gamma^e;\,\varphi^2,\ldots,\varphi^e]$ 
is a Klein tableau if and only if for each $2\leq \ell\leq e$ the 
restriction
$$\Pi|_2^\ell\;=\;[\gamma^{\ell-2},\gamma^{\ell-1},\gamma^\ell;\,\varphi^\ell]$$
is a Klein tableau.
                                  \qed
\end{lemma}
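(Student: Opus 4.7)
The plan is to verify that each of the defining conditions of a Klein tableau is detected by windows of three consecutive partitions in the sequence, and then to deduce the equivalence.

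The "only if" direction is immediate: if $\Pi=[\gamma^0,\ldots,\gamma^e;\,\varphi^2,\ldots,\varphi^e]$ is a Klein tableau, then every restriction $\Pi|_2^\ell$ inherits, by restriction, the LR-property and the conditions (i)--(iv) on the map $\varphi^\ell$.

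For the "if" direction, I first isolate the LR-property of $[\gamma^0,\ldots,\gamma^e]$.  By hypothesis, each $\Pi|_2^\ell$ is a Klein tableau, so in particular each $[\gamma^{\ell-2},\gamma^{\ell-1},\gamma^\ell]$ is an LR-tableau.  Applying Lemma~\ref{lemma-observation-LR}, the full sequence $[\gamma^0,\ldots,\gamma^e]$ then satisfies the LR-property.

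It remains to check that each $\varphi^\ell$ is an $\ell$-subscript function for the whole sequence $[\gamma^0,\ldots,\gamma^e]$.  The key observation is that every one of the conditions (i)--(iv) defining an $\ell$-subscript function refers only to the partitions $\gamma^{\ell-2}$, $\gamma^{\ell-1}$, $\gamma^\ell$: condition (i) on weak increase along rows and condition (ii) bounding $\varphi^\ell(b)<m$ depend only on the domain $\gamma^\ell\backslash\gamma^{\ell-1}$; condition (iii) involves the box directly above $b$ lying in $\gamma^{\ell-1}\backslash\gamma^{\ell-2}$; and condition (iv) counts boxes in the $r$-th row of $\gamma^{\ell-1}\backslash\gamma^{\ell-2}$.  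Each of these pieces of data is precisely what is recorded in $\Pi|_2^\ell$, so the assumption that $\Pi|_2^\ell$ is a Klein tableau delivers the conditions for $\varphi^\ell$ in $\Pi$ verbatim.

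There is no genuine obstacle here: the proof amounts to an inspection of the definition, with the one nontrivial input being the locality of the LR-property from Lemma~\ref{lemma-observation-LR}.  The rest is formal, and the lemma can be stated crisply by noting that the Klein conditions are, by their very formulation, conditions on consecutive triples of partitions. \qed
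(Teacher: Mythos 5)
Your proof is correct and follows exactly the route the paper intends: the paper gives this lemma without proof (marked \checkmark, noting only "we observe as in Lemma~\ref{lemma-observation-LR}"), because it is precisely the observation you spell out — the LR-property is local by Lemma~\ref{lemma-observation-LR}, and each of the four defining conditions on $\varphi^\ell$ refers only to the triple $\gamma^{\ell-2},\gamma^{\ell-1},\gamma^\ell$. Your write-up makes explicit what the paper leaves implicit, but the argument is the same.
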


\begin{example}
For the LR-tableau $\Gamma$ below, there is only one subscript function $\varphi^2$ because
of condition (iii).  However, there are two subscript functions $\varphi^3$ as the
entry 3 can have either subscript 2 or 3.
Hence there are the two Klein-tableaux, $\Pi$ and $\Pi'$, which refine $\Gamma$.

$$
\setcounter{boxsize}{3}
\beginpicture\setcoordinatesystem units <1cm,1cm>
\put {} at 0 .5
\put {} at 7 .5
\put{$\Gamma:$} at 0 0
\put{\begin{picture}(9,12)\put(0,7){\boxes31}\put(0,0){\numbox 3}
                          \put(3,7){\boxes33}
                          \put(6,9){\boxes23}
     \end{picture}} at 1 0
\put{$\Pi:$} at 2.5 0
\put{\begin{picture}(9,12)\put(0,7){\boxes31}\put(0,0){\numbox{3_2}}
                          \put(3,9){\boxes21}\put(3,3){\numbox{2_2}}
                          \put(6,10){\boxes11}\put(6,6){\numbox{2_1}}
     \end{picture}} at 3.5 0
\put{$\Pi':$} at 5 0
\put{\begin{picture}(9,12)\put(0,7){\boxes31}\put(0,0){\numbox {3_3}}
                          \put(3,9){\boxes21}\put(3,3){\numbox{2_2}}
                          \put(6,10){\boxes11}\put(6,6){\numbox{2_1}}
     \end{picture}} at 6 0
\endpicture
$$
\end{example}

\subsection{The Klein tableau of an embedding}\label{subsection-Klein}

Let $E:(A\subset B)$ be an embedding, say $E\in\mathcal S(n)$,
with LR-tableau $\Gamma=[\gamma^0,\ldots,\gamma^e]$.
The following partition sequence will  define the Klein tableau $\Pi=\Pi(E)$ 
corresponding to $E$ \cite[Theorem~2.3]{klein2}.

\smallskip
Given $\ell \geq 2$, the chain of submodules
\begin{eqnarray*}
  p^\ell A=p^\ell A+p(p^{\ell-2}A\cap p^{n-1}B) & \subset &
           p^\ell A+p(p^{\ell-2}A\cap p^{n-2}B) \\
          & \subset & \cdots \\
          & \subset & p^\ell A+p(p^{\ell-2}A\cap B)=p^{\ell-1}A
\end{eqnarray*}
yields a chain of epimorphisms
\begin{eqnarray*}
 \frac B{p^\ell A} = \frac B{p^\ell A+p(p^{\ell-2}A\cap p^{n-1}B)} & \onto &
                                \frac B{p^\ell A+p(p^{\ell-2}A\cap p^{n-2}B)} \\
                                & \onto & \cdots \\
                                & \onto & 
                                \frac B{p^\ell A+p(p^{\ell-2}A\cap B)} =
                                \frac B{p^{\ell-1}A}
\end{eqnarray*}
and hence a weakly decreasing chain of partitions
$$\gamma^\ell=\gamma^{\ell, n-1}\geq \gamma^{\ell,n-2} \geq \cdots \geq 
               \gamma^{\ell,0}=\gamma^{\ell-1}$$
where 
$$\gamma^{\ell,r} = 
       \type\left(\frac B{p^\ell A + p(p^{\ell-2}A\cap p^r B)}\right).$$

\smallskip
The Klein tableau $\Pi$ corresponding to $E$ will have symbols 
$\singlebox{\ell_r}$
in the skew tableau $\gamma^{\ell,r}\backslash\gamma^{\ell,r-1}$, 
for $2\leq \ell$ and $1\leq r<n$, 
and symbols $\singlebox 1$ in the skew tableau $\gamma^1\backslash\gamma^0$.

\smallskip
Equivalently, the Klein sequence corresponding to $E$ is given by the LR-sequence 
$\Gamma$ and the $\ell$-subscript functions $\varphi^\ell$ which are defined via
$$\varphi^\ell(b) = 
        r \quad \text{if}\; 
                    b\in\gamma^{\ell,r}\backslash\gamma^{\ell,r-1}\;
	\qquad ( 1\leq r<n, 2\leq \ell).$$

\begin{example}
The Klein tableau for a picket $P_\ell^m$ is determined by condition (iii):
The subscript of an entry $\ell\geq 2$ in row $m$ is $m-1$.

\smallskip
We compute the Klein tableau for the bipicket $T_2^{4,2}$ considered in the examples
in (\ref{section-partition-triple}) and in (\ref{subsection-LR}) where
we have seen that the LR-tableau $\Gamma$ for $T_2^{4,2}$ is as pictured below.

\smallskip
For the Klein tableau it remains to determine the subscript of the entry 2.
With $\ell=2$, we have $p^\ell A=0$ and $p^{\ell-2}A=A$, so the chain of
epimorphisms simplifies as
$$B=\frac B{p^2A}\onto \frac B{p(A\cap p^2B)}\stackrel{(*)}{\onto}
    \frac B{p(A\cap pB)}\onto \frac B{pA}.$$
As $A\subset pB$ but $A\not\subset p^2B$, the map labelled $(*)$ is the only proper
epimorphism and the partitions representing the above modules are $42,42,32,32$.
Thus, the subscript is $r=2$ and the symbol $\singlebox{2_2}$.
$$
\setcounter{boxsize}{3}
\beginpicture\setcoordinatesystem units <1.1cm,1.1cm>
\put {} at 1 1.5
\put {} at 6 .5
\put{$\Gamma:$} at 1 1
\put{\begin{picture}(3,0)\lboxes42\end{picture}} at 2 1
\put{$\Pi:$} at 5 1
\put{\begin{picture}(3,0)\pboxes42\end{picture}} at 6 1
\endpicture
$$
In this case, the subscript $r=2$ of the entry $\ell=2$ is uniquely determined 
by the tableau as the row of the corresponding entry $1$. 
Hence the subscript can be omitted.
\end{example}

\section{The Category $\mathcal S_2$}\label{section-categoryS2}

As a full exact subcategory of $\mathcal S$, 
the category $\mathcal S_2$ of all
embeddings $(A\subset B)$ where $A$ is $p^2$-bounded is 
itself an exact Krull-Remak-Schmidt category, so every object has a unique
direct sum decomposition into indecomposables. 
The indecomposable objects have been determined in \cite{bhw}, they are either
pickets or bipickets. 
We show that arbitrary objects
in $\mathcal S_2$ are determined uniquely, up to isomorphism, by their
Klein tableaux.  
We also compute the Auslander-Reiten quiver for $\mathcal S_2$.
It turns out that each indecomposable object is the starting term
of a source map; however, some indecomposable objects do not occur as
end terms of sink maps.

\subsection{Pickets and Bipickets}

Pickets and bipickets are  introduced for $R$-modules as for finite abelian groups.
It turns out that
each indecomposable object in $\mathcal S_2$ is either a picket or a bipicket
\cite[Theorem~7.5]{bhw}:

\begin{theorem}\label{ps}
The category $\mathcal S_2$ is an exact Krull-Remak-Schmidt category.
The indecomposable objects, up to isomorphism, are as follows.
$$\ind\mathcal S_2\;=\;
           \big\{ P_\ell^m \big| \ell\leq\min\{2,m\}\big\} \;\cup\;
           \big\{ T_2^{m,r} \big| 1\leq r\leq m-2\big\}.$$
\end{theorem}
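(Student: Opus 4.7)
The theorem has two parts which I would treat separately. For the Krull-Remak-Schmidt assertion, I would observe that every object $E = (A \subset B) \in \mathcal S_2$ has $B$ of finite $R$-length, so $\End_{\mathcal S_2}(E)$ embeds as a subring of the Artinian ring $\End_R(B)$. Hence every endomorphism ring in $\mathcal S_2$ is Artinian; Fitting's lemma together with the standard argument for finite-length categories then gives the Krull-Remak-Schmidt property.

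For the classification, I would first verify that the listed objects are indecomposable by computing their endomorphism rings. Pickets are easy: every $R$-endomorphism of $R/(p^m)$ automatically preserves the submodule $p^{m-\ell}R/(p^m)$, so $\End P_\ell^m \cong R/(p^m)$, which is local. For a bipicket $T_2^{m,r}$ with $1\leq r \leq m-2$, I would write endomorphisms as $2\times 2$ matrices on $R/(p^m)\oplus R/(p^r)$ and impose that the generator $(p^{m-2},p^{r-1})$ of $A$ is mapped to an $R/(p^2)$-scalar multiple of itself. A short calculation forces the diagonal entries to be congruent mod $p$, and the ring modulo the ideal of matrices whose $(1,1)$-entry lies in $(p)$ is the residue field, so $\End$ is local.

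To show that the list is complete, my strategy would be to fix a decomposition $B = \bigoplus_j B_j$ into cyclic summands and reduce an arbitrary $(A\subset B)\in\mathcal S_2$, via automorphisms of $B$, to a standard form. Pick a cyclic summand $\langle a\rangle$ of $A$ of maximal exponent. If $pa=0$, choose $j_0$ with $a_{j_0}\neq 0$ and $\len B_{j_0}$ minimal among the support of $a$; a basis change in $B$ absorbs the other components of $a$ into $B_{j_0}$, placing $\langle a\rangle$ inside $B_{j_0}$ and splitting off a copy of $P_1^{\len B_{j_0}}$. A parallel splitting at the socle level shows the remaining socle-part of $A$ sits in $\bigoplus_{j\neq j_0}B_j$, so iterating exhibits a decomposition unless $A$ is cyclic of order $p$. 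If instead $\langle a\rangle\cong R/(p^2)$, an analogous normalization (one summand $B_1$ of length $m$ contributing the order-$p^2$ coefficient $p^{m-2}$, possibly a second summand $B_2$ of length $r$ contributing the order-$p$ coefficient $p^{r-1}$) places $a$ in at most two $B_j$, giving $P_2^m$ or $T_2^{m,r}$; the constraint $r\leq m-2$ appears because for $r\geq m-1$ a further basis change collapses the configuration to $P_2^m$. The general case with $A$ not cyclic is handled by induction on $\len A$, extracting one cyclic summand of $A$ at a time and checking that the split-off piece is one of the indecomposables just described.

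The main obstacle is the basis-change arithmetic inside $B$: which summand is allowed to absorb the other components of a chosen generator of $A$ is forced by length comparisons, the candidate automorphism of $B$ must respect the annihilators of every generator simultaneously, and one must verify that the inductive splitting leaves a residual embedding whose submodule really lies in a complementary direct summand of $B$. Making these compatibilities work in the mixed case, where $A$ has both order-$p$ and order-$p^2$ summands, is where the combinatorics becomes delicate; in particular, it is precisely the failure of the absorption trick when $r\leq m-2$ that pins down the parameter range in the bipicket family.
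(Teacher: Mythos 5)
The paper does not actually prove this theorem; it quotes Theorem~7.5 of Beers, Hunter and Walker (``Finite valuated $p$-groups''), so there is no in-paper argument to compare yours against. Your Krull--Remak--Schmidt reasoning is fine: $\End_{\mathcal S}(E)$ sits inside the Artinian ring $\End_R(B)$, and Fitting's lemma then gives local endomorphism rings for indecomposables and the Krull--Schmidt property since idempotents split in $\mathcal S_2$. The indecomposability checks are also sound. For $T_2^{m,r}$ your $2\times 2$ matrix calculation forces the diagonal entries to agree modulo $p$ precisely because $r\le m-2$ makes the term $cp^{m-2}$ vanish in $R/(p^r)$; the quotient of $\End_{\mathcal S}T_2^{m,r}$ by the ideal $\{\,a\in(p)\,\}$ is $k$, and every element outside that ideal is invertible, so the ring is local. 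The same computation with $r=m-1$ yields the weaker congruence $a\equiv c+d\pmod p$ and admits the nontrivial idempotent $\phi(x,y)=(x,\bar x)$, which is exactly why that range is excluded.

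The gap is in the completeness step. Once you have normalised a cyclic generator $a$ of $A$ of maximal exponent to the form $(p^{m-2},p^{r-1})\in B_1\oplus B_2$, the pair $(\langle a\rangle\subset B_1\oplus B_2)$ is not yet visibly a direct summand of $(A\subset B)$ in $\mathcal S_2$: the remaining generators of $A$ may still have nonzero components in $B_1$ and $B_2$. To split one must simultaneously (i) replace those generators by representatives modulo $\langle a\rangle$ with the smallest possible $B_1,B_2$-components, and (ii) apply a further automorphism of $B$ that fixes $\langle a\rangle$ and moves the residual components into $\bigoplus_{j>2}B_j$, while checking that the automorphisms used here do not disturb the normalisation already obtained for $a$. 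This is exactly the ``delicate combinatorics'' you flag at the end without carrying it out; as it stands the induction on $\len A$ does not close, and this is the substantive content of the classification that your sketch leaves open. A smaller inaccuracy: for $r=m-1$ the absorption collapses $((p^{m-2},p^{m-2}))\subset R/(p^m)\oplus R/(p^{m-1})$ to $P_2^m\oplus P_0^{m-1}$, not to $P_2^m$ alone; the leftover $P_0$ summand has to be tracked when multiplicities matter.
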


\medskip
The Klein tableaux 
of those objects can be computed as in (\ref{subsection-Klein}):

\begin{center}
\setcounter{boxsize}{3}
  \begin{picture}(110,24)
    \put(0,12){$P_0^m:$}
    \put(10,9){\begin{picture}(3,12)\put(0,0)\smbox \put(0,6)\vdotbox
        \put(3,3){$\left.\makebox(0,5){}\right\}{\!}_m$}
      \end{picture}}
    \put(30,12){$P_1^m:$}
    \put(40,8){\begin{picture}(3,15)\put(0,3){\boxes21}\put(0,9)\vdotbox
        \put(3,5){$\left.\makebox(0,6){}\right\}{\!}_m$}
      \end{picture}}
    \put(60,12){$P_2^m:$}
    \put(70,6){\begin{picture}(3,18)\put(0,4){\boxes33}\put(0,12)\vdotbox
        \put(3,6){$\left.\makebox(0,8){}\right\}{\!}_m$}
      \end{picture}}
    \put(87,12){$T_2^{m,r}:$}
    \put(105,0){\begin{picture}(6,24)
        \put(-6,11){${}_{m\!\!}\left\{\makebox(0,13){}\right.$}
        \put(0,3){\boxes22}\put(0,9)\vdotbox
        \put(0,15){\boxes20}\put(3,15){\boxes21}\put(0,21)\vdotbox
        \put(3,21)\vdotbox\put(6,17){$\left.\makebox(0,6){}\right\}{\!}_r$}
      \end{picture}}
  \end{picture}
\end{center}

\subsection{Klein tableaux with entries at most 2}\label{section-entries-at-most-2}

In this section we show that there is a one-to-one correspondence 
between Klein tableaux with entries at most 2 
and isomorphism types of objects in $\mathcal S_2$.

\begin{proposition}\label{proposition-KRS-multiplicity}
There is a one-to-one correspondence
$$\begin{array}{ccc}
\big\{\text{objects in $\mathcal S_2$}\big\}{\big/}_{\textstyle\cong}  &
       \longleftrightarrow &
       \big\{\text{Klein tableaux with entries at most 2}\big\}, \\
   E  & \longmapsto & \Pi(E). \\
\end{array}
$$
Given an object $E\in\mathcal S_2$ with Klein tableau $\Pi$, 
the multiplicity $\mu_F(E)$ of an indecomposable object $F\in\mathcal S_2$
in a direct sum decomposition for $E$ is as follows:
$$\begin{array}{c | c}
  F & \mu_F(E) \\[.5ex] \hline
 T_2^{m,r\strut} &  \#\{\text{boxes $\singlebox{2_r}$ in row $m$}\}\qquad(r<m-1) \\[.5ex]
  P_2^m    &  
    \#\{\text{boxes $\singlebox{2_x}$ in row $m$}\} \qquad(x=m-1) \\[.5ex]
  P_1^m  & \#\{\text{boxes $\singlebox 1$ in row $m$}\} 
            -\#\{\text{boxes $\singlebox{2_m}$ in any row}\} \\[.5ex]
  P_0^m  & \#\{\text{empty col.\ of length $m$}\}
               + \#\{\text{boxes $\singlebox{}$ in row $m$ above $\singlebox{2_m}$}\}
\end{array}
$$

\end{proposition}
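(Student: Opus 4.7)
The plan is to combine the Krull-Remak-Schmidt property of $\mathcal S_2$ (Theorem~\ref{ps}) with the additivity of the Klein tableau under direct sums. First I would compute $\Pi(E_0)$ for each indecomposable $E_0$ using the recipe in Section~\ref{subsection-Klein}. For $P_0^m$ the partitions $\gamma^{\ell,r}$ are constant, so one gets a single empty column of length $m$. For $P_1^m$ and $P_2^m$ the single column of length $m$ picks up the label $\singlebox{1}$ (and for $P_2^m$ also $\singlebox{2_{m-1}}$, with the subscript forced by condition~(iii)) at the bottom. For $T_2^{m,r}$ with $r \leq m-2$ the computation proceeds as in the worked example of $T_2^{4,2}$ and produces two columns, of heights $m$ and $r$, carrying $\singlebox{2_r}$ and $\singlebox{1}$ at their bottoms respectively.

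Second, I would establish additivity: for $E = E' \oplus E''$, the submodules $p^\ell A + p(p^{\ell-2}A \cap p^r B)$ split along $B = B' \oplus B''$ (intersections and sums of direct summands of submodules respect the decomposition), and since the type of a $p$-module is additive under direct sums (in the sense of multiset-union of parts), $\gamma^{\ell,r}(E) = \gamma^{\ell,r}(E') \cup \gamma^{\ell,r}(E'')$. Consequently $\Pi(E)$ is obtained by juxtaposing the columns of $\Pi(E')$ and $\Pi(E'')$. Combined with the first step, the multiplicity formulas follow by inspection: each indecomposable summand of $E$ contributes a specific column pattern to $\Pi(E)$, and the counts in the statement merely tally these contributions. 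In particular the subtraction in $\mu_{P_1^m}$ appears because each bipicket $T_2^{m',m}$ with $m' > m$ contributes an additional $\singlebox{1}$ in row $m$ (from its short column) paired with a $\singlebox{2_m}$ further down, and its non-negativity is precisely condition~(iv) of the Klein tableau definition applied with $\ell = 2$, $r = m$.

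For the bijection, injectivity is immediate: equal Klein tableaux give equal multiplicities of each indecomposable, and hence isomorphic embeddings by Krull-Remak-Schmidt. For surjectivity, given any Klein tableau $\Pi$ with entries at most~2, the formulas supply non-negative integer multiplicities (the relevant inequalities coming from conditions (i)--(iv)), and the direct sum of the corresponding indecomposables has tableau $\Pi$ by additivity. I expect the main obstacle to be the additivity step---one must verify carefully that the defining submodules $p^\ell A + p(p^{\ell-2}A \cap p^r B)$ respect direct sum decompositions of embeddings---a routine but essential bookkeeping task on which the entire counting argument rests.
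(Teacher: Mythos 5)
The overall architecture of your proof (compute $\Pi$ for each indecomposable, prove additivity, recover multiplicities, invoke Krull--Remak--Schmidt) matches the paper's; the multiplicity table is the same, and your explanation of the subtraction in $\mu_{P_1^m}$ is correct. However, there is a genuine error in your additivity step, and it is exactly the subtle point that drives the odd-looking second term in the $\mu_{P_0^m}$ row.

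Your premise that the partitions $\gamma^{\ell,r}$ combine as multiset unions is right, but the conclusion ``$\Pi(E)$ is obtained by juxtaposing the columns of $\Pi(E')$ and $\Pi(E'')$'' does \emph{not} follow and is in fact false. What is true (and what the paper's Lemma~\ref{lemma-sum} asserts) is that for each \emph{row} $m$, the multiset of symbols occurring there is the union of the corresponding multisets for the summands; these symbols are then re-sorted lexicographically (empty box first) within the row. Consequently the column pattern contributed by one summand can be broken apart when it is put next to another. Concretely, $P_2^{m+1}$ alone has a single column with $\singlebox{1}$ in row $m$ and $\singlebox{2_m}$ in row $m+1$, but in the direct sum $P_2^{m+1}\oplus P_0^m$ the re-sorting of row $m$ moves that $\singlebox{1}$ to the shorter column, leaving a column of height $m+1$ with $\singlebox{}$ in row $m$ sitting above $\singlebox{2_m}$. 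Under your column-juxtaposition picture such a column cannot exist, so you would never be led to the second summand in the stated formula $\mu_{P_0^m}=\#\{\text{empty col.\ of length }m\}+\#\{\text{boxes }\singlebox{}\text{ in row }m\text{ above }\singlebox{2_m}\}$, and you would instead derive the wrong answer $\#\{\text{empty columns of length }m\}$. Your ``follows by inspection'' therefore covers the first three rows of the table but not the fourth.

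A smaller, independent point: your surjectivity argument (construct a candidate from the multiplicities, then claim by additivity its tableau is $\Pi$) is a different route from the paper's, which simply cites Klein's realization theorem (\cite[Theorem~2.4]{klein2}). Your route is legitimate in principle but requires a non-trivial consistency check — that the row-by-row symbol counts of the reconstructed sum actually reproduce $\Pi$, including that the empty boxes are exhausted correctly — and you have asserted it rather than verified it. Citing Klein sidesteps this entirely; if you prefer the direct construction, that verification needs to be supplied, and it is precisely where the corrected row-additivity (rather than column juxtaposition) must be invoked.
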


\smallskip
The corresponding result for arbitrary entries $\ell$ is
Corollary~\ref{corollary-multiplicity} where we give a module theoretic interpretation 
for the number of boxes $\singlebox{\ell_r}$ in row $m$.

\smallskip
For the proof of the proposition we use the following

\begin{lemma}\label{lemma-sum}
The Klein tableau corresponding to a direct sum
contains in each row all the symbols in lexicographical ordering
(with the empty symbol $\singlebox {}$ coming first) 
which occur in the corresponding rows in the tableaux of the summands.
\end{lemma}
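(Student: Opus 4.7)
The plan is to reduce the lemma to additivity of each symbol's multiplicity in each row under direct sums, and then invoke the standard drawing convention for Klein tableaux---entries weakly increasing from left to right by the LR property, subscripts on equal entries weakly increasing by Klein condition~(i), and empty positions (inherited from $\gamma^0$) preceding the labeled boxes---to obtain the claimed lexicographic arrangement.

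First I would recall from Section~\ref{subsection-Klein} that $\Pi(E)$ is determined by the partitions
$$\gamma^{\ell,r}(E) \;=\; \type\bigl(B/(p^\ell A + p(p^{\ell-2}A\cap p^r B))\bigr),$$
together with $\gamma^0=\type(B/A)$ and $\gamma^1=\type(B/pA)$. By the formula $\beta'_i=\dim_k p^{i-1}B/p^iB$ of Proposition~\ref{proposition-partitions}, the conjugate partition is additive under direct sums of $p$-modules. Therefore the number of boxes $\singlebox{\ell_r}$ in row $m$, which equals $(\gamma^{\ell,r})'_m-(\gamma^{\ell,r-1})'_m$, is additive under direct sums provided that the submodules $p^\ell A+p(p^{\ell-2}A\cap p^r B)$ decompose as direct sums, and similarly for the counts of $\singlebox{1}$ and the empty symbols $\singlebox{}$ in each row.

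The substantive step is to verify this submodule decomposition. Writing $E_i=(A_i\subset B_i)$, powers of $p$ act componentwise on $B=B_1\oplus B_2$, sums of summand-wise submodules are summand-wise, and---crucially---intersections of summand-wise submodules remain summand-wise. Thus
$$p^\ell A + p(p^{\ell-2}A\cap p^r B) \;=\; \bigoplus_{i=1}^2 \bigl(p^\ell A_i + p(p^{\ell-2}A_i\cap p^r B_i)\bigr),$$
so the quotient of $B$ by this submodule splits as a direct sum of the corresponding quotients for $E_1$ and $E_2$. Taking types and passing to conjugate partitions yields the additivity of $(\gamma^{\ell,r})'_m$, $(\gamma^0)'_m$, and $(\gamma^1)'_m$ under direct sums, so the multiplicity of each symbol in row $m$ of $\Pi(E_1\oplus E_2)$ is the sum of the multiplicities in row $m$ of $\Pi(E_1)$ and $\Pi(E_2)$. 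Combined with the drawing convention noted at the outset, this proves the lemma.

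I expect the one point requiring care to be the splitting of $p^{\ell-2}A\cap p^r B$: it rests on the fact that the inclusion $A\subset B$ itself splits as $A_1\oplus A_2\subset B_1\oplus B_2$, so an element $a=a_1+a_2$ with $a_i\in A_i$ lies in $p^{\ell-2}A$ iff each $a_i\in p^{\ell-2}A_i$ (by uniqueness of the decomposition $p^{\ell-2}A=p^{\ell-2}A_1\oplus p^{\ell-2}A_2$), and lies in $p^r B$ iff each $a_i\in p^r B_i$. This is routine but is the crucial ingredient that makes the whole direct-sum compatibility go through.
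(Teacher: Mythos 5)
Your proposal is correct and follows essentially the same route as the paper's proof: the paper observes that the functor $F^{\ell,r}:(A\subset B)\mapsto B/(p^\ell A+p(p^{\ell-2}A\cap p^r B))$ is additive and deduces additivity of the per-row symbol counts from additivity of conjugate partitions, while you simply spell out why $F^{\ell,r}$ commutes with direct sums by checking that the submodule $p^\ell A + p(p^{\ell-2}A\cap p^r B)$ splits summand-wise. Your explicit remark that intersections of summand-wise submodules stay summand-wise is exactly the content of the paper's appeal to additivity, and your final observation about the drawing convention is what the paper leaves implicit.
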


\begin{proof}
Suppose that $E=\bigoplus E_i$ is a direct sum of embeddings 
in $\mathcal S_2$ and that the Klein tableaux for $E$ and for the $E_i$ 
are represented by partition sequences
$\Pi(E)=(\gamma^{\ell,r})$ and $\Pi(E_i)=(\gamma^{\ell,r}(i))$.
Given $\ell$, $r$, we can recover $\gamma^{\ell,r}$ as the type
of $F^{\ell,r}(E)$ where $F^{\ell,r}$ is the functor
$$F^{\ell,r}: \; \mathcal S \to R\text{-mod}, \;
            (A\subset B) \mapsto \frac B{p^\ell A+p(p^{\ell-2}A\cap p^r B)}. 
$$
Since $F^{\ell,r}$ is additive, the $m$-th row 
of the skew diagram $\gamma^{\ell,r}\backslash\gamma^{\ell,r-1}$ has length
$$(\gamma^{\ell,r})'_m - (\gamma^{\ell,r-1})'_m  \quad=\quad
       \sum(\gamma^{\ell,r}(i)'_m - \gamma^{\ell, r-1}(i)'_m).$$
Thus the number of symbols $\singlebox{\ell_r}$ in the $m$-th row in 
the Klein tableau $\Pi(E)$  is obtained by summing up the corresponding numbers
in the $\Pi(E_i)$. 
\end{proof}

\begin{proof}[Proof of the Proposition]
For each indecomposable object in $\mathcal S_2$, we have computed
the Klein tableau, and the Klein tableau for the sum is given by 
Lemma~\ref{lemma-sum}.  Hence the map $E\mapsto \Pi(E)$ is defined. 
It is onto since each Klein tableau
can be realized by an object in $\mathcal S$ \cite[Theorem~2.4]{klein2};
clearly, such an object must be in $\mathcal S_2$. 

\smallskip
It remains to demonstrate for given $E\in \mathcal S_2$ that the multiplicities
of the indecomposable summands of $E$ can be recovered from the Klein tableau
$\Pi=\Pi(E)$. 

\smallskip
Let $1\leq r\leq m-2$.  
The multiplicity of $T_2^{m,r}$ as a direct summand of $E$ 
equals the number of boxes $\singlebox{2_r}$ in row $m$.
Namely, $T_2^{m,r}$ is the only indecomposable object in $\mathcal S_2$ which has
this symbol in the given row in its Klein tableau. 

\smallskip
The multiplicity of the picket $P_2^{m}$ equals the number of boxes 
$\singlebox{2_x}$ in row $m$ where $x=m-1$.

\smallskip
The multiplicity of pickets of type $P_1^m$ 
is given by the number of ``unused'' 
boxes $\singlebox{1}$ in row $m$.  This number is the total number of 
such boxes, minus the number of symbols $\singlebox{2_m}$ in $\Pi$.

\smallskip
Finally, we deal with pickets of type $P_0^m$.  
Together they need to contribute
to $\Pi$ all the empty boxes which have not been obtained otherwise.
Given $m$, one summand $P_0^m$ has to be taken 
for each empty column of length $m$,
and also for each column of length $m+1$ 
which has only a symbol $\singlebox{2_m}$ in row $m+1$. 
(Such columns arise in direct sums like $P_2^{m+1}\oplus P_0^m$.)
\end{proof}

\begin{example}
We have seen in (\ref{subsection-LR}) that both $E:T_2^{42}\oplus P_1^3$
and $E':P_2^4\oplus P_0^3\oplus P_1^2$ have the same LR-tableau $\Gamma$.
The Klein tableaux of the indecomposable summands have been determined 
in (\ref{subsection-Klein}). Lemma~\ref{lemma-sum} yields the Klein tableaux
$\Pi=\Pi(E)$ and $\Pi'=\Pi(E')$ of the sums:
$$
\setcounter{boxsize}{3}
\beginpicture\setcoordinatesystem units <1cm,1cm>
\put {} at 0 .5
\put {} at 7 .5
\put{$\Gamma':$} at 0 0
\put{\begin{picture}(9,12)\put(0,6){\boxes42}
                          \put(3,7){\boxes31}
                          \put(6,9){\boxes21}
     \end{picture}} at 1 0
\put{$\Pi:$} at 2.5 0
\put{\begin{picture}(9,12)\put(0,7){\boxes30}\put(0,0){\numbox{2_2}}
                          \put(3,7){\boxes31}
                          \put(6,9){\boxes21}
     \end{picture}} at 3.5 0
\put{$\Pi':$} at 5 0
\put{\begin{picture}(9,12)\put(0,7){\boxes30}\put(0,0){\numbox {2_3}}
                          \put(3,7){\boxes31}
                          \put(6,9){\boxes21}
     \end{picture}} at 6 0
\endpicture
$$
Conversely, using Proposition~\ref{proposition-KRS-multiplicity} we can retrieve the direct
sum decompositions from the Klein tableaux.
\end{example}

\subsection{Auslander-Reiten sequences in $\mathcal S_2$}
\label{section-AR-sequences}

Let $\mathcal S_2(n) = \mathcal S_2\cap \mathcal S(n)$ be the full subcategory
of $\mathcal S$ of all pairs $(A\subset B)$ which satisfy $p^2A=0$ and $p^nB=0$.
For each $n$, the category $\mathcal S_2(n)$ has 
Auslander-Reiten sequences \cite{bounded}.  
It is the aim of this section to describe the Auslander-Reiten theory for $\mathcal S_2$.

\smallskip
Let $\mathcal E$ be an Auslander-Reiten sequence in $\mathcal S_2(n)$ with modules in 
$\mathcal S_2(n-1)$,
and let $n'\geq n$.  It follows from the description of the Auslander-Reiten quivers in 
\cite{bounded} that $\mathcal E$ is also an Auslander-Reiten sequence 
in $\mathcal S_2(n')$.  Hence, such a sequence $\mathcal E$ 
is an Auslander-Reiten sequence even in $\mathcal S_2$. 

\smallskip
More precisely, each indecomposable module in $\mathcal S_2$ has a source map
in $\mathcal S_2$, and each indecomposable object not of type $P_1^m$
has a sink map in $\mathcal S_2$.
The pickets of the form $P_1^m$ for $m\geq1$ are end terms 
of Auslander-Reiten sequences in each
of the categories $\mathcal S_2(n)$ for $n\geq m$, but those sequences 
depend on $n$.
We label those objects with $\times$ since they are neither projective, nor do they
occur as end terms of sink maps in $\mathcal S_2$. 
Finally, the module $P_2^2$ is an ($\Ext$-) projective object 
in  each of the categories $\mathcal S_2(n)$,
hence also in $\mathcal S_2$.

\smallskip 
Here is the partial Auslander-Reiten quiver for $\mathcal S_2$;
we picture each object by its Klein tableau:


$$
\beginpicture\setcoordinatesystem units <1.15cm,1.15cm>
\setcounter{boxsize}{3}
\put {} at 0 6.5
\put {} at 0 -.8
\put{\begin{picture}(3,0)\boxes11\end{picture}} at 0 6
\put{$\times$} at -.3 6
\put{\begin{picture}(3,0)\boxes21\end{picture}} at 1 4
\put{$\times$} at .7 4
\put{\begin{picture}(3,0)\boxes10\end{picture}} at 2 6
\put{\begin{picture}(3,0)\boxes23\end{picture}} at 2 4.5
\put{\begin{picture}(3,0)\boxes31\end{picture}} at 2 3
\put{$\times$} at 1.7 3
\put{\begin{picture}(3,0)\lboxes31\end{picture}} at 3 4
\put{\begin{picture}(3,0)\boxes41\end{picture}} at 3 2
\put{$\times$} at 2.7 2
\put{\begin{picture}(3,0)\boxes33\end{picture}} at 4 6
\put{\begin{picture}(3,0)\boxes20\end{picture}} at 4 4.5
\put{{\bf (*)}} at 4 4
\put{\begin{picture}(3,0)\lboxes41\end{picture}} at 4 3
\put{\begin{picture}(3,0)\boxes51\end{picture}} at 4 1
\put{$\times$} at 3.7 1
\put{\begin{picture}(3,0)\lboxes42\end{picture}} at 5 4
\put{\begin{picture}(3,0)\lboxes51\end{picture}} at 5 2
\put{\begin{picture}(3,0)\boxes61\end{picture}} at 5 0
\put{$\times$} at 4.7 0
\put{\begin{picture}(3,0)\boxes30\end{picture}} at 6 6
\put{\begin{picture}(3,0)\boxes43\end{picture}} at 6 4.5
\put{\begin{picture}(3,0)\lboxes52\end{picture}} at 6 3
\put{\begin{picture}(3,0)\lboxes61\end{picture}} at 6 1
\put{\begin{picture}(3,0)\lboxes53\end{picture}} at 7 4
\put{\begin{picture}(3,0)\lboxes62\end{picture}} at 7 2
\put{\begin{picture}(3,0)\boxes53\end{picture}} at 8 6
\put{\begin{picture}(3,0)\boxes40\end{picture}} at 8 4.5
\put{\begin{picture}(3,0)\lboxes63\end{picture}} at 8 3
\put{\begin{picture}(3,0)\lboxes64\end{picture}} at 9 4
\put{\begin{picture}(3,0)\boxes63\end{picture}} at 10 4.5
\put{\begin{picture}(3,0)\boxes50\end{picture}} at 10 6
\arr{.3 5.4}{.7 4.6}
\arr{1.3 4.6}{1.7 5.4}
\arr{1.3 4.15}{1.7 4.35}
\arr{1.3 3.7}{1.7 3.3}
\arr{2.3 5.4}{2.7 4.6}
\arr{2.3 4.35}{2.7 4.15}
\arr{2.3 3.3}{2.7 3.7}
\arr{2.3 2.7}{2.7 2.3}
\arr{3.3 4.6}{3.7 5.4}
\arr{3.4 4.2}{3.7 4.35}
\arr{3.3 3.7}{3.7 3.3}
\arr{3.3 2.3}{3.7 2.7}
\arr{3.3 1.7}{3.7 1.3}
\arr{4.3 5.4}{4.7 4.6}
\arr{4.3 4.35}{4.7 4.15}
\arr{4.4 3.4}{4.7 3.7}
\arr{4.3 2.7}{4.7 2.3}
\arr{4.3 1.3}{4.7 1.7}
\arr{4.3 .7}{4.7 .3}
\arr{5.3 4.6}{5.7 5.4}
\arr{5.4 4.2}{5.7 4.35}
\arr{5.3 3.7}{5.7 3.3}
\arr{5.3 2.3}{5.7 2.7}
\arr{5.3 1.7}{5.7 1.3}
\arr{5.3 .3}{5.7 .7}
\arr{6.3 5.4}{6.7 4.6}
\arr{6.3 4.35}{6.7 4.15}
\arr{6.4 3.4}{6.7 3.7}
\arr{6.3 2.7}{6.7 2.3}
\arr{6.3 1.3}{6.7 1.7}
\arr{7.4 4.8}{7.7 5.4}
\arr{7.4 4.2}{7.7 4.35}
\arr{7.3 3.7}{7.7 3.3}
\arr{7.4 2.4}{7.7 2.7}
\arr{8.3 5.4}{8.7 4.6}
\arr{8.3 4.35}{8.7 4.15}
\arr{8.4 3.4}{8.7 3.7}
\arr{9.4 4.8}{9.7 5.4}
\arr{9.4 4.2}{9.7 4.35}
\setdots<2pt>
\plot 0.3 6  1.7 6 /
\plot 2.3 6  3.7 6 /
\plot 4.3 6  5.7 6 /
\plot 6.3 6  7.7 6 /
\plot 8.3 6  9.7 6 /
\plot 2.3 4.5  3.7 4.5 /
\plot 4.3 4.5  4.7 4.5 /
\plot 5.35 4.5  5.7 4.5 /
\plot 6.3 4.5  6.7 4.5 /
\plot 7.35 4.5  7.7 4.5 /
\plot 8.3 4.5  8.7 4.5 /
\plot 9.35 4.5  9.7 4.5 /
\multiput{$\ddots$} at 10 3  8 1 /
\endpicture
$$

In the diagram, the sequence ending at $T_2^{4,2}$ is labelled by $(*)$. 
We will visualize in (\ref{subsection-S5}), Example (2),
how this sequence determines all the indecomposable objects 
in the Auslander-Reiten quiver for $\mathcal S(5)$
which have a symbol $\singlebox{2_2}$ in the 4-th row of their Klein tableau.

\smallskip
For later use we record the sink maps ending at a picket of type $P_2^m$:
$$v_2^m \;:\;\left\{\begin{array}{r@{\to}l@{\quad\text{if}\quad}l}
         P_1^2 & P_2^2 & m=2 \\
         T_2^{m,m-2} & P_2^m & m\geq 3 \\
\end{array}\right.$$
and the sink maps ending at a bipicket of type $T_2^{m,r}$:
$$v_2^{m,r}\;:\;\left\{\begin{array}{r@{\to}l@{\quad\text{if}\quad}l}
        P_0^1\oplus P_2^2\oplus P_1^3 & T_2^{3,1} & m=3, r=1 \\
        P_0^{m-2}\oplus P_2^{m-1}\oplus T_2^{m,m-3} & T_2^{m,m-2} 
                                            & m\geq 4, r=m-2 \\
        P_1^m \oplus T_2^{m-1,1} & T_2^{m,1} & m\geq 4, r=1 \\
        T_2^{m,r-1}\oplus T_2^{m-1,r} & T_2^{m,r} & 2\leq r \leq m-3 \\
\end{array}\right.$$

To unify notation we also write $T_2^{m,m-1}=P_2^m$ and define 
$v_2^{m,m-1}=v_2^m$.

\section{Hall Polynomials}\label{section-Hall}

We assume throughout this section that the residue field 
$k=R/(p)$ is the finite field of $q$ elements.
The sum-product 
formula in \cite{klein2}, in conjunction with an application of the
orbit equation, yields the formula for the  Hall
polynomial in Theorem~\ref{theorem-Hall}.
In an example we illustrate how Klein tableaux control
the counting process.

\subsection{The action of the automorphism group of $B$}\label{subsection-orbit}

Let $(A\subset B)$ be an embedding in $\mathcal S$. The group $G=\Aut_RB$ acts on the set
$$\{(U\subset B)\in\mathcal S\;|\; U\subset B\}$$
via $\beta\cdot(U\subset B) = (\beta(U)\subset B)$.
The cardinality of the orbit of $(A\subset B)$ under this action is the 
Hall multiplicity of  the embedding
$$g(A\subset B) \;=\; \#\big\{ U\subset B \;\big|\; 
          (U\subset B)\cong (A\subset B) \;\text{in}\;\mathcal S\big\}$$
while the stabilizer of $(A\subset B)$ is the automorphism group $\Aut_{\mathcal S}(A\subset B)$.
From the orbit formula we obtain
$$g(A\subset B) \;=\; \frac{\#\Aut_RB}{\#\Aut_{\mathcal S}(A\subset B)}.$$

\subsection{Klein's sum-product formula}

We deduce Theorem~\ref{theorem-Hall} by applying the above to a sum-product formula
which is implicitely in Klein's original article on the computation 
of Hall polynomials \cite{klein2}.
We first exhibit the formula and then use (\ref{subsection-orbit}) to prove
Theorem~\ref{theorem-Hall}.

\smallskip
{\it Notation.} Let $\Pi$ be a Klein tableau of type $(\alpha,\beta,\gamma)$.
The Hall multiplicity of $\Pi$ in $R_p$-mod is denoted by
$$g(\Pi;q)\;=\; \#\big\{\;U\subset M(\beta)\;\big|\;\Pi((U\subset M(\beta))) = \Pi\;\big\}.$$

\begin{proposition}\label{proposition-Klein-Hall}
For partitions $\alpha,\beta,\gamma$ and $e=\alpha_1$, the Hall polynomial in $R_p$-mod
can be computed as 
$$g_{\alpha\gamma}^\beta(q)\;=\; \sum_\Pi \; \prod_{\ell=2}^{e+1}
        \;  \frac{ g(\Pi|_2^\ell;q)}{ g(\Pi|_1^\ell;q)}$$
where the sum is taken over all Klein tableaux of type $(\alpha,\beta,\gamma)$. 
\end{proposition}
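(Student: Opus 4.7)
The plan is to partition the Hall count by Klein tableau and then establish the resulting per-tableau identity by a layered counting argument.

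Since the Klein tableau is an isomorphism invariant of $\mathcal S$ refining the partition triple, grouping subgroups $U\subset M(\beta)$ of type $\alpha$ with cokernel of type $\gamma$ according to the Klein tableau of $(U\subset M(\beta))$ yields
$$g_{\alpha\gamma}^\beta(q)\;=\;\sum_\Pi g(\Pi;q),$$
where the sum is over all Klein tableaux of type $(\alpha,\beta,\gamma)$. It therefore suffices to prove, for each such $\Pi$ of exponent $e$, the identity
$$g(\Pi;q)\;=\;\prod_{\ell=2}^{e+1}\frac{g(\Pi|_2^\ell;q)}{g(\Pi|_1^\ell;q)}.\qquad(\ast)$$

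For $(\ast)$, the strategy is to count each $U$ by reconstructing it layer by layer from its $p$-filtration $U\supset pU\supset\cdots\supset p^{e-1}U\supset 0$. For $2\leq\ell\leq e+1$, associate to $U$ the $p^2$-bounded subfactor
$$\tilde V_\ell(U)\;=\;p^{\ell-2}U/p^\ell U\;\subset\;B/p^\ell U,$$
whose $p$-socle $p\tilde V_\ell(U)=p^{\ell-1}U/p^\ell U$ and overgroup $\tilde V_\ell(U)$ are the first terms of the subfactor sequences $E|_1^\ell$ and $E|_2^\ell$ of the exact sequence $E$ determined by $U$. By Proposition~\ref{proposition-KRS-multiplicity} (and the Krull--Schmidt classification of $\mathcal S_1$ by LR-tableaux) the isomorphism classes of these subfactors are determined respectively by the restrictions $\Pi|_1^\ell$ and $\Pi|_2^\ell$; moreover the projection $B/p^\ell U\onto B/p^{\ell-1}U$ sends $\tilde V_\ell(U)$ onto $p\tilde V_{\ell-1}(U)$ (for $\ell\geq 3$), so the family $\bigl(\tilde V_\ell(U)\bigr)_{\ell=2}^{e+1}$ is a compatible chain from which $U$ can be recovered inductively.

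The count then proceeds layer by layer. At $\ell=e+1$, for which $p^\ell U=0$ and $B/p^\ell U=B$, the choice of $\tilde V_{e+1}=p^{e-1}U\subset B$ is unconstrained among $p$-bounded subgroups with Klein tableau $\Pi|_2^{e+1}$ and contributes the $\ell=e+1$ factor of the product (noting $g(\Pi|_1^{e+1};q)=1$ since the corresponding submodule is zero). At each subsequent $\ell=e,e-1,\ldots,2$, the previous choices fix the ambient $B/p^\ell U$ and force the image $p\tilde V_\ell$; the number of lifts $\tilde V_\ell$ with Klein tableau $\Pi|_2^\ell$ is
$$\frac{g(\Pi|_2^\ell;q)}{g(\Pi|_1^\ell;q)},$$
by applying the orbit formula of (\ref{subsection-orbit}) to the transitive action of $\Aut_R(B/p^\ell U)$ on the subgroups with a given Klein tableau in $\mathcal S_1$ or $\mathcal S_2$. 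Multiplying these contributions over $\ell$ produces $(\ast)$.

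The delicate step, and the principal obstacle, is verifying that the lift count at each level is genuinely independent of the finer data of the partial chain already built---a subtlety that arises for $e\geq 3$, where $(U\subset B)$ is no longer determined up to isomorphism by $\Pi$. I expect this independence to follow from the explicit description of the partitions $\gamma^{\ell,r}$ in Section~\ref{subsection-Klein}, together with repeated application of the modular law in the lattice of submodules of $B$.
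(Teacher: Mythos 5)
Your proposal takes a genuinely different route from the paper.  The paper's proof is essentially a \emph{translation}: it quotes Klein's own layered sum-product formula \cite[Corollaries 1--3, p.~77]{klein2}, which expresses $g^\beta_{\alpha\gamma}$ as a sum over Klein tableaux of products of auxiliary polynomials $F(\Pi|_2^\ell;\frac{1}{q})$, quotes \cite[Theorem~3.7]{klein2} to identify (after distributing a normalizing power of $q$) each such factor with the number $h(\Pi|_2^\ell;q)$ of subgroups $A\subset B$ having Klein tableau $\Pi|_2^\ell$ and with $pA$ equal to a fixed elementary subgroup of tableau $\Pi|_1^\ell$, and only then uses the orbit argument of (\ref{subsection-orbit}) to rewrite $h(\Pi|_2^\ell;q)$ as $g(\Pi|_2^\ell;q)/g(\Pi|_1^\ell;q)$.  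You instead aim to re-derive Klein's layered count from scratch by building $U$ level by level; if carried through this gives a self-contained derivation that bypasses the $F$-polynomial calculus entirely (though you still need realizability of Klein tableaux, \cite[Theorem~2.4]{klein2}, to know the fibres in your orbit argument are nonempty).

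Your closing worry about ``independence of the finer data'' is, however, already resolved by the orbit argument you invoke, and does not require the modular-law computation you anticipate.  At step $\ell$ the relevant datum is the pair $(W\subset B_\ell)$ with $B_\ell=B/p^\ell U$ and $W=p^{\ell-1}U/p^\ell U$, and the number of $V\subset B_\ell$ with $pV=W$ and tableau $\Pi|_2^\ell$ is a function of the isomorphism class of this pair alone: since $\mathcal S_1$-objects with a given LR-tableau are isomorphic, $\Aut_R(B_\ell)$ acts transitively on the set of $W'\subset B_\ell$ of tableau $\Pi|_1^\ell$, and the $p$-multiplication map from subgroups of tableau $\Pi|_2^\ell$ to those of tableau $\Pi|_1^\ell$ is equivariant and surjective, so every fibre has cardinality $g(\Pi|_2^\ell;q)/g(\Pi|_1^\ell;q)$.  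Moreover the tableau constraint at step $\ell+1$ forces $\type(B/p^{\ell-1}U)=\gamma^{\ell-1}$ and $\type(B/p^\ell U)=\gamma^\ell$, so the pair $(W\subset B_\ell)$ always has tableau $\Pi|_1^\ell$ regardless of which valid chain was built above; the per-step count is therefore constant and the product formula follows.  What you do still need to spell out is the bijection between subgroups $U$ of tableau $\Pi$ and compatible chains $(\tilde V_\ell)_{\ell=2}^{e+1}$, in particular that a chain satisfying the local constraint at each level reassembles to a $U$ whose global Klein tableau is $\Pi$ (this uses that $\Pi$ is determined by its restrictions $\Pi|_2^\ell$, $2\le\ell\le e$, since these capture all the $\gamma^\ell$ and $\varphi^\ell$).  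These verifications are routine but must be stated for the argument to be complete.
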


\begin{proof}
According to \cite[Corollaries 1-3, p.~77]{klein2},
the Hall polynomial is computed as 
$$(*)\qquad g_{\alpha\gamma}^\beta(q) \;=\; q^{\|\beta\|-\|\alpha\|-\|\gamma\|}
           \sum_{\Pi}\prod_{\ell=2}^{e+1} F(\Pi|^\ell_2;\textstyle\frac1q),$$
where the sum is over all Klein tableaux of type $(\alpha,\beta,\gamma)$.  Here,
$F(\Pi,t)$ is a polynomial in $t$
given explicitely in \cite[(3.5)]{klein2}.
The moment of a partition $\mu$ is given as 
$\|\mu\|=\sum_{i\geq 1}{\mu_i' \choose 2}$, so in particular
$\|\alpha\|=\sum_\ell\|(1^{\alpha_\ell'})\|$, and we have
$$q^{\|\beta\|-\|\gamma\|-\|\alpha\|} \;=\; 
   \prod_{\ell\geq 1} q^{\|\gamma^\ell\|-\|\gamma^{\ell-1}\|-\|(1^{\alpha_\ell'})\|} \;=\;
   \prod_{\ell\geq2} q^{\|\gamma^{\ell-1}\|-\|\gamma^{\ell-2}\|-\|(1^{\alpha_{\ell-1}'})\|}.$$
Hence  the first factor in $(*)$ can be distributed over the factors in the product.
Let $U\subset B$ be an elementary subgroup such that the embedding $(U\subset B)$
has Klein tableau $\Pi|^\ell_1$. 
According to \cite[Theorem~3.7]{klein2} the number of subgroups $A$ in $B$
which have Klein tableau  $\Pi|_2^\ell$ and for which $pA=U$ holds is computed as
$$h(\Pi|_2^\ell,q)\;=\;
     q^{\|\gamma^{\ell-1}\|-\|\gamma^{\ell-2}\|-\|(1^v)\|} F(\Pi|_2^\ell,\textstyle\frac1q)$$
where $v=|\gamma^{\ell-1}\backslash\gamma^{\ell-2}|$. 
Since the number of elementary subgroups $U$ in $B$ of tableau $\Pi|^\ell_1$
is $g(\Pi|^\ell_1,q)$, and since all such embeddings $(U\subset B)$ are isomorphic
in $\mathcal S$, we can write $h(\Pi|_2^\ell;q)$ as a quotient of two Hall multiplicities,
as needed:
$$h(\Pi|_2^\ell,q) \;=\; \frac{g(\Pi|_2^\ell,q)}{g(\Pi|^\ell_1,q)}.$$
\end{proof}

\smallskip
We deduce the formula in Theorem~\ref{theorem-Hall}:

\begin{proof}[Proof of Theorem~\ref{theorem-Hall}]
The key point is that each Klein tableau with entries
at most 2 determines a unique isomorphism class of short exact sequences, so that 
we can apply the orbit formula in (\ref{subsection-orbit}).
In particular, we obtain for the embedding 
$E(\Pi|^\ell_2;q)=(A\subset B)$ corresponding to the Klein tableau $\Pi|^\ell_2$:
$$g(\Pi|^\ell_2;q)\;=\; g((A\subset B)) \;=\; \frac{\#\Aut_RB}{\#\Aut_{\mathcal S}(A\subset B)}$$
Hence we have
$$\frac{g(\Pi^\ell_2;q)}{g(\Pi^\ell_1;q)}\;=\; 
        \frac{\#\Aut_{\mathcal S}(pA\subset B)}{\#\Aut_{\mathcal S}(A\subset B)}
        \;=\; \frac{\#\Aut_{\mathcal S}E(\Pi|^\ell_1;q)}{\#\Aut_{\mathcal S}E(\Pi|^\ell_2;q)}.$$
It follows from Proposition~\ref{proposition-Klein-Hall} that 
$$g_{\alpha\gamma}^\beta (q) \;=\; 
   \sum_\Pi \prod_{\ell=2}^{e+1} \frac{\#\Aut_{\mathcal S} E(\Pi|^\ell_1;q)}{%
                                     \#\Aut_{\mathcal S} E(\Pi|^\ell_2;q)}.$$
\end{proof}

\subsection{Remarks}

\begin{enumerate}
\item Consider the corresponding sum-product formula for 
LR-tab\-leaux from \cite[II, (4.2) and (4.9)]{macdonald}:
$$g_{\alpha\gamma}^\beta(q)\;=\; 
     \sum_\Gamma \prod_{\ell=2}^{e+1} h(\Gamma|_2^\ell;q),$$
where the sum is over all LR-tableaux $\Gamma$ of type 
$(\alpha,\beta,\gamma)$. 
By $\Gamma|_i^\ell$ we denote the restriction of the tableau 
to the entries $\ell,\ell-1,\ldots, \ell-i$, as for Klein tableaux.
Again, the factors can be written as 
$$h(\Gamma|_2^\ell;q) \;=\; 
     \frac{g(\Gamma|_2^\ell;q)}{g(\Gamma|_1^\ell;q)},$$
where $g(\Gamma|_1^\ell;q)=g(\Pi|_1^\ell;q)$ counts elementary embeddings. 
However, the $p^2$-bounded embeddings counted by 
$g(\Gamma|_2^\ell;q)$ are not necessarily isomorphic.

\item For any LR-tableau $\Gamma$, the formula 
$g(\Gamma;q)=\sum_\Pi g(\Pi;q)$
holds where the sum is over all Klein tableaux $\Pi$ refining $\Gamma$.  
Note that 
each of the polynomials $g(\Gamma;q)$, $g(\Pi;q)$ is monic, so there is a unique
Klein tableau $\Pi_D$,
called in \cite{klein2} the {\it dominant Klein tableau corresponding to} $\Gamma$, which
refines $\Gamma$ and which is such that $g(\Pi_D;q)$ 
has the same degree as $g(\Gamma;q)$, and hence the same degree as the
Hall polynomial.

\item In the above sum-product formula 
$g_{\alpha\gamma}^\beta(q)=
        \sum_\Pi\prod_{\ell=2}^{e+1}h(\Pi|_2^\ell;q)$, 
the factors are indexed by uniquely determined isomorphism classes of
$p^2$-bounded embeddings.  
Note that one cannot expect a refined version of this formula where
the factors are described by uniquely determined isomorphism types of 
$p^3$-bounded embeddings since 
parametrized families occur already in the case where $B$ has exponent 7
\cite{bounded}.
\end{enumerate}

\subsection{Example}

In order to demonstrate that the formula in Theorem~\ref{theorem-Hall} is 
computationally feasible, we determine the Hall polynomial $g_{321,21}^{432}$.

\smallskip
The following lemma will provide us with the size of some homomorphism and 
automorphism groups.

\begin{lemma}
\begin{enumerate}
\item $\#\End_{\mathcal S} T_2^{m,r} = q^{m+3r-1}$, \newline
      $\#\Aut_{\mathcal S} T_2^{m,r} = (1-\frac1q) q^{m+3r-1}$
\item $\#\Hom_{\mathcal S} (P_u^v,P_\ell^m) = q^{\min\{m,v-\max\{0,u-\ell\}\}}$, \newline
      $\#\Aut_{\mathcal S} P_\ell^m = (1-\frac1q) q^m$
\item $\#\Hom_{\mathcal S} (T_2^{v,w},P_\ell^m) =
       \left\{\begin{array}{ll}  q^{\min\{v-1,m\}+\min\{w-1,m\}}, & \text{if}\; \ell=0 \\
                                 q^{\min\{v-1,m\}+\min\{w,m\}}, & \text{if}\; \ell=1 \\
                                 q^{\min\{v,m\}+\min\{w,m\}}, & \text{if}\; \ell\geq 2
              \end{array}\right.$
\item $\#\Hom_{\mathcal S} (P_u^v, T_2^{m,r}) =
       \left\{\begin{array}{ll}  q^{\min\{v,r\}+\min\{v,m\}}, & \text{if}\; u=0\\
                                 q^{\min\{v-1,r\}+\min\{v,m\}}, & \text{if}\; u=1\\
                                 q^{\min\{v-u+1,r\}+\min\{v-u+1,m\}}, & \text{if}\; u\geq 2
              \end{array}\right.$
\end{enumerate}
\end{lemma}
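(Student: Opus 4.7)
The plan is to compute each of the four cardinalities by direct inspection, using that a morphism in $\mathcal{S}$ between embeddings $(A\subset B)$ and $(A'\subset B')$ is a module homomorphism $g\colon B\to B'$ satisfying $g(A)\subset A'$. When the source or target has cyclic underlying module, such maps are parametrized by a single scalar, reducing the count to bookkeeping of divisibility exponents in $R/(p^k)$; when both are bipickets, the parametrization uses a $2\times2$ matrix of scalars.

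First I would treat identity (2). A map $P_u^v\to P_\ell^m$ is multiplication by a scalar $\lambda\in R/(p^m)$ subject to $p^v\lambda=0$ (annihilator constraint) and $\lambda\cdot p^{v-u}\in(p^{m-\ell})$ (preservation of the subobject). Combining these gives $\lambda\in(p^e)$ with $e=\max\{0,\,m-v,\,m-\ell-v+u\}=\max\{0,\,m-v+\max\{0,u-\ell\}\}$, so the set of valid $\lambda$ has cardinality $q^{m-e}=q^{\min\{m,\,v-\max\{0,u-\ell\}\}}$. Automorphisms of $P_\ell^m$ correspond to units of $R/(p^m)$, giving $(1-\tfrac1q)q^m$.

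Next I would turn to the mixed identities (3) and (4). A map involving a bipicket on one side decomposes along the two cyclic summands into a pair of scalars $(\lambda_1,\lambda_2)$; the annihilator conditions and the requirement that the generator of the subobject map into the subobject of the target yield divisibility constraints whose exponents depend on which ``layer'' of the cyclic source or target the subobject generator lies in. The case splits $\ell\in\{0,1,\geq2\}$ in (3) and $u\in\{0,1,\geq2\}$ in (4) correspond exactly to whether the relevant generator lies in $p^2B$, in $pB\setminus p^2B$, or is primitive; in each case the count is a short exponent calculation.

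Finally, for identity (1), I would count endomorphisms of $B=R/(p^m)\oplus R/(p^r)$ as $2\times2$ matrices of scalars, whose entries satisfy the appropriate annihilator relations and together contribute $q^{m+3r}$ module endomorphisms of $B$. The preservation requirement that the generator $v=(p^{m-2},p^{r-1})$ of $A$ lie in $Rv$ translates, after working modulo the appropriate powers of $p$, into a single independent linear congruence between the diagonal entries (essentially $a\equiv d\pmod p$), which cuts the count by exactly $q$, yielding $q^{m+3r-1}$. Since $T_2^{m,r}$ is indecomposable, $\End_{\mathcal S}T_2^{m,r}$ is a local ring with residue field $k$, and the units form a fraction $(1-\tfrac1q)$ of the endomorphisms, giving the claimed automorphism count. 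I expect the main obstacle to be precisely this last calculation: pinning down that the preservation-of-$A$ condition imposes exactly one independent congruence (not zero and not two) requires careful tracking of the truncations $a\bmod p^2$, $b\bmod p$, and $d\bmod p$ and verification that exactly one degree of freedom is eaten; the mixed-Hom case splits are routine but demand that every boundary case (small $u,\ell,r$) be checked against the unifying convention $T_2^{m,m-1}=P_2^m$.
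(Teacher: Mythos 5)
Your plan is correct in substance, but it is genuinely different from the paper's proof. The paper does not parametrize homomorphisms directly. Instead, it proves (2) and (3) from the identification $\Hom_{\mathcal S}\big((A\subset B),\,P_\ell^m\big)\cong\Hom_R\big(B/p^\ell A,\,P^m\big)$, which reduces both to counting $R$-module homomorphisms between cyclic modules once one knows the type of $B/p^\ell A$ (this is why the three cases $\ell=0,1,\geq2$ appear: they are the three possible values of $\type(B/p^\ell A)$ for a bipicket). It then obtains (1) and (4) as instances of Corollary~\ref{corollary-hom2bipicket}, which expresses $\len\Hom_{\mathcal S}(E,T_2^{m,r})$ as a signed combination of lengths of Hom-spaces into pickets plus a correction term $b$ read off from the Klein tableau of $E$; that corollary is itself derived from Theorem~\ref{theorem-prototypes} and the Auslander-Reiten structure of $\mathcal S_2$. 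So the paper's computation is ``top--down'': it reuses the categorical machinery developed earlier, at the cost of an implicit dependence (note that evaluating the right-hand side of Corollary~\ref{corollary-hom2bipicket} for $E=T_2^{m,r}$ or $E=P_u^v$ already requires parts (2) and (3), so the logical order is (2),(3) first, then (1),(4)).

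Your approach, by contrast, is ``bottom--up'': scalars for picket-to-picket maps, pairs of scalars for the mixed cases, and a $2\times2$ matrix for $\End T_2^{m,r}$, with the preservation-of-the-subobject condition translated into divisibility constraints. This is elementary and self-contained, and buys you independence from Theorem~\ref{theorem-prototypes}; the price is the exponent bookkeeping you anticipate. Your outline of the critical step in (1) is sound: writing $\phi=\left(\begin{smallmatrix}a&b\\c&d\end{smallmatrix}\right)$ with $b\in(p^{m-r})$, the condition that the generator $(p^{m-2},p^{r-1})$ of the subobject map into its own $R$-span collapses to the single congruence $a\equiv d\pmod p$, cutting $q^{m+3r}$ down by a factor $q$. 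One small refinement to your last remark: rather than appeal to ``indecomposable, hence local ring with residue field $k$'' (a priori the residue field could be a proper extension of $k$), it is cleaner to observe that the explicit matrix description shows $\phi$ is an automorphism iff $\bar a\ (=\bar d)\neq0$ in $k$, so the units are cut out by a single nonvanishing condition over $k$ and the fraction $(1-\tfrac1q)$ follows directly. With that point tightened, your plan carries through, though you should still carry out the divisibility analysis for (3) and (4) in each of the three cases as you indicate.
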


\begin{proof}
Corollary~\ref{corollary-hom2bipicket} yields the results stated in (1) and in (4).
For (2) and (3), use the equality
$$\Hom_{\mathcal S}( \,(A\subset B), P_\ell^m\,)\;=\; \Hom_R(\,B/p^\ell A, P^m\,)$$
discussed in \cite{lr} and after Corollary~\ref{corollary-hom2bipicket}.
\end{proof}

\begin{example}
For $\alpha=(321)$, $\beta=(432)$ and $\gamma=(21)$ as in (\ref{section-Klein-tableaux}) 
we compute the Hall polynomial
$g_{\alpha,\gamma}^\beta$ using the formula in Theorem~\ref{theorem-Hall}.
There are three Klein tableaux of type $(\alpha,\beta,\gamma)$:

$$
\setcounter{boxsize}{3}
\beginpicture\setcoordinatesystem units <1cm,1cm>
\put {} at 0 .5
\put {} at 7 .5
\put{$\Pi_1:$} at -1 0
\put{\begin{picture}(9,12)\put(0,6){\boxes43}
                          \put(3,9){\boxes21}\put(3,3){\numbox 3}
                          \put(6,9){\boxes23}
     \end{picture}} at 0 0
\put{$\Pi=\Pi_2:$} at 2 0
\put{\begin{picture}(9,12)\put(0,7){\boxes31}\put(0,0){\numbox{3_2}}
                          \put(3,7){\boxes33}
                          \put(6,9){\boxes23}
     \end{picture}} at 3.5 0
\put{$\Pi_3:$} at 5 0
\put{\begin{picture}(9,12)\put(0,7){\boxes31}\put(0,0){\numbox {3_3}}
                          \put(3,7){\boxes33}
                          \put(6,9){\boxes23}
     \end{picture}} at 6 0
\endpicture
$$

The sum in the formula is indexed by the Klein tableaux of the given type:
$$ g_{\alpha,\gamma}^\beta \;=\; g(\Pi_1) + g(\Pi_2) + g(\Pi_3)$$
We put $\Pi=\Pi_2$ and compute $g(\Pi)$ first:
$$g(\Pi) = \frac{ \#\Aut\mathcal E(\Pi|^2_1) }{ \#\Aut\mathcal E(\Pi|^2_2) } \cdot
           \frac{ \#\Aut\mathcal E(\Pi|^3_1) }{ \#\Aut\mathcal E(\Pi|^3_2) } \cdot
           \frac{ \#\Aut\mathcal E(\Pi|^4_1) }{ \#\Aut\mathcal E(\Pi|^4_2) }$$
The restrictions of $\Pi$,
$$
\setcounter{boxsize}{3}
\beginpicture\setcoordinatesystem units <1cm,1cm>
\put {} at 0 .5
\put {} at 7 .5
\put{$\Pi|_2^2:$} at 0 0
\put{\begin{picture}(9,12)\put(0,7){\boxes31}
                          \put(3,7){\boxes33}
                          \put(6,9){\boxes23}
     \end{picture}} at 1 0
\put{$\Pi|_1^2:$} at 2.5 0
\put{\begin{picture}(9,12)\put(0,7){\boxes30}
                          \put(3,7){\boxes31}
                          \put(6,9){\boxes21}
     \end{picture}} at 3.5 0
\put{$\Pi|_2^3:$} at 5 0
\put{\begin{picture}(9,12)\put(0,7){\boxes30}\put(0,0){\numbox {2_2}}
                          \put(3,7){\boxes31}
                          \put(6,9){\boxes21}
     \end{picture}} at 6 0
\put{$\Pi|_1^3=\Pi|_2^4:$} at 7.8 0
\put{\begin{picture}(9,12)\put(0,6){\boxes41}
                          \put(3,7){\boxes30}
                          \put(6,9){\boxes20}
     \end{picture}} at 9.4 0
\put{$\Pi|_1^4:$} at 10.55 0
\put{\begin{picture}(9,12)\put(0,6){\boxes40}
                          \put(3,7){\boxes30}
                          \put(6,9){\boxes20}
     \end{picture}} at 11.5 0
\endpicture
$$
define the following direct sum decompositions of the short exact sequences
corresponding to the tableaux (Proposition~\ref{proposition-KRS-multiplicity}):
\begin{eqnarray*}g(\Pi) & = & 
  \frac{\#\Aut(P_0^3\oplus P_1^3\oplus P_1^2) }{ \#\Aut(P_1^3\oplus P_2^3\oplus P_2^2) } 
  \;\cdot\;
  \frac{\#\Aut(P_1^4\oplus P_0^3\oplus P_0^2) }{ \#\Aut(T_2^{42}\oplus P_1^3) } \\
  &  & \qquad \;\cdot\;
  \frac{\#\Aut(P_0^4\oplus P_0^3\oplus P_0^2) }{ \#\Aut(P_1^4\oplus P_0^3 \oplus P_0^2)}
  \end{eqnarray*}

For the first factor on the right hand side, note that the modules in the numerator and 
in the denominator correspond to each other under the functors $\uparrow$, $\downarrow$
defined in (\ref{subsection-lifting}).  The adjointness isomorphism in
Lemma~\ref{lemma-adjoint} yields an isomorphism between the automorphism groups.
Hence the first factor is 1.

Also the third factor is 1 since the total spaces of the direct sums in numerator and
denominator agree while the subgroups are invariant under automorphisms.

We determine the second factor using the lemma above. 
\begin{eqnarray*}
\#\Aut_{\mathcal S}(P_1^4\oplus P_0^3\oplus P_0^2) & = & 
       (1-\textstyle\frac1q)^3 \;q^{23}\\
\#\Aut_{\mathcal S}(T_2^{42}\oplus P_1^3) & = & 
      (1-\textstyle\frac1q)^2\; q^{22}
\end{eqnarray*}

Hence $g(\Pi_2)=q-1$. The computation of $g(\Pi_1)$ and $g(\Pi_3)$ turns out to 
be even less involved since no bipickets occur.  The numbers are $g(\Pi_1)=q^2=g(\Pi_3)$.

\smallskip
In conclusion, $g_{321,21}^{432}(q)=g(\Pi_1)+g(\Pi_2)+g(\Pi_3)=2q^2+q-1$.
\end{example}

\section{Homomorphisms in $\mathcal S$}\label{section-categorification}

In this section we interpret the entries in the Klein tableau in terms of
homomorphisms in the category $\mathcal S$.

\subsection{Lifting and Reducing}\label{subsection-lifting}

We consider the following two endofunctors on $\mathcal S$:

\begin{eqnarray*}
\up: & (A\subset B)\longmapsto & 
              (p^{-1}A\subset B) \qquad \text{``lifting''}\\
\down: & (A\subset B)\longmapsto & 
              (pA\subset B) \qquad \text{``reducing''}
\end{eqnarray*}

Here $p^{-1}A$ denotes the subgroup $\{b\in B\,|\,pb\in A\}$. 
For $s$ a nonnegative integer, $\up^s$, $\down_s$ 
are $s$ iterations of $\up$ and $\down$, respectively.

\smallskip
The following corollary is an immediate consequence of the formulas
$p^{-1}pA = A+\soc B$, $pp^{-1}A = A\cap \rad B$, $pp^{-1}pA=pA$, and
$p^{-1}pp^{-1}A = p^{-1}A$.

\begin{corollary}\label{corollary-updownup}
Let $(A\subset B)\in \mathcal S$.  Then $A\subset \rad B$ if and only if
$(A\subset B)\up\down= (A\subset B)$.  Also, $\soc B\subset A$ if and 
only if $(A\subset B)\down\up=(A\subset B)$.  Moreover,
$$(A\subset B)\up\down\up = (A\subset B)\up \quad\text{and}\quad
   (A\subset B)\down\up\down = (A\subset B) \down.$$\qed
\end{corollary}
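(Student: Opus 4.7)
My plan is to reduce the entire corollary to the four identities listed in the statement, and to first verify those identities by direct elementwise arguments inside $B$. Once the identities are in hand, every claim in the corollary is a one-line consequence.

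For the four preliminary identities, I would argue as follows. For $p^{-1}pA = A+\soc B$: if $b\in p^{-1}pA$ then $pb=pa$ for some $a\in A$, so $b-a$ lies in $\soc B$ and $b\in A+\soc B$; conversely $p(A+\soc B) = pA$, giving the reverse inclusion. For $pp^{-1}A = A\cap \rad B$: noting $\rad B = pB$, the inclusion $\subset$ is immediate since $p\cdot p^{-1}A\subset A$ and $p\cdot p^{-1}A\subset pB$, while for $\supset$ any $c\in A\cap pB$ can be written $c=pb$ with $b\in p^{-1}A$. The remaining two identities follow by substitution: replacing $A$ by $pA$ in the second formula gives $pp^{-1}(pA)=pA\cap pB = pA$, and replacing $A$ by $p^{-1}A$ in the first gives $p^{-1}p(p^{-1}A) = p^{-1}A+\soc B = p^{-1}A$, since $\soc B\subset p^{-1}A$ holds trivially (as $p\cdot \soc B=0\subset A$).

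With these identities available, the corollary is a direct read-off. The composition $(A\subset B)\up\down$ produces $(pp^{-1}A\subset B) = (A\cap \rad B\subset B)$, which equals $(A\subset B)$ exactly when $A\subset \rad B$, proving the first equivalence; symmetrically, $(A\subset B)\down\up = (p^{-1}pA\subset B) = (A+\soc B\subset B)$, equal to $(A\subset B)$ iff $\soc B\subset A$, giving the second. The identities $(A\subset B)\up\down\up = (A\subset B)\up$ and $(A\subset B)\down\up\down = (A\subset B)\down$ come respectively from $p^{-1}pp^{-1}A = p^{-1}A$ and $pp^{-1}pA=pA$.

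There is no real obstacle in this proof; the only thing that requires care is choosing the right direction for each set inclusion in the two substantive identities, and being explicit that $\rad B = pB$ for $p$-modules so that the formula $pp^{-1}A = A\cap\rad B$ genuinely matches the statement of the first equivalence. Everything else is a substitution or a tautology.
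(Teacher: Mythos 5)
Your proof is correct and follows the same route as the paper: the paper cites exactly the four identities $p^{-1}pA = A+\soc B$, $pp^{-1}A = A\cap\rad B$, $pp^{-1}pA=pA$, $p^{-1}pp^{-1}A = p^{-1}A$ and declares the corollary an immediate consequence, which is what you prove in detail. Your elementwise verifications of the first two identities and the derivation of the last two by substitution (using $pA\subset pB$ and $\soc B\subset p^{-1}A$) are both sound.
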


\begin{example}
For $P_\ell^m$ a picket, and $s\geq 0$, we have
$$P_\ell^m\up^s = P_{u}^m \qquad\text{where}\qquad u=\min\{\ell+s,m\}.$$

For $T_2^{m,r}$ a bipicket, and $0\leq s\leq r-1$, we obtain the embedding
$$T_2^{m,r}\up^s 
  \;=\; \big(\;( (p^{m-2-s},p^{r-1-s}),(0,p^{r-s}))\;\subset\;P^m\oplus P^r \;\big)$$
where the subgroup is isomorphic to $P^{2+s}\oplus P^s$, and the factor to
$P^{m-s-1}\oplus P^{r-s-1}$.
In this case, $T_2^{m,r}\up^s\down_s = T_2^{m,r}$.
However, if $s> r-1$ then the pair $(A\subset B)=T_2^{m,r}\up^s$ 
is the direct sum of pickets
$$T_2^{m,r}\up^s = P^m_u \oplus P^r_r \quad\text{where}\quad
                                           u=\min\{1+s,m\}.$$
(To see this, note that for a pair $(A\subset B)$ with $B'$ 
a direct summand of $B$ contained in $A$,
the pair $(B'=B')$ is a direct summand
of $(A\subset B)$.  In this case, take $B'=P^r$.)

Here are some Klein tableaux of raised embeddings.
$$\beginpicture\setcoordinatesystem units <1.2cm,1.2cm>
  \setcounter{boxsize}{3}
  \put {} at 0 .5
  \put {} at 0 -.6
  \put {$P_2^5:$} at .2 0 
  \put {\begin{picture}(3,7)
       \put(0,0){\boxes53}
       \end{picture}} at 1 .5
  \put {$P_2^5\up^2:$} at 2 0
  \put {\begin{picture}(3,7)
       \put(0,3){\boxes33}\put(0,-4){\numbox 3}\put(0,-7){\numbox 4}
       \end{picture}} at 3 .5  
  \put {$T_2^{5,3}:$} at 4.5 0
  \put {\begin{picture}(6,7)
       \put(0,0){\lboxes 53}
       \end{picture}} at 5.5 .5
  \put {$T_2^{5,3}\up^2:$} at 7 0
  \put {\begin{picture}(6,7)
       \put(0,2){\boxes43}\put(3,5){\boxes23}
       \put(3,-1){\numbox 3}\put(0,-7){\numbox 4}
       \end{picture}} at 8.1 .5
\endpicture$$

\end{example}

The following lemma 
is an immediate consequence of the definition:

\begin{lemma}\label{lemma-adjoint}
The functors $\down_s, \up^s$ form an adjoint pair.
More precisely, we have for
$E$, $F$ in $\mathcal S$:
$$\Hom_{\mathcal S}(E\down_s, F)\;=\; 
                      \Hom_{\mathcal S}(E,F\up^s).$$
\qed
\end{lemma}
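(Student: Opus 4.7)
The plan is to unpack both sides as subsets of the same ambient $\Hom$-group of $R$-module maps and check the defining conditions match.

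First, write $E=(A\subset B)$ and $F=(C\subset D)$. A morphism in $\mathcal S$ from $(A'\subset B')$ to $(C'\subset D')$ is, by definition, an $R$-module map $B'\to D'$ carrying $A'$ into $C'$. So both $\Hom_{\mathcal S}(E\down_s,F)$ and $\Hom_{\mathcal S}(E,F\up^s)$ embed naturally into $\Hom_R(B,D)$: the underlying ambient module is $B$ on the source side and $D$ on the target side in both cases, since $\up$ and $\down$ leave the total module unchanged.

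Second, identify the subgroup condition on each side. On the left, $f\in\Hom_R(B,D)$ lies in $\Hom_{\mathcal S}(E\down_s,F)$ iff $f(p^sA)\subset C$, i.e.\ $p^sf(A)\subset C$. On the right, $f$ lies in $\Hom_{\mathcal S}(E,F\up^s)$ iff $f(A)\subset p^{-s}C=\{d\in D\mid p^sd\in C\}$. The equivalence
\[
p^sf(A)\subset C\iff f(A)\subset p^{-s}C
\]
is immediate from the definition of $p^{-s}C$, in both directions.

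This equivalence gives equality of the two subsets of $\Hom_R(B,D)$, proving the lemma. There is no real obstacle here; the only thing to be careful about is that $\up^s$ and $\down_s$ preserve the ambient modules $B$ and $D$, so the identification of both $\Hom$-sets as subsets of a single $\Hom_R(B,D)$ is legitimate, after which the argument is a one-line manipulation with the adjunction of multiplication by $p^s$ and its ``preimage'' $p^{-s}(-)$.
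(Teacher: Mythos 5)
Your proof is correct and is exactly the unpacking the paper has in mind; the paper presents this lemma as an immediate consequence of the definition and gives no separate argument, and your identification of both $\Hom$-sets as the subset $\{f\in\Hom_R(B,D)\mid p^sf(A)\subset C\}=\{f\mid f(A)\subset p^{-s}C\}$ is the right way to make ``immediate'' precise.
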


\begin{lemma}\label{lemma-reduced-prototype}
Suppose the Klein tableau for $E\in\mathcal S$ is given by 
$\Pi=[\gamma^0,\ldots,\gamma^e;\varphi^2,\ldots,\varphi^e]$, and $s\leq e$.
Then $E\!\down_s$ has Klein tableau 
$$\Pi(E\!\down_s)\;=\;
        [\gamma^s,\ldots,\gamma^e;\varphi^{2+s},\ldots,\varphi^e] \;=\; \Pi|_{e-s}^e.$$
\end{lemma}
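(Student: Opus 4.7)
The plan is to prove the lemma by direct substitution into the construction of the Klein tableau given in Subsection~\ref{subsection-Klein}. Writing $E = (A\subset B)$ and $E' = E\!\down_s = (A' \subset B)$ with $A' = p^s A$, the key observation is simply that $p^{\ell}A' = p^{\ell+s}A$ and $p^{\ell-2}A' = p^{\ell-2+s}A$ for every relevant $\ell$. Because the entire partition sequence defining $\Pi(E)$ is built by applying $\type$ to quotients of $B$ by submodules of the form $p^\ell A + p(p^{\ell-2}A \cap p^r B)$, the sequence for $\Pi(E')$ is obtained by substituting $A'$ for $A$, which just shifts the index $\ell$ by $s$.

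More precisely, I would first note that $A'$ has exponent $e' = e-s$ (with the understanding that the case $s = e$ gives a trivial tableau, matching $\Pi|_0^e = [\gamma^e]$). Next, for $0 \le \ell \le e'$ the coarse partitions satisfy
$${\gamma'}^{\ell} \;=\; \type(B/p^\ell A') \;=\; \type(B/p^{\ell+s}A) \;=\; \gamma^{\ell+s},$$
and for $2 \le \ell \le e'$ and $0 \le r < n$ the refined partitions satisfy
$${\gamma'}^{\ell,r} \;=\; \type\!\left(\frac{B}{p^\ell A' + p(p^{\ell-2}A' \cap p^r B)}\right) \;=\; \type\!\left(\frac{B}{p^{\ell+s} A + p(p^{\ell-2+s}A \cap p^r B)}\right) \;=\; \gamma^{\ell+s,r}.$$
Thus the coarse sequence attached to $E'$ is $(\gamma^s,\gamma^{s+1},\ldots,\gamma^e)$, and the box sets $\gamma^{\ell+s,r}\setminus\gamma^{\ell+s,r-1}$ determining the $\ell$-subscript function ${\varphi'}^{\ell}$ of $E'$ coincide with those determining $\varphi^{\ell+s}$ for $E$. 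Consequently ${\varphi'}^{\ell} = \varphi^{\ell+s}$ for $2 \le \ell \le e'$.

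Assembling these identifications produces
$$\Pi(E\!\down_s) \;=\; [{\gamma'}^0,\ldots,{\gamma'}^{e-s};{\varphi'}^2,\ldots,{\varphi'}^{e-s}] \;=\; [\gamma^s,\ldots,\gamma^e;\varphi^{s+2},\ldots,\varphi^e],$$
which is the definition of $\Pi|_{e-s}^e$ given in Subsection~\ref{section-Klein-tableaux}.

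There is no real obstacle: the entire argument is a matter of carefully bookkeeping the shift of index $\ell \mapsto \ell + s$ in the submodule chains of Subsection~\ref{subsection-Klein}. The mildly delicate point is confirming that the subscript condition~(iii) in the definition of Klein tableaux is preserved under truncation — but since the shifted data $[\gamma^s,\ldots,\gamma^e;\varphi^{s+2},\ldots,\varphi^e]$ is literally a sub-system of the original Klein sequence $\Pi$, conditions (i)--(iv) for $\Pi|_{e-s}^e$ are inherited verbatim from those for $\Pi$, so no separate verification is needed.
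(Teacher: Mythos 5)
Your proof is correct and matches the paper's proof in approach: both substitute $A' = p^s A$ into the defining partition sequence $\gamma^{\ell,r} = \type(B/p^\ell A + p(p^{\ell-2}A \cap p^r B))$ and observe that this produces the index shift $\ell \mapsto \ell + s$, yielding $\Pi|_{e-s}^e$. You spell out a few more steps (the coarse partitions, the recovery of the subscript functions, the inheritance of conditions (i)--(iv)), but the key idea is identical.
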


\begin{proof}
Suppose $E$ is given by the embedding $(A\subset B)$ and its Klein tableau 
$\Pi$ is defined by the partition sequence $(\gamma^{\ell,r})_{\ell,r}$ where
$\gamma^{\ell,r}=\type(B/p^\ell A+p(p^{\ell-2}A\cap p^r B))$.
Then $E\!\down_s=(p^sA\subset B)$, hence the Klein tableau
$\Pi(E\!\down_s)$ is defined by the partition sequence 
$(\gamma^{\ell+s,r})_{\ell,r}$.  
\end{proof}

\subsection{The categories $\mathcal S_2\up^{\ell-2}$}\label{section-s2up}

For $\ell\geq 2$, the category $\mathcal S_2\up^{\ell-2}$ 
consists of all pairs $(A\subset B)$
in $\mathcal S_\ell$ where $\soc^{\ell-2} B\subset A$.
For each indecomposable pair not in $\mathcal S_{\ell-1}$ we determine the
sink map in $\mathcal S_2\up^{\ell-2}$ and use this information to picture
the partial Auslander-Reiten quiver.

\begin{definition}
For $\mathcal U$ one of the categories $\mathcal S$, $\mathcal S_\ell$,
$\mathcal S(n)$, or $\mathcal S_\ell(n)$, 
let $\mathcal U\up$ and $\mathcal U\down$
be the full subcategories of $\mathcal S$ of all pairs $(A\subset B)\up$
and $(A\subset B)\down$, respectively, where $(A\subset B)\in \mathcal U$.
\end{definition}

\begin{lemma}\label{lemma-catiso}
The functors $\downarrow$, $\uparrow$ induce categorical isomorphisms
$$\mathcal S\up \cong \mathcal S\down,   \quad
   \mathcal S(n)\up \cong \mathcal S(n)\down,  \quad
   \mathcal S_\ell\up \cong\mathcal S_{\ell+1}\down,   \quad
   \mathcal S_\ell(n)\up  \cong \mathcal S_{\ell+1}(n)\down.$$
\end{lemma}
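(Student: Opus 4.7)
The plan is to show that the functors $\downarrow$ and $\uparrow$ restrict to mutually inverse isomorphisms between the four pairs of categories, with Corollary~\ref{corollary-updownup} providing essentially all the input needed.

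First I would characterize the objects of the image categories. Since $(A \subset B)\up = (p^{-1}A \subset B)$ and $\soc B \subset p^{-1}A$ always holds, every object of $\mathcal S\up$ satisfies $\soc B \subset A$; conversely, by Corollary~\ref{corollary-updownup} any such object satisfies $(A\subset B)\down\up = (A\subset B)$ and hence lies in the image of $\up$. Dually, $\mathcal S\down$ consists exactly of the pairs $(A\subset B)$ with $A \subset \rad B$. This identifies the essential images.

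Next, I would verify that $\downarrow$ maps $\mathcal S\up$ to $\mathcal S\down$ and $\uparrow$ maps $\mathcal S\down$ to $\mathcal S\up$: for the first, $(A\subset B) \in \mathcal S\up$ gives $(pA\subset B)$ with $pA \subset \rad B$; the second is symmetric. The equalities $\up\down = \mathrm{id}$ on $\mathcal S\up$ and $\down\up = \mathrm{id}$ on $\mathcal S\down$ are then immediate from Corollary~\ref{corollary-updownup} (using the characterizations above), and on morphisms both functors simply restrict the underlying map $g\colon B\to B'$ to the new first terms, so that the inverse identities extend to morphisms automatically. This yields the first isomorphism $\mathcal S\up \cong \mathcal S\down$.

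For the three refinements, the additional bookkeeping is small. The condition $p^nB = 0$ is preserved by both functors since neither changes $B$, giving $\mathcal S(n)\up \cong \mathcal S(n)\down$. For the boundedness of the first term, I would compute $p^{\ell+1}(p^{-1}A) = p^\ell(A \cap \rad B) \subset p^\ell A$ and $p^\ell(pA) = p^{\ell+1}A$, showing that $\up$ sends $\mathcal S_\ell$ into $\mathcal S_{\ell+1}$ while $\down$ sends $\mathcal S_{\ell+1}$ into $\mathcal S_\ell$. Combined with the essential-image description, this gives $\mathcal S_\ell\up = \mathcal S_{\ell+1} \cap \mathcal S\up$ and $\mathcal S_{\ell+1}\down = \mathcal S_\ell \cap \mathcal S\down$, and the already-established inverse pair of functors restricts to the claimed isomorphism $\mathcal S_\ell\up \cong \mathcal S_{\ell+1}\down$; intersecting with the $p^nB=0$ condition also yields the last isomorphism.

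The only mild obstacle is keeping track of the shift $\ell \leftrightarrow \ell+1$ when crossing $\up$ or $\down$, and verifying that the image description one obtains by composing the socle/radical condition with the bound really matches both sides — but each of these is a one-line computation using $pp^{-1}A = A\cap \rad B$ and $p^{-1}pA = A + \soc B$, so no serious difficulty arises.
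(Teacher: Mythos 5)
Your proof is correct and takes essentially the same route as the paper's one-line argument, which simply invokes Corollary~\ref{corollary-updownup} to conclude that the two composites are the identity on objects and morphisms; you supply the characterizations of $\mathcal S\up$ and $\mathcal S\down$ as $\{\soc B\subset A\}$ and $\{A\subset\rad B\}$ and the $\ell\leftrightarrow\ell+1$ bookkeeping, which the paper leaves implicit. One small slip to fix: with the paper's right-action convention (as in Corollary~\ref{corollary-updownup}, and as you yourself use earlier in the sentence ``$(A\subset B)\down\up=(A\subset B)$''), the identities should read $\down\up=\mathrm{id}$ on $\mathcal S\up$ and $\up\down=\mathrm{id}$ on $\mathcal S\down$, the opposite of what you wrote; this is purely notational and does not affect the substance of the argument.
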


\begin{proof}
According to Corollary~\ref{corollary-updownup}, 
the functors $\downarrow\up$, $\uparrow\down$ are the identity
on each object and on each homomorphism group.
\end{proof}

\begin{proposition}\label{proposition-sink}
Let $\ell>2$. If $(A\subset B)\up^{\ell-2}$ in $\mathcal S_2\up^{\ell-2}$ 
is an indecomposable object with $p^{\ell-1}A\neq 0$, 
and $v$ the sink map for $(A\subset B)$ in $\mathcal S_2$,
then the sink map $v'$ for $(A\subset B)\up^{\ell-2}$ 
in $\mathcal S_2\up^{\ell-2}$
is the minimal version of $v\up^{\ell-2}$.
In particular, for $E\in\mathcal S$, 
$$\Im\Hom_{\mathcal S}(E,v')=\Im\Hom_{\mathcal S}(E,v\up^{\ell-2}).$$
\end{proposition}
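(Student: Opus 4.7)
The plan is to establish right-almost-splitness of $v\up^{\ell-2}$ in the lifted category $\mathcal{S}_2\up^{\ell-2}$ by transferring the universal property of $v$ via the adjoint pair $(\down, \up)$ from Lemma~\ref{lemma-adjoint}; the standard theory of almost split morphisms from \cite{ars, as} then identifies the sink map $v'$ as the unique minimal version of $v\up^{\ell-2}$, after which the image equality follows automatically.

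For the right-almost-split check, take an arbitrary morphism $h : W \to (A\subset B)\up^{\ell-2}$ in $\mathcal{S}_2\up^{\ell-2}$ which is not a split epimorphism. By iterated Lemma~\ref{lemma-adjoint}, $h$ corresponds uniquely to $\tilde h : W\down^{\ell-2} \to (A \subset B)$, and writing $W = W_0\up^{\ell-2}$ for some $W_0 \in \mathcal{S}_2$, the relation $\up\down\up = \up$ from Corollary~\ref{corollary-updownup} implies $W\down^{\ell-2} \in \mathcal{S}_2$ and that the adjunction unit $\eta_W : W \to W\down^{\ell-2}\up^{\ell-2}$ is the identity. In particular $h = \tilde h\up^{\ell-2}$, so any splitting of $\tilde h$ would lift directly to a splitting of $h$; thus the hypothesis on $h$ forces $\tilde h$ itself to be a non-split epimorphism. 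The hypothesis $p^{\ell-1}A \neq 0$ enters here to guarantee that $(A\subset B)\up^{\ell-2}$ is genuinely $p^\ell$-torsion on its first term (so Lemma~\ref{lemma-catiso} iterated provides the relevant categorical equivalence at this object and $(A\subset B)$ is not projective in $\mathcal{S}_2$). Since $v$ is the sink map for $(A \subset B)$ in $\mathcal{S}_2$, we obtain $\tilde h = v g_0$ for some $g_0 : W\down^{\ell-2} \to Z$, and applying $\up^{\ell-2}$ yields $g := g_0\up^{\ell-2} : W \to Z\up^{\ell-2}$ satisfying $(v\up^{\ell-2})\,g = h$. This proves $v\up^{\ell-2}$ is right almost split.

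Standard Krull-Remak-Schmidt minimization then decomposes $Z\up^{\ell-2} = Z' \oplus Z''$ with $v' := v\up^{\ell-2}|_{Z'}$ the unique right-minimal, hence sink, map, while $v\up^{\ell-2}|_{Z''} = v' \psi$ for some $\psi : Z'' \to Z'$. For the ``in particular'' equality, any $f = (f', f'') \in \Hom_{\mathcal{S}}(E, Z' \oplus Z'')$ gives $v\up^{\ell-2} f = v'(f' + \psi f'')$, so $\Im \Hom_{\mathcal{S}}(E, v\up^{\ell-2}) \subseteq \Im \Hom_{\mathcal{S}}(E, v')$, and the reverse inclusion is trivial. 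The principal technical obstacle is confirming that the adjunction preserves the non-split-epimorphism property at individual morphisms; the hypothesis $p^{\ell-1}A \neq 0$ is precisely the non-degeneracy condition that makes this transfer work, ruling out the pathological case in which the lifted object would be accidentally projective in $\mathcal{S}_2\up^{\ell-2}$ and $v\up^{\ell-2}$ would be trivially a split epimorphism rather than right almost split.
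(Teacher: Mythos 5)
Your adjunction-based argument takes a genuinely different, more conceptual route than the paper.  The paper does not argue abstractly; after invoking Lemma~\ref{lemma-catiso} it simply computes the sink maps (and the full Auslander--Reiten sequences) in $\mathcal S_2\up^{\ell-2}$ explicitly, case by case for each of $P_\ell^\ell$, $P_\ell^m$ ($m>\ell$), and $T_2^{m,r}\up^{\ell-2}$, and checks in each case that the result is the minimal version of $v\up^{\ell-2}$.  That computation is actually used elsewhere in the paper (to picture the AR quiver for $\mathcal S_2\up^{\ell-2}$), so it is not wasted effort, but your transfer argument --- passing a non-split-epi $h:W\to T\up^{\ell-2}$ to $\tilde h:W\down_{\ell-2}\to T$ via Lemma~\ref{lemma-adjoint}, noting that $\eta_W=\mathrm{id}$ because $W$ lies in $\mathcal S_2\up^{\ell-2}$ so that $h=\tilde h\up^{\ell-2}$, and then invoking the sink-map property of $v$ in $\mathcal S_2$ --- does establish the factoring property in one stroke and is a cleaner way to see why the statement holds.

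There are, however, two defects.  First, you never verify that $v\up^{\ell-2}$ is itself not a split epimorphism; this is part of the definition of right almost split, and without it one cannot conclude that the minimal version of $v\up^{\ell-2}$ is a sink map rather than an isomorphism.  The check does go through: for the objects $T$ allowed by the hypothesis one has $T\up^{\ell-2}\down_{\ell-2}=T$ (the first term of $T$ lies in $p^{\ell-2}B$), so the counit $\epsilon_T$ is the identity; a section $\sigma$ of $v\up^{\ell-2}$ then yields, after applying $\down_{\ell-2}$ and using naturality of the counit, a section of $v$ in $\mathcal S_2$, contradiction.  Second, your reading of the hypothesis $p^{\ell-1}A\neq 0$ is wrong.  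It does \emph{not} rule out the lifted object being projective in $\mathcal S_2\up^{\ell-2}$: the projective $P_\ell^\ell=P_2^\ell\up^{\ell-2}$ satisfies the hypothesis and is exactly the paper's case (a), and there $v\up^{\ell-2}$ is still not a split epi (the sink map of a non-semisimple projective is a radical inclusion).  The case the hypothesis does exclude, relevant to the ``not split epi'' step, is $T=P_2^2$ --- the projective object of $\mathcal S_2$ itself --- for which $v\up^{\ell-2}$ really is an isomorphism.  The hypothesis also excludes $P_0^m\up^{\ell-2}$ and $P_1^m\up^{\ell-2}$, but for the latter there is not even a sink map in $\mathcal S_2$, so the premise of the proposition already fails there.
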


\begin{proof}
By Lemma~\ref{lemma-catiso}, the category $\mathcal S_2\up^{\ell-2}$ is equivalent
to the full subcategory of $\mathcal S_2$ of all pairs $(A\subset B)$ where
$A\subset p^{\ell-2}B$.  
We obtain the sink maps in the proposition by taking minimal versions of the
liftings of the corresponding sink maps in $\mathcal S_2$. 
We are interested in pickets of the form $P_\ell^m$ where $\ell\leq m$ (cases (a) and (b)),
and in bipickets of type $T_2^{m,r}\up^{\ell-2}$ where $\ell\leq r+1< m$ (cases (c) and (d)).

\begin{itemize}
\item[(a)] The picket $P_\ell^\ell$ is projective in $\mathcal S_\ell$, and hence
in $\mathcal S_2\up^{\ell-2}$, and has as sink map the inclusion
$$ P_{\ell-1}^\ell \to P_\ell^\ell.$$
This is the minimal version of the map
$v\up^{\ell-2}: P_{\ell-1}^\ell\oplus P_{\ell-2}^{\ell-2}\to P_\ell^\ell$,
obtained by lifting the sink map $v:T_2^{\ell,\ell-2}\to P_2^\ell$ 
in $\mathcal S_2$ (see the example in (\ref{subsection-lifting})).

\item[(b)] For $m>\ell$, the Auslander-Reiten sequence for the picket 
$P_2^m$ in $\mathcal S_2$,
$$0\to P_0^{m-2} \to T_2^{m,m-2} \to P_2^m \to 0 $$
yields the Auslander-Reiten sequence for the picket $P_\ell^m$ in
$\mathcal S_2\up^{\ell-2}$,
$$0 \to P_{\ell-2}^{m-2} 
    \to T_2^{m,m-2}\up^{\ell-2} 
    \to P_\ell^m \to 0.$$

\item[(c$_1$)]  We first consider the case where $\ell=r+1<m-1$.
The first term of the sink map in $\mathcal S_2$,
$$ T_2^{m-1,r}\oplus T_2^{m,r-1} \to T_2^{m,r}$$
decomposes after lifting into $\mathcal S_2\up^{\ell-2}$;
the lifted map is a right almost split morphism in this category:
$$ T_2^{m-1,r}\up^{\ell-2} \oplus P_{\ell-1}^m \oplus P_{r-1}^{r-1} 
       \to T_2^{m,r}\up^{\ell-2}$$
The minimal version of this map is the sink map $v$ for $T_2^{m,r}\up^{\ell-2}$
in the Auslander-Reiten sequence in $\mathcal S_2\up^{\ell-2}$,
$$ 0   \to P_{\ell-1}^{m-1}
       \to T_2^{m-1,r}\up^{\ell-2} \oplus P_{\ell-1}^m 
       \stackrel v\to T_2^{m,r}\up^{\ell-2}
       \to 0. $$

\item[(c$_2$)] Similarly in case $\ell=r+1=m-1$, one shows that the sink map
for  $T_2^{m,r}\up^{\ell-2}$ in $\mathcal S_2\up^{\ell-2}$ is the
epimorphism in the sequence:
$$  0   \to P_{\ell-1}^{\ell}
       \to P_\ell^\ell \oplus P_{\ell-2}^{\ell-1} \oplus P_{\ell-1}^{\ell+1} 
       \stackrel v\to T_2^{\ell+1,\ell-1}\up^{\ell-2}
       \to 0. $$

\item[(d)] Finally for $\ell< r+1 < m$, 
the Auslander-Reiten sequence in $\mathcal S_2$ ending in $T_2^{m,r}$ 
yields an Auslander-Reiten sequence in $\mathcal S_2\up^{\ell-2}$ of type
$$\qquad 0 \to T_2^{m-1,r-1}\up^{\ell-2}  
    \to T_2^{m,r-1}\up^{\ell-2}  \oplus T_2^{m-1,r}\up^{\ell-2}  
    \to T_2^{m,r}\up^{\ell-2} 
    \to 0 $$
if $m > r+2$, and in case if $m=r+2$ of type:
$$\qquad 0 \to T_2^{m-1,r-1}\up^{\ell-2}  
    \to T_2^{m,r-1}\up^{\ell-2}  \oplus P_\ell^{m-1}\oplus P_{\ell-2}^r
    \to T_2^{m,r}\up^{\ell-2} 
    \to 0 $$
\end{itemize}
\end{proof}

There is no assertion in the proposition about sink maps ending at pickets
of type $P_0^m\up^{\ell-2}$ or $P_1^m\up^{\ell-2}$.  In fact,
there exist sink maps in $\mathcal S_2\up^{\ell-2}$ for the pickets of the form
$P_0^m\up^{\ell-2}$.  
As those maps will not be needed in the following, we leave it as an exercise for the reader
to determine the corresponding Auslander-Reiten sequences.
On the other hand, pickets of the form $P_1\up^{\ell-2}$ are neither projective nor
will they admit a sink map.
In the example below, they are labeled with an $\times$.

\begin{example}
Here is the partial Auslander-Reiten quiver for $\mathcal S_2\up^2$:

$$
\beginpicture\setcoordinatesystem units <1.2cm,1.2cm>
\setcounter{boxsize}{3}
\put {} at 6 1.2
\put{\begin{picture}(3,0)\rrboxes31\end{picture}} at 4 6
\put{$\times$} at 3.7 6
\put{\begin{picture}(3,0)\boxes23\end{picture}} at 5 8
\put{\begin{picture}(3,0)\rrboxes41\end{picture}} at 5 4
\put{$\times$} at 4.7 4
\put{\begin{picture}(3,0)\boxes33\end{picture}} at 6 6
\put{\begin{picture}(3,0)\rrboxes43\end{picture}} at 6 4.5
\put{\begin{picture}(3,0)\rrboxes51\end{picture}} at 6 3
\put{$\times$} at 5.7 3
\put{\begin{picture}(3,0)\rrlboxes53\end{picture}} at 7 4
\put{{\footnotesize\bf(*)}} at 6.5 3.9
\put{\begin{picture}(3,0)\rrboxes61\end{picture}} at 7 2
\put{$\times$} at 6.7 2
\put{\begin{picture}(3,0)\rrboxes53\end{picture}} at 8 6
\put{\begin{picture}(3,0)\boxes43\end{picture}} at 8 4.5
\put{\begin{picture}(3,0)\rrlboxes63\end{picture}} at 8 3
\put{\begin{picture}(3,0)\rrlboxes64\end{picture}} at 9 4
\put{\begin{picture}(3,0)\rrboxes63\end{picture}} at 10 4.5
\put{\begin{picture}(3,0)\boxes53\end{picture}} at 10 6
\arr{4.3 6.6}{4.7 7.4}
\arr{5.3 7.4}{5.7 6.6}
\arr{4.3 5.4}{4.7 4.6}
\arr{5.3 4.6}{5.7 5.4}
\arr{5.3 4.15}{5.7 4.35}
\arr{5.3 3.7}{5.7 3.3}
\arr{6.3 5.4}{6.7 4.6}
\arr{6.3 4.35}{6.7 4.15}
\arr{6.3 3.3}{6.7 3.7}
\arr{6.3 2.7}{6.7 2.3}
\arr{7.3 4.6}{7.7 5.4}
\arr{7.3 4.15}{7.7 4.35}
\arr{7.3 3.7}{7.7 3.3}
\arr{7.3 2.3}{7.7 2.7}
\arr{8.3 5.4}{8.7 4.6}
\arr{8.3 4.35}{8.7 4.15}
\arr{8.3 3.3}{8.7 3.7}
\arr{9.3 4.6}{9.7 5.4}
\arr{9.3 4.15}{9.7 4.35}
\setdots<2pt>
\plot 6.3 6  7.7 6 /
\plot 8.3 6  9.7 6 /
\plot 10.3 6  11 6 /
\plot 6.3 4.5  6.7 4.5 /
\plot 7.3 4.5  7.7 4.5 /
\plot 8.3 4.5  8.7 4.5 /
\plot 9.3 4.5  9.7 4.5 /
\plot 10.3 4.5  11 4.5 /
\multiput{$\ddots$} at 11 4  9 2 /
\endpicture
$$
\end{example}

In Example (3) in (\ref{subsection-S5}) 
we will use the Auslander-Reiten sequence ending at $T_2^{5,3}\up^2$
and labelled (*) to determine all objects with a $\singlebox{4_3}$ in the 5-th row 
of their Klein tableau.

\subsection{Approximations}

For $E:(A\subset B)$ an embedding of $p$-modules, 
and $\ell$ a natural number, put
$$E|^\ell \;=\; \big( A/p^\ell A \subset B/p^\ell A\big).$$

\smallskip
The following results follow immediately from the  definition.

\begin{lemma}\label{lemma-approximation}
The canonical map $\pi:E\to E|^\ell$ is a 
minimal left approximation of $E$ in $\mathcal S_\ell$.
\end{lemma}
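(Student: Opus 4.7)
The plan is to verify two properties of $\pi: E \to E|^\ell$: (a) it is a left $\mathcal S_\ell$-approximation, and (b) it is left minimal. Write $E = (A \subset B)$ and $\pi = (\pi_A, \pi_B)$ with $\pi_A: A \twoheadrightarrow A/p^\ell A$ and $\pi_B: B \twoheadrightarrow B/p^\ell A$. Note first that $E|^\ell$ does lie in $\mathcal S_\ell$ since its subgroup $A/p^\ell A$ is annihilated by $p^\ell$.

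For the approximation property, I would take any morphism $f = (f_A,f_B): E \to F$ with $F = (A' \subset B') \in \mathcal S_\ell$, so $p^\ell A' = 0$. The point is that $f_A(p^\ell A) = p^\ell f_A(A) \subset p^\ell A' = 0$, which forces $f_B(p^\ell A) = 0$ via the inclusion $A' \subset B'$. Hence both $f_A$ and $f_B$ vanish on $p^\ell A$, so $f$ descends uniquely to a morphism $\bar f: E|^\ell \to F$ with $\bar f \circ \pi = f$. This gives the universal property of a left $\mathcal S_\ell$-approximation.

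For left minimality, I would take an endomorphism $\phi = (\phi_A,\phi_B)$ of $E|^\ell$ satisfying $\phi \circ \pi = \pi$. Since $\pi_A$ and $\pi_B$ are surjective, this equation forces $\phi_A = \mathrm{id}$ and $\phi_B = \mathrm{id}$, so $\phi$ is the identity (and in particular an automorphism). Thus $\pi$ is left minimal.

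There is no real obstacle here: both parts reduce to the fact that $\pi$ is a componentwise surjection whose kernel is exactly $p^\ell A$ (inside $A$) and $p^\ell A$ (inside $B$), so any map to an $\mathcal S_\ell$-object kills the kernel and factors uniquely, while surjectivity makes $\pi$ epi in $\mathcal S$ and hence left minimal in the strong sense that the only retraction-type selfmap fixing $\pi$ is the identity.
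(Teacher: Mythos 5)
Your proof is correct and follows essentially the same route as the paper: you verify the approximation property by showing any map into an object of $\mathcal S_\ell$ annihilates $p^\ell A$ and hence factors through $\pi$, and you obtain left minimality from the componentwise surjectivity of $\pi$ (you in fact get the slightly stronger conclusion $\phi = \mathrm{id}$, which the paper's terse argument also implicitly yields).
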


\begin{proof}
Every map $E\to F$ with $F\in\mathcal S_\ell$ factors over $\pi$,
so $\pi$ is a left approximation for $E$ in $\mathcal S_\ell$.
Since $\pi$ is onto, any map $u\in\End(E|^\ell)$ which 
satisfies $\pi=u\pi$ is an isomorphism, so $\pi$ is in addition left
minimal.
\end{proof}

\begin{lemma}
\sloppypar{Suppose an embedding $E\in\mathcal S$ has Klein tableau $\Pi=[\gamma^0,\ldots,\gamma^e;
\varphi^2,\ldots,\varphi^e]$, and $\ell$ is a natural number at most $e$.
Then}
$$\Pi(E|^\ell)\;=
\;[\gamma^0,\ldots,\gamma^\ell;\varphi^2,\ldots,\varphi^\ell] \;=\;\Pi|^\ell.$$\qed
\end{lemma}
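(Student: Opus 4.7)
The plan is to verify the identity on the level of defining partition sequences: for the Klein tableau of $E|^\ell = (A' \subset B')$ with $A' = A/p^\ell A$ and $B' = B/p^\ell A$, show that the partitions $\gamma^{k,r}(E|^\ell)$ coincide with $\gamma^{k,r}(E)$ for all relevant indices $k \leq \ell$, and that the truncation beyond $\ell$ is trivial because $p^\ell A' = 0$.

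First I would record that $A'$ is $p^\ell$-bounded, so the Klein tableau $\Pi(E|^\ell)$ lives on the index range $0 \leq k \leq \ell$; the statement thus reduces to identifying $\gamma^{k,r}(E|^\ell) = \gamma^{k,r}(E)$ for $2 \leq k \leq \ell$ (and $\gamma^k(E|^\ell) = \gamma^k(E)$ for $0 \leq k \leq \ell$, which handles the LR-backbone and, together with the $\gamma^{k,r}$'s, yields $\varphi^k(E|^\ell) = \varphi^k$). Writing $\pi : B \to B' = B/p^\ell A$ for the projection, the key computations are:
\begin{enumerate}
\item $p^k A' = \pi(p^k A)$, valid for $k \leq \ell$ since $p^\ell A \subset p^k A$;
\item $p^r B' = \pi(p^r B + p^\ell A)$, from $p^r(B/p^\ell A) = (p^r B + p^\ell A)/p^\ell A$;
\item $p^{k-2} A' \cap p^r B' = \pi(p^{k-2}A \cap p^r B)$ for $2 \leq k \leq \ell$.
\end{enumerate}

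The heart of the argument, and the only nontrivial verification, is item (3). The $\supset$ inclusion is immediate. For $\subset$, given $b \in B$ with $\pi(b) \in p^{k-2}A' \cap p^r B'$, one gets $a \in p^{k-2}A$ and $c \in p^r B$ with $b - a \in p^\ell A$ and $b - c \in p^{k-2}A + p^\ell A = p^{k-2}A$ (using $\ell \geq k \geq k-2$). Hence $c \in p^{k-2}A \cap p^r B$, so $\pi(b) = \pi(c)$ lies in $\pi(p^{k-2}A \cap p^r B)$. Applying $p$ and adding $p^k A'$ then gives
$$ p^k A' + p\bigl(p^{k-2}A' \cap p^r B'\bigr) = \pi\bigl(p^k A + p(p^{k-2}A \cap p^r B)\bigr). $$
Since $p^\ell A \subset p^k A$, the induced map on quotients is an isomorphism
$$ \frac{B'}{p^k A' + p(p^{k-2}A' \cap p^r B')} \;\cong\; \frac{B}{p^k A + p(p^{k-2}A \cap p^r B)}, $$
giving $\gamma^{k,r}(E|^\ell) = \gamma^{k,r}$ for $2 \leq k \leq \ell$; specializing $r = n-1$ (or taking no intersection) yields $\gamma^k(E|^\ell) = \gamma^k$ for $0 \leq k \leq \ell$ as well.

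Finally I would read off the conclusion: the partition sequence for $\Pi(E|^\ell)$ on the range $0 \leq k \leq \ell$ agrees with that of $\Pi$, so the associated subscript functions $\varphi^k(E|^\ell) = \varphi^k$ for $2 \leq k \leq \ell$ agree too, and $\Pi(E|^\ell) = [\gamma^0, \ldots, \gamma^\ell; \varphi^2, \ldots, \varphi^\ell] = \Pi|^\ell$. I expect the main (only) technical obstacle to be the modular arithmetic in step (3): making sure that one does not inadvertently enlarge the intersection when passing to the quotient; this is where the hypothesis $k \leq \ell$ is used crucially.
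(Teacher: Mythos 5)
Your verification is correct and follows exactly the route the paper has in mind: the lemma is stated with an inline $\qed$ (i.e., immediate from the definitions), and you have spelled out why the defining partition sequence $(\gamma^{k,r})$ for $E|^\ell$ agrees with that of $E$ on the range $k\leq\ell$. One small slip in the write-up of step (3): from $\pi(b)\in p^rB'=(p^rB+p^\ell A)/p^\ell A$ you get $b-c\in p^\ell A$ directly for some $c\in p^rB$ (not the weaker $b-c\in p^{k-2}A$ as written), and it is this containment in $\ker\pi=p^\ell A$ that justifies $\pi(b)=\pi(c)$; the inclusion $c\in p^{k-2}A$ then follows because $b\in p^{k-2}A$ (from $b-a\in p^\ell A\subset p^{k-2}A$) and $b-c\in p^\ell A\subset p^{k-2}A$.
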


We obtain from Lemma~\ref{lemma-reduced-prototype}:

\begin{lemma}
Suppose $E$ has Klein tableau $\Pi$ and $k\leq \ell$. 
The embeddings $E\down_{\ell-k}|^k$ and $E|^\ell\down_{\ell-k}$
coincide as objects in $\mathcal S$ and have Klein tableau
$$\Pi(E\down_{\ell-k}|^k)=
  [\gamma^{\ell-k},\gamma^{\ell-k+1},\ldots,\gamma^\ell;
    \varphi^{\ell-k+2},\varphi^{\ell-k+3},\ldots,\varphi^\ell]
  = \Pi|_k^\ell.
$$
In the case where $k=2$ we obtain
$$\Pi(E\down_{\ell-2}|^2)=
  [\gamma^{\ell-2},\gamma^{\ell-1},\gamma^\ell;\varphi^\ell]
  = \Pi|_2^\ell.
$$\qed
\end{lemma}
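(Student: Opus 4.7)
The plan is to first check that the two embeddings coincide as objects in $\mathcal{S}$, and then to deduce the Klein tableau identity by chaining the two lemmas immediately preceding the statement.

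For the first part, I would simply unwind the definitions of $\!\down_s$ and $|^k$. Writing $E=(A\subset B)$, the composition $E\!\down_{\ell-k}|^k$ applies $\!\down_{\ell-k}$ first to obtain $(p^{\ell-k}A\subset B)$, and then quotients out the subgroup by $p^k(p^{\ell-k}A)=p^\ell A$, giving $(p^{\ell-k}A/p^\ell A\subset B/p^\ell A)$. The reverse composition $E|^\ell\!\down_{\ell-k}$ first yields $(A/p^\ell A\subset B/p^\ell A)$ and then replaces the subgroup by $p^{\ell-k}(A/p^\ell A)=(p^{\ell-k}A+p^\ell A)/p^\ell A$; since $p^\ell A\subset p^{\ell-k}A$, this again equals $p^{\ell-k}A/p^\ell A$. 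Hence the two embeddings agree in $\mathcal{S}$.

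For the Klein tableau, I would first invoke the preceding lemma on minimal approximations to conclude that $\Pi(E|^\ell)=\Pi|^\ell=[\gamma^0,\ldots,\gamma^\ell;\varphi^2,\ldots,\varphi^\ell]$. Applied to the intermediate object $E|^\ell$, whose exponent is $\ell$, Lemma~\ref{lemma-reduced-prototype} with parameter $s=\ell-k$ drops the first $\ell-k$ partitions and the first $\ell-k$ subscript functions from this tableau, yielding
$$\Pi\bigl(E|^\ell\!\down_{\ell-k}\bigr)=[\gamma^{\ell-k},\ldots,\gamma^\ell;\varphi^{\ell-k+2},\ldots,\varphi^\ell]=\Pi|_k^\ell.$$
Combined with the identification of embeddings from the first step, this gives the desired formula; the case $k=2$ follows by direct substitution.

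There is no serious obstacle: the lemma is a bookkeeping consequence of the two preceding lemmas together with the elementary identity $p^k(p^{\ell-k}A)=p^\ell A$. The only point to watch is that Lemma~\ref{lemma-reduced-prototype} should be applied to $E|^\ell$ rather than to $E$, so that the ``ambient exponent'' becomes $\ell$ and the shift $s=\ell-k$ produces exactly the index range appearing in $\Pi|_k^\ell$.
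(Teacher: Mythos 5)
Your proof is correct and is exactly the bookkeeping the paper leaves implicit (it marks the lemma with \qed and no argument): you verify $p^k(p^{\ell-k}A)=p^\ell A$ and $p^{\ell-k}(A/p^\ell A)=p^{\ell-k}A/p^\ell A$ to identify the two embeddings, and then chain the minimal-approximation lemma with Lemma~\ref{lemma-reduced-prototype} applied to $E|^\ell$. Your final remark about applying Lemma~\ref{lemma-reduced-prototype} to $E|^\ell$ rather than to $E$ is a sensible caution, though the other order also works after a reindexing step.
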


We write $E|_k^\ell=E|^\ell\down_{\ell-k}=E\down_{\ell-k}|^k$.
Combining this result with Proposition~\ref{proposition-KRS-multiplicity},
we can interpret the number of boxes $\singlebox{\ell_r}$ in the Klein tableau
of an embedding $E:(A\subset B)$ in terms of multiplicities of summands
in a subfactor of $E$.

\begin{corollary}\label{corollary-multiplicity}
Let $\Pi$ be the Klein tableau of an object $E\in\mathcal S$, and let
$2\leq\ell\leq r+1\leq m$ be positive integers.  
The number of boxes $\singlebox{\ell_r}$ in the $m$-th row of $\Pi$
equals the multiplicity of the picket $P_2^m$ (if $r=m-1$) 
or the bipicket $T_2^{m,r}$ (if $r<m-1$) in the 
direct sum decomposition for 
$$E|^\ell_2 \quad = \quad 
(p^{\ell-2}A/p^\ell A\;\subset\;B/p^\ell A).$$
\end{corollary}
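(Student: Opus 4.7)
The plan is to combine the two previous results: the lemma identifying the Klein tableau of $E|_2^\ell$, and Proposition~\ref{proposition-KRS-multiplicity}, which reads off multiplicities of pickets and bipickets in $\mathcal S_2$-objects directly from their Klein tableaux.

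First, I would observe that $E|_2^\ell=(p^{\ell-2}A/p^\ell A\subset B/p^\ell A)$ lies in $\mathcal S_2$, since its subgroup is annihilated by $p^2$. Hence Proposition~\ref{proposition-KRS-multiplicity} applies: for $r<m-1$ the multiplicity of $T_2^{m,r}$ as a summand of $E|_2^\ell$ equals the number of boxes $\singlebox{2_r}$ in the $m$-th row of the Klein tableau $\Pi(E|_2^\ell)$, while for $r=m-1$ the multiplicity of $P_2^m$ equals the number of boxes $\singlebox{2_{m-1}}$ in that row.

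Second, the immediately preceding lemma identifies this Klein tableau as the restriction $\Pi(E|_2^\ell)=\Pi|_2^\ell$. By the description of restrictions recalled in Section~\ref{section-Klein-tableaux}, the tableau $\Pi|_2^\ell$ lives on the skew shape $\gamma^\ell\setminus\gamma^{\ell-2}$, and each entry $x$ in $\Pi|_2^\ell$ comes from the corresponding entry $x+\ell-2$ in $\Pi$, with the subscript inherited whenever $x\geq 2$. In particular, the symbols $\singlebox{2_r}$ in row $m$ of $\Pi|_2^\ell$ are in bijection with the symbols $\singlebox{\ell_r}$ in row $m$ of $\Pi$; the hypothesis $\ell\leq r+1\leq m$ ensures that such boxes in $\Pi$ indeed translate to legitimate subscripted entries $2_r$ with $r\leq m-1$ in $\Pi|_2^\ell$.

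Combining these two observations, the number of boxes $\singlebox{\ell_r}$ in the $m$-th row of $\Pi$ equals the number of boxes $\singlebox{2_r}$ in the $m$-th row of $\Pi|_2^\ell$, which by Proposition~\ref{proposition-KRS-multiplicity} equals the multiplicity of $T_2^{m,r}$ (if $r<m-1$) or $P_2^m$ (if $r=m-1$) as a summand of $E|_2^\ell$. Since both ingredients are already established, there is no real obstacle; the only point requiring care is verifying that the translation of symbols under restriction behaves as stated, which is a direct consequence of the definition of $\Pi|_2^\ell$ and of the subscript functions $\varphi^\ell$.
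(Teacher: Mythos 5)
Your argument matches the paper's proof: both reduce the claim to Proposition~\ref{proposition-KRS-multiplicity} after identifying $\Pi(E|_2^\ell)=\Pi|_2^\ell$ via the preceding lemma and observing that boxes $\singlebox{\ell_r}$ in row $m$ of $\Pi$ correspond to boxes $\singlebox{2_r}$ in row $m$ of $\Pi|_2^\ell$. No gaps; the reasoning is correct and follows the same route as the paper.
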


\begin{proof}
The number of boxes $\singlebox{\ell_r}$ in the $m$-th row of $\Pi$
equals the number of boxes $\singlebox{2_r}$ in the $m$-th row
of $\Pi|_2^\ell$.
According to Proposition~\ref{proposition-KRS-multiplicity}, for each
symbol $\singlebox{2_r}$ there is an indecomposable summand of type
$P_2^m$ or $T_2^{m,r}$ in the direct sum decomposition of the 
corresponding object $E|^\ell_2$ in $\mathcal S_2$.
\end{proof}

We recover the corresponding result for LR-tableaux \cite[Corollary~2]{lr}
in the case where $\ell > 1$:

\begin{corollary}
Let $\Gamma$ be the LR-tableau corresponding to an object $E\in\mathcal S$,
and let $1\leq \ell\leq m$.
The number of boxes $\singlebox \ell$ in the $m$-th row of $\Gamma$
equals the multiplicity of the picket $P_1^m$ in the direct sum
decomposition for $E|^\ell_1$.
\end{corollary}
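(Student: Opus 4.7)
The plan is to reduce this to the analogous statement for Klein tableaux by translating through the subfactor $E|_1^\ell$, which lives in the category $\mathcal{S}_1$ and hence has a particularly simple indecomposable decomposition.

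First I would observe that, since $p(p^{\ell-1}A/p^\ell A)=0$, the subfactor $E|_1^\ell$ lies in $\mathcal{S}_1\subset\mathcal{S}_2$; its indecomposable summands can therefore only be pickets of type $P_0^m$ or $P_1^m$ (no $P_2^m$ or $T_2^{m,r}$ can appear, since those require a nonzero $p$-action on the subgroup). Next, I would invoke the preceding lemma to identify the LR-tableau of $E|_1^\ell$ with $\Pi|_1^\ell=[\gamma^{\ell-1},\gamma^\ell]$; equivalently, the entries $\singlebox{1}$ in its LR-tableau occupy exactly the horizontal strip $\gamma^\ell\setminus\gamma^{\ell-1}$, which is precisely the set of boxes $\singlebox{\ell}$ in the $m$-th row of $\Gamma$ when we restrict to row $m$.

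The third step is an additivity argument entirely parallel to Lemma~\ref{lemma-sum}: the LR-tableau of a direct sum is obtained by collecting, row by row, the entries in the LR-tableaux of the summands. Since the LR-tableau of $P_1^m$ contributes a single box $\singlebox{1}$ in row $m$ and nothing in any other row, while $P_0^m$ contributes no box $\singlebox{1}$ at all, the count of symbols $\singlebox{1}$ in row $m$ of the LR-tableau of $E|_1^\ell$ is exactly the multiplicity of $P_1^m$ in the decomposition of $E|_1^\ell$. Combining this with step two completes the proof.

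No step here is difficult; the only subtlety is the bookkeeping in step two, namely making sure that the skew shape $\gamma^\ell\setminus\gamma^{\ell-1}$ inside $\Gamma$ really coincides, row-by-row, with the stripe of $\singlebox{1}$'s in the LR-tableau of $E|_1^\ell$ computed via the previous lemma. Once that identification is in place, the remaining assertion is immediate from the Krull--Remak--Schmidt decomposition of objects in $\mathcal{S}_1$ and the additivity of the type map.
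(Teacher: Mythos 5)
Your argument is correct, and it does take a genuinely different route than the paper. The paper's proof passes through the two-step subfactor $E|_2^\ell$: it quotes Corollary~\ref{corollary-multiplicity} to identify the number of boxes $\singlebox{\ell_r}$ in row $m$ of the Klein tableau with the multiplicity of $T_2^{m,r}$ in $E|_2^\ell$, then applies the reduction $E|_1^\ell = E|_2^\ell\!\downarrow_1$ together with the formula $T_2^{m,r}\!\downarrow_1 = P_1^m$ or $P_1^m\oplus P_0^r$, and finally sums over the subscripts $r$. Your proof bypasses the bipicket decomposition entirely: you go straight to the one-step subfactor, observe that $E|_1^\ell\in\mathcal S_1$, read off its Klein tableau $\Pi|_1^\ell=[\gamma^{\ell-1},\gamma^\ell]$ from the restriction lemma, and then apply Proposition~\ref{proposition-KRS-multiplicity} (or equivalently Lemma~\ref{lemma-sum}) in the trivial situation where no symbols $\singlebox{2_r}$ can occur. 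Your route is simpler and, notably, handles $\ell=1$ with no extra work, whereas the paper's derivation explicitly restricts to $\ell>1$ (for $\ell=1$ the subfactor $E|_2^\ell$ is not defined) and cites \cite{lr} for the remaining case. What the paper's argument buys in exchange is an explicit view of how the LR count of $\singlebox\ell$'s arises as a sum of Klein counts $\singlebox{\ell_r}$ over $r$, which makes the refinement relationship between the two invariants visible.

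One small point of tidiness: in your step three you invoke an additivity argument for LR-tableaux ``entirely parallel to Lemma~\ref{lemma-sum}''; since you have already identified the object as lying in $\mathcal S_1$ with Klein tableau $\Pi|_1^\ell$, you can simply cite Proposition~\ref{proposition-KRS-multiplicity} directly — the multiplicity of $P_1^m$ in an object of $\mathcal S_1$ is exactly the number of boxes $\singlebox 1$ in row $m$, since the correction term involving $\singlebox{2_m}$ vanishes.
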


\begin{proof}
Let $\ell > 1$.  Since $E|^\ell_1=E|^\ell_2\down_1$
and since
$$T_2^{m,r}\down_1\;=\;\left\{\begin{array}{ll} P_1^m & \text{if}\;r=m-1 \\
                         P_1^m\oplus P_0^r & \text{if}\;r<m\end{array}\right.$$
the multiplicity of $P_1^m$ as a direct summand of $E|^\ell_1$
is the sum of the multiplicities of $T_2^{m,r}$ in $E|^\ell_2$
for $r=1,\ldots,m-1$.
\end{proof}

\subsection{Categorification}

Given an object $E\in\mathcal S$, we can interprete the entries in the 
Klein tableau
for $E$ in terms of homomorphisms in $\mathcal S$ 
and in terms of the decomposition of subfactors of $E$.

\smallskip
We restate 
Theorem~\ref{theorem-prototypes} from the introduction.
The numbers $m,r,\ell$ are chosen in such a way that
all bipickets $T_2^{m,r}\up^{\ell-2}$ and all pickets $P_\ell^m$ with $\ell\geq 2$
are included.

\begin{theorem}\label{theorem-proto2}
  For $E\in\mathcal S$ and $2\leq\ell\leq r+1\leq m$, the following numbers 
  are equal.
  \begin{enumerate}
  \item The number of boxes $\singlebox{\ell_r}$  in the $m$-th row 
    in the Klein tableau for $E$.
  \item The multiplicity of $T_2^{m,r}$ as a direct summand of 
    $$E|^\ell_2=
           \big(p^{\ell-2}A/p^\ell A\subset B/p^\ell A\big).$$
  \item The $k$-dimension of
    $$\frac{\Hom_{\mathcal S}(E,T_2^{m,r}\!\up^{\ell-2})}{%
            \Im\Hom_{\mathcal S}(E,v_2^{m,r}\!\up^{\ell-2})}.$$
  \end{enumerate}
\end{theorem}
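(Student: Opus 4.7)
The equivalence of (1) and (2) has already been established in Corollary~\ref{corollary-multiplicity}, so the plan is to prove (2)$\Leftrightarrow$(3). The strategy is to reduce the quotient in (3), which is taken in $\mathcal S$ but involves the object $T_2^{m,r}\up^{\ell-2}$ of $\mathcal S_2\up^{\ell-2}$, to an analogous quotient taken inside $\mathcal S_2$ evaluated on the $p^2$-bounded subfactor $E|_2^\ell$, and then to invoke the universal property of a sink map.

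First I would apply the adjointness of Lemma~\ref{lemma-adjoint} with $s=\ell-2$ to produce the natural isomorphism
$$\Hom_{\mathcal S}(E,T_2^{m,r}\up^{\ell-2})\;\cong\;\Hom_{\mathcal S}(E\down_{\ell-2},T_2^{m,r}),$$
and the same isomorphism applied to the source $Y^{m,r}$ of $v_2^{m,r}$ identifies the map induced by $v_2^{m,r}\up^{\ell-2}$ with the one induced by $v_2^{m,r}$. Second, since both $T_2^{m,r}$ and $Y^{m,r}$ lie in $\mathcal S_2$, every homomorphism out of $E\down_{\ell-2}=(p^{\ell-2}A\subset B)$ into them factors uniquely through the canonical surjection $E\down_{\ell-2}\onto E\down_{\ell-2}|^2=E|_2^\ell$ provided by Lemma~\ref{lemma-approximation}. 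Surjectivity of this projection makes the induced map on Hom groups injective, and the approximation property makes it surjective, so it is an isomorphism. Combined, the two natural isomorphisms descend to the quotient and give
$$\frac{\Hom_{\mathcal S}(E,T_2^{m,r}\up^{\ell-2})}{\Im\Hom_{\mathcal S}(E,v_2^{m,r}\up^{\ell-2})}\;\cong\;\frac{\Hom_{\mathcal S_2}(E|_2^\ell,T_2^{m,r})}{\Im\Hom_{\mathcal S_2}(E|_2^\ell,v_2^{m,r})}.$$

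Now everything takes place in $\mathcal S_2$, where $T_2^{m,r}$ is an indecomposable whose endomorphism ring is local with residue field $k$, and $v_2^{m,r}$ is its sink map as described in Section~\ref{section-AR-sequences}. Standard Auslander-Reiten theory then gives that for any object $M\in\mathcal S_2$ the quotient $\Hom_{\mathcal S_2}(M,T_2^{m,r})/\Im\Hom_{\mathcal S_2}(M,v_2^{m,r})$ is a $k$-vector space whose dimension equals the multiplicity of $T_2^{m,r}$ as a direct summand of $M$. Specializing to $M=E|_2^\ell$ matches (3) with (2) and closes the circle.

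The main obstacle I anticipate is the naturality bookkeeping needed so that the two Hom isomorphisms fit together to identify the image of $v_2^{m,r}\up^{\ell-2}$ with the image of $v_2^{m,r}$; this requires naturality of both the adjunction and the approximation in the second variable. A secondary point is the projective case $T_2^{2,1}=P_2^2$, where $v_2^{m}$ is the radical inclusion $P_1^2\hookrightarrow P_2^2$ rather than the end term of an AR sequence; the desired dimension identity still holds there because maps into a projective object factor through its radical exactly when they are non-split, so the quotient still counts the split maps, i.e.\ the multiplicity of the summand. As an alternative to the $\mathcal S_2$-reduction, one could instead apply Proposition~\ref{proposition-sink} to replace $v_2^{m,r}\up^{\ell-2}$ by the sink map of $T_2^{m,r}\up^{\ell-2}$ in $\mathcal S_2\up^{\ell-2}$ and perform the AR computation directly in that category, but the bridge to the multiplicity in (2) would still require the approximation step above.
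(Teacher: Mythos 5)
Your proposal follows essentially the same path as the paper's own proof: Corollary~\ref{corollary-multiplicity} for (1)$\Leftrightarrow$(2), and for (2)$\Leftrightarrow$(3) the same three ingredients — the adjunction of Lemma~\ref{lemma-adjoint}, the minimal left approximation of Lemma~\ref{lemma-approximation}, and the Auslander-Reiten defect/sink-map characterization of multiplicity in $\mathcal S_2$ — applied in the reverse order (the paper walks from the multiplicity in $E\down_{\ell-2}|^2$ out to the Hom quotient in $\mathcal S$, you walk back in). Your explicit remark about the projective case $T_2^{2,1}=P_2^2$, where $v_2^2$ is the radical inclusion rather than the epimorphism of an AR sequence, is a reasonable clarification that the paper glosses over but is not a genuine difference of method.
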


\begin{remark}
\begin{enumerate}
\item In the theorem, $v_2^{m,r}$ is the sink map for $T_2^{m,r}$ in the category 
$\mathcal S_2$ as introduced in (\ref{section-AR-sequences}).
The lifting $v_2^{m,r}\up^{\ell-2}$ may not be the sink map 
for $T_2^{m,r}\up^{\ell-2}$, but its minimal version $v$ is 
(Proposition~\ref{proposition-sink}).  In particular,
$\Im\Hom(E,v_2^{m,r}\up^{\ell-2})=\Im\Hom(E,v)$ consists of all
maps which factor over the sink map for $T_2^{m,r}\up^{\ell-2}$
in the category $\mathcal S_2\up^{\ell-2}$. 
\item
The above theorem covers all entries in the Klein tableau with the exception 
of the $\singlebox1$'s. 
Those entries occur also in the underlying LR-tableau and are dealt with
by \cite[Theorem~1]{lr}:
The multiplicity of $\singlebox \ell$ in the $m$-th row of the LR-tableau
for $E$ equals the multiplicity of $P_1^m$ as a direct summand of 
$E|^\ell_1$ and also equals the $k$-dimension of 
$$\frac{\Hom(E,P_\ell^m)}{\Im\Hom(E,u_\ell^m)}$$
where $u_1^m$ is the sink map for $P_1^m$ in the category $\mathcal S_1$
and $u_\ell^m=u_1^m\up^{\ell-1}$.
\end{enumerate}
\end{remark}

\begin{proof}
According to Corollary~\ref{corollary-multiplicity}, 
the number of boxes labelled 
$\singlebox{\ell_r}$ in row $m$ is equal to the multiplicity of $T_2^{m,r}$
as a direct summand of $E|_2^\ell=E\!\down_{\ell-2}|^2$. 
In the category $\mathcal S_2$, this multiplicity is measured as the
dimension of the contravariant defect given by the Auslander-Reiten sequence
ending at $T_2^{m,r}$.  This dimension is equal to
$$\dim_k \;\frac{\Hom(E\!\down_{\ell-2}|^2,T_2^{m,r})}{%
                   \Im\Hom(E\!\down_{\ell-2}|^2,v_2^{m,r})}.$$
Using Lemma~\ref{lemma-approximation}, this number is equal to
$$\dim_k \;\frac{\Hom(E\!\down_{\ell-2},T_2^{m,r})}{%
                   \Im\Hom(E\!\down_{\ell-2},v_2^{m,r})}.$$
Now we apply the adjoint isomorphism from Lemma~\ref{lemma-adjoint} to obtain
equality with the expression in the theorem:
$$\dim_k \;\frac{\Hom(E,T_2^{m,r}\!\up^{\ell-2})}{%
                   \Im\Hom(E,v_2^{m,r}\!\up^{\ell-2})}.$$
\end{proof}

As a consequence of the theorem, we can read off from the Klein tableau
of an embedding  the length of the module of homomorphisms into a bipicket.
The key step is the reduction to the corresponding situation for pickets:

\begin{corollary}\label{corollary-hom2bipicket}
Suppose $E\in\mathcal S$ has Klein tableau $\Pi$. For integers $1\leq r\leq m-2$, let
$$b\;=\;\#\{\;\text{symbols $\singlebox{2_u}$ in row $v$ in $\Pi$} \;|\; 
                              r+2\leq v\leq m, 1\leq u\leq r \;\}$$
Then
\begin{eqnarray*}
\len\Hom_{\mathcal S}(E,T_2^{m,r}) & = &
         b+\len\Hom_{\mathcal S}(E,P_2^{r+1}\oplus P_0^r\oplus P_1^m)\\
                            & & \qquad    -\len \Hom_{\mathcal S}(E,P_1^{r+1}).
\end{eqnarray*}
\end{corollary}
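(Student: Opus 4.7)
The plan is to construct a short exact sequence in $\mathcal S$ expressing $T_2^{m,r}$ as a quotient of a direct sum of pickets, then identify the resulting cokernel of the induced Hom sequence with $b$ by a case-by-case reduction to indecomposable summands of $E|^2$.

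First I would verify that
\[
0\to P_1^{r+1}\to P_2^{r+1}\oplus P_0^r\oplus P_1^m\to T_2^{m,r}\to 0
\]
is a short exact sequence in $\mathcal S$. On middle terms, the embedding sends a generator of $R/(p^{r+1})$ to $(1,\pi,p^{m-r-1})\in R/(p^{r+1})\oplus R/(p^r)\oplus R/(p^m)$, where $\pi$ is the canonical reduction $R/(p^{r+1})\twoheadrightarrow R/(p^r)$; the projection is $(a,b,c)\mapsto(c-p^{m-r-1}a,\,b-\pi(a))\in R/(p^m)\oplus R/(p^r)$. A direct calculation verifies compatibility with the inclusions of the $A$-parts and exactness on both $A$- and $B$-components.

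Applying $\Hom_{\mathcal S}(E,-)$, with $Y=P_2^{r+1}\oplus P_0^r\oplus P_1^m$, produces the left exact sequence
\[
0\to\Hom_{\mathcal S}(E,P_1^{r+1})\to\Hom_{\mathcal S}(E,Y)\to\Hom_{\mathcal S}(E,T_2^{m,r})\to K\to 0,
\]
so the corollary is equivalent to $\len K = b$. Both sides are additive in $E$, and by $\Hom_{\mathcal S}(E,G)=\Hom_{\mathcal S}(E|^2,G)$ for $G\in\mathcal S_2$ (Lemma~\ref{lemma-approximation}) together with the Krull--Remak--Schmidt decomposition of $E|^2$ in $\mathcal S_2$ (Theorem~\ref{ps}), it suffices to check $\len K(F)=b_F$ for each indecomposable $F\in\mathcal S_2$. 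By Corollary~\ref{corollary-multiplicity}, $b_F=1$ exactly when $F=T_2^{v,u}$ with $r+2\le v\le m$ and $1\le u\le r$, and $b_F=0$ otherwise.

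For picket summands $F$, the identification $\Hom_{\mathcal S}((A\subset B),P_\ell^{m'})=\Hom_R(B/p^\ell A,P^{m'})$ reduces all Hom computations to ordinary module theory, and a direct construction exhibits a lift of every map $F\to T_2^{m,r}$ through $Y$, so $K(F)=0=b_F$. For bipicket summands $F=T_2^{v,u}$, maps $F\to T_2^{m,r}$ are parametrized by $2\times 2$ matrices of component homomorphisms subject to one compatibility condition at the generator of $A_F$; tracking $p$-adic valuations of the matrix entries against the valuation of $p^{m-r-1}$ in the $P_1^m$-summand of $Y$ shows that the cokernel $K(F)$ has length one precisely when $r+2\le v\le m$ and $1\le u\le r$, matching $b_F$. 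The main obstacle is this bipicket-to-bipicket lift analysis: one must carefully compare the arithmetic of the lift condition with the stated constraints on $(v,u)$ to identify the one-dimensional obstruction in the nonzero case. An alternative, more conceptual route identifies $\len K(F)$ with the contravariant defect of a suitable sink map in $\mathcal S_2$ and invokes Theorem~\ref{theorem-prototypes}(3) with $\ell=2$ directly.
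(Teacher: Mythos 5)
Your approach is genuinely different from the paper's.  The paper obtains the formula by telescoping through the Auslander--Reiten sequences in $\mathcal S_2$ along the diagonals from $T_2^{m,r}$ down to $T_2^{r+2,1}$: each step replaces $\len(E,T_2^{v,u})$ by $\len(E,Y^{v,u})-\len(E,X^{v,u})$, and the omitted term $\len\Cok\Hom(E,v_2^{v,u})$ is read off as the number of boxes $\singlebox{2_u}$ in row $v$ via Theorem~\ref{theorem-prototypes}(3).  You instead package the whole telescope into a single short exact sequence
$$0\to P_1^{r+1}\to P_2^{r+1}\oplus P_0^r\oplus P_1^m\to T_2^{m,r}\to 0$$
and aim to identify $\len K(E)$ with $b$.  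The sequence does exist as you describe it (the $A$-level compatibility needs the small observation that the image of the generator of $A_Y$ under the projection is $(p-1)\cdot(p^{m-2},p^{r-1})$, a unit multiple of the generator of $A_{T_2^{m,r}}$), and the reduction to indecomposable summands of $E|^2$ via Lemma~\ref{lemma-approximation} and Theorem~\ref{ps} is sound.  What this approach would buy is an explicit picket presentation of $T_2^{m,r}$ that makes the formula transparent before any Auslander--Reiten theory is invoked.

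The gaps are in the two places you yourself flag.  First, the picket case is asserted: you need to exhibit, for each picket $F$ and each $f\colon F\to T_2^{m,r}$, a lift through $Y$; this is elementary but nonzero work and is where a sign or valuation error would bite.  Second, and more seriously, the bipicket computation is the whole content of the corollary, and it is left as ``track $p$-adic valuations.''  The ``alternative, more conceptual route'' does not close this gap: your constructed sequence is \emph{not} the Auslander--Reiten sequence ending at $T_2^{m,r}$ (for $m>3$ or $r>1$ the AR middle term involves bipickets such as $T_2^{m-1,r}$ or $T_2^{m,r-1}$, not the pure picket sum $P_2^{r+1}\oplus P_0^r\oplus P_1^m$), so $\len K(F)$ is not a single contravariant defect and Theorem~\ref{theorem-prototypes}(3) does not apply to it directly.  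To relate $K$ to the defects $\Cok\Hom(-,v_2^{v,u})$ over the rectangle $r+2\le v\le m$, $1\le u\le r$ you would have to splice your sequence out of the AR sequences --- which is exactly the paper's telescoping argument.  So either carry out the direct valuation computation of $\len\Hom(T_2^{v,u},T_2^{m,r})$ honestly, or accept that the conceptual shortcut reduces to the argument already in the paper.
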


The length of the homomorphism group into a picket $P_\ell^m$ can be read off
from the LR-tableau $\Gamma=[\gamma^0,\ldots,\gamma^e]$ of the module $E: (A\subset B)$:
$$\len\Hom_{\mathcal S}(E,P_\ell^m) \;=\;\sum_{i=1}^m (\gamma^\ell)'_i$$
(For this recall that 
$\Hom_{\mathcal S}(E,P_\ell^m)=\Hom_R(B/p^\ell A,P^m)$ as discussed in \cite{lr},
and note that $B/p^\ell A=\bigoplus_j P^{\gamma^\ell_j}$ 
and that $\len\Hom_R(P^s,P^m)=\min\{s,m\}$.)

\begin{proof}[Proof of the Corollary]
Since $\Hom(E,F)=\Hom(E|^2,F)$ for $F\in\mathcal S_2$ we may assume
that $E\in\mathcal S_2$. 
We first consider the case where 
$E$ has no summand of type $T_2^{v,u}$; then the functor $\Hom_{\mathcal S}(E,-)$
is exact when applied to the Auslander-Reiten sequence ending at $T_2^{v,u}$.

\smallskip
In the computation below 
we proceed along the diagonals in the Aus\-lan\-der-Rei\-ten quiver for $\mathcal S_2$,
going first from $T_2^{m,r}$ to $T_2^{m,1}$, then from $T_2^{m-1,r}$ to $T_2^{m-1,1}$,
etc.\ and finally from $T_2^{r+2,r}$ to $T_2^{r+2,1}$. 
In each step we replace the term $(E,T_2^{v,u})$, which is the length of the 
third term of the exact sequence
\begin{eqnarray*} 
0\to \Hom(E,X) & \to & \Hom(E,Y)\stackrel{\Hom(E,g^{v,u})}{\longrightarrow}\Hom(E,T_2^{v,u})\\
               & \to & \Cok\Hom(E,g^{v,u})\to 0
\end{eqnarray*}
given by applying the functor $\Hom(E,-)$ to the Auslander-Reiten sequence 
ending at $T_2^{v,u}$ by 
$$\len\Hom(E,Y)-\len\Hom(E,X).$$

\begin{eqnarray*}
(E,T_2^{m,r}) & = & (E,T_2^{m,r-1})+(E,T_2^{m-1,r})-(E,T_2^{m-1,r-1}) \\
 & = & (E,T_2^{m,r-2})+(E,T_2^{m-1,r})-(E,T_2^{m-1,r-2})\\
 & = & \cdots \; = \; (E,T_2^{m,1})+(E,T_2^{m-1,r})-(E,T_2^{m-1,1})\\
 & = & (E,P_1^m)+(E,T_2^{m-1,r}) -(E,P_1^{m-1}) \\
 & = & \cdots \; = \; (E,P_1^m)+(E,T_2^{r+2,r}) - (E,P_1^{r+2}) \\
 & = & (E,P_1^m)+(E,P_2^{r+1})+(E,P_0^r)+(E,T_2^{r+2,r-1})\\
 &   & \qquad -(E,T_2^{r+1,r-1}) - (E,P_1^{r+2})\\
 & = & \cdots \; = \; (E,P_1^m)+(E,P_2^{r+1})+(E,P_0^r)-(E,P_1^{r+1})
\end{eqnarray*}
Here $(E,Y)$ denotes the length $\len\Hom_{\mathcal S}(X,Y)$,
This shows the claim in case $b=0$.

\smallskip
Let us now consider an arbitrary embedding $E$.
Then for a given pair $(v,u)$, the above 
replacement of $(E,T_2^{v,u})$ by $(E,Y)-(E,X)$ omits the term $\len\Cok(E,g^{v,u})$ 
which according to Theorem~\ref{theorem-prototypes}
counts the number of boxes $\singlebox{2_u}$ in the $v$-th row in the Klein tableau for $E$.
For each pair $(v,u)$ where $r+2\leq v\leq m$ and $1\leq u\leq r$ one such omission occurs;
together they sum up to yield the extra summand $b$ in the formula in the statement of
the result.
\end{proof}

\subsection{The location of symbols in the category $\mathcal S(n)$}

The following result may help to detect the objects in an 
Auslander-Reiten quiver which have a certain entry in their Klein
tableau.  
Suppose $E\in\mathcal S(n)$ has a symbol $\singlebox{\ell_r}$ in the $m$-th row
of its Klein tableau (so $\ell,r,m,n$ satisfy 
the inequalities $2\leq \ell\leq r+1\leq m\leq n$).  
We have seen in Theorem~\ref{theorem-proto2} that there is a map 
$g:E\to Z$  where $Z=T_2^{m,r}\up^{\ell-2}$ which does not factor through
the sink map $v$ for $Z$ in $\mathcal S_2\up^{\ell-2}$.  
In the next statement we show that there is a corresponding object $C$
depending only on $Z$ and $n$, and a map $f:C\to E$ such that $gf$ does not factor
through $v$.  In this sense, $E$ is ``in between'' $C$ and $Z$.

\begin{theorem}\label{theorem-factor}
Suppose that the integers $\ell, r,m,n$ satisfy
$$2\leq \ell \leq r+1 \leq m \leq n.$$
Let $C\in\mathcal S(n)$ be defined as follows:
The picket or bipicket $Z=T_2^{m,r}\up^{\ell-2}$ 
is either projective (i.e.\ $Z=P_\ell^\ell$) in $\mathcal S_2\up^{\ell-2}$
with sink map $P_{\ell-1}^\ell\stackrel v\to P_\ell^\ell$
in which case we put $C=P_n^n$;
or else $Z$ occurs as end term of an Auslander-Reiten sequence
$0\to X\to Y\stackrel v\to Z\to 0$ in $\mathcal S_2\up^{\ell-2}$, 
in which case we put $C=\tau_{\mathcal S(n)}^{-1} X$.
\begin{enumerate}
\item The $R$-module $$\frac{\Hom_{\mathcal S}(C,Z)}{\Im\Hom_{\mathcal S}(C,v)}$$
      is a one dimensional $k$-vector space.
\item For $E\in \mathcal S(n)$, 
      the map given by composition,
      $$\frac{\Hom(E,Z)}{\Im\Hom(E,v)} \;\times\; \Hom(C,E) \;\longrightarrow
      \; \frac{\Hom(C,Z)}{\Im\Hom(C,v)}$$
      is left non-degenerate.
\end{enumerate}
\end{theorem}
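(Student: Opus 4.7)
The strategy is to reduce both parts to computations in the Krull-Remak-Schmidt category $\mathcal S_2$ via the adjunction of Lemma~\ref{lemma-adjoint} combined with the approximation of Lemma~\ref{lemma-approximation}. For any $D \in \mathcal S$ one obtains a natural isomorphism
\[
\frac{\Hom_{\mathcal S}(D, Z)}{\Im\Hom_{\mathcal S}(D, v)}
\;\cong\;
\frac{\Hom_{\mathcal S_2}(D|_2^\ell, T_2^{m,r})}{\Im\Hom_{\mathcal S_2}(D|_2^\ell, v_2^{m,r})},
\]
whose right side is the contravariant defect at $D|_2^\ell$ of the Auslander-Reiten sequence in $\mathcal S_2$ ending at $T_2^{m,r}$; it equals the multiplicity of $T_2^{m,r}$ as a direct summand of $D|_2^\ell$, and by Corollary~\ref{corollary-multiplicity} this in turn is the number of $\singlebox{\ell_r}$-symbols in row $m$ of $\Pi(D)$. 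In particular, $[g] \neq 0$ in $\Hom(E, Z)/\Im\Hom(E, v)$ corresponds, by the sink map property in $\mathcal S_2$, to a split projection $\bar g \colon E|_2^\ell \twoheadrightarrow T_2^{m,r}$ onto a specified $T_2^{m,r}$-summand of $E|_2^\ell$.

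Part (1) applied with $D = C$ becomes the combinatorial statement that $\Pi(C)$ contains exactly one $\singlebox{\ell_r}$ in row $m$. In case (a) this is immediate: $\Pi(P_n^n)$ carries a unique symbol $\singlebox{k_{k-1}}$ in each row $k$, and the hypotheses force $r = \ell - 1$ and $m = \ell$. In cases (b)--(d) I would apply the explicit description of the inverse Auslander-Reiten translation in $\mathcal S(n)$ from \cite{bounded} to determine $C = \tau_{\mathcal S(n)}^{-1} X$ for the picket or bipicket $X$ specified in Proposition~\ref{proposition-sink}, then compute $\Pi(C)$ following the recipe of Section~\ref{subsection-Klein} and verify directly that a single $\singlebox{\ell_r}$ occurs in row $m$ in each case.

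For part (2) the assertion is equivalent to injectivity of the $k$-linear map $\Hom(E, Z)/\Im\Hom(E, v) \to \Hom_k(\Hom(C, E), k)$ given by $[g] \mapsto (f \mapsto [g \circ f])$. By part (1) there is a canonical split projection $\pi \colon C|_2^\ell \twoheadrightarrow T_2^{m,r}$ whose class generates the one-dimensional quotient $\Hom_{\mathcal S_2}(C|_2^\ell, T_2^{m,r})/\Im\Hom_{\mathcal S_2}(C|_2^\ell, v_2^{m,r})$. Choosing a section $\sigma$ of $\bar g$, the composite $\sigma \circ \pi \colon C|_2^\ell \to E|_2^\ell$ satisfies $\bar g \circ \sigma \circ \pi = \pi$, whose class is nonzero. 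The remaining task is to realize this $\mathcal S_2$-morphism as $f|_2^\ell$ for some $f \in \Hom_{\mathcal S(n)}(C, E)$. In case (a) the projectivity of $P_n^n$ in $\mathcal S(n)$ allows a direct construction: maps $P_n^n \to E = (A\subset B)$ correspond to elements of $A$, and one picks $a \in A$ whose image under $g$ generates the target $\mathbb Z/(p^\ell)$ of $Z = P_\ell^\ell$, which immediately yields $[g\circ f]\neq 0$.

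The main obstacle is that in cases (b)--(d) the functor $(-)|_2^\ell \colon \mathcal S(n) \to \mathcal S_2$ is not surjective on Hom-sets, so $\sigma \circ \pi$ cannot be lifted verbatim to a map in $\mathcal S(n)$. I expect to address this by exploiting the defining property $C = \tau_{\mathcal S(n)}^{-1} X$ together with the almost split sequence $0 \to X \to Y' \to C \to 0$ in $\mathcal S(n)$, using AR-theoretic techniques to transport the splitting data from $\mathcal S_2$ back to a genuine morphism in $\mathcal S(n)$. The technical heart will be a case-by-case identification of $\Hom_{\mathcal S(n)}(C, E)$ with enough data to witness each $T_2^{m,r}$-summand of $E|_2^\ell$, parallel to the computation of $\Pi(C)$ in part (1).
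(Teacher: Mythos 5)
Your reduction of part (1) to the assertion that $\Pi(C)$ contains exactly one $\singlebox{\ell_r}$ in row $m$ is exactly what the paper does, and your plan to compute $C=\tau_{\mathcal S(n)}^{-1}X$ case by case and read off the tableau is the right path (the paper cites \cite{rs-translation} rather than \cite{bounded} for the translation formula, and splits case (d) into two sub-cases). So far so good, modulo actually carrying out the five tableau computations.

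Part (2) is where the proposal has a genuine gap. You have correctly identified the crux --- that you need, for each $g:E\to Z$ with $[g]\neq 0$, a map $f:C\to E$ in $\mathcal S(n)$ with $[gf]\neq 0$, and you observe yourself that the morphism $\sigma\circ\pi$ constructed in $\mathcal S_2$ \emph{cannot} in general be lifted along $(-)|_2^\ell$ to $\mathcal S(n)$. But at that point the argument stops: ``I expect to address this by \dots using AR-theoretic techniques to transport the splitting data'' is a description of a hoped-for strategy, not a proof, and the lifting strategy you sketched is precisely the one you say does not work. The paper sidesteps the lifting problem entirely. It fixes the AR-sequence $\mathcal A: 0\to X\to Y\stackrel{v}{\to} Z\to 0$ in $\mathcal S_2\!\up^{\ell-2}$ and the AR-sequence $\mathcal E: 0\to X\stackrel{f}{\to}B\stackrel{g}{\to}C\to 0$ in $\mathcal S(n)$ \emph{starting} at the same object $X$; since $\mathcal A$ is nonsplit it receives maps $h':B\to Y$, $h:C\to Z$ from $\mathcal E$. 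Then, given $p:E\to Z$ not factoring through $v$, one forms the pullback sequence $p\mathcal A$, observes it is nonsplit, hence its inclusion $s:X\to L$ factors through the source map $f$ of $\mathcal E$, giving $q':B\to L$ and thus the cokernel map $q:C\to E$. A short diagram chase yields $pq=vz+h$ for some $z$, and since $h$ does not factor through $v$, neither does $pq$. This is a direct AR-theoretic construction of the witness $q$ and avoids any attempt to lift $\mathcal S_2$-morphisms; without this (or an equivalent) argument, your part (2) is incomplete.
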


The corresponding result about the entries in the LR-tableau is \cite[Proposition~1]{lr}.

\begin{proof}
For the first statement we verify in each of five cases that the Klein tableau
for $C$ contains exactly one symbol $\singlebox{\ell_r}$ in the $m$-th row.
This implies by Theorem~\ref{theorem-proto2} that the $k$-vector space
$\frac{\Hom_{\mathcal S}(C,Z)}{\Im\Hom_{\mathcal S}(C,v)}$ has dimension one.

\begin{enumerate}
\item[(a)] In the first case where $\ell=r+1=m$, the module 
$$Z=T_2^{m,r}\up^{\ell-2}=P_\ell^\ell$$
is the indecomposable projective object in $\mathcal S_2\up^{\ell-2}$;
the corresponding module  $C=P_n^n$ has the following Klein tableau:

\begin{center}
\setcounter{boxsize}{3}
  \begin{picture}(20,15)
    \put(0,3){$P_n^n:$}
    \put(10,0){\begin{picture}(3,12)\put(0,0){\numbox n} \put(0,6)\vdotbox \put(0,9){\numbox 1}
      \end{picture}}
  \end{picture}
\end{center}
In particular, the $m$-th row has entry $\singlebox{\ell_r}$ where $\ell=m$ and $r=m-1$.

\item[(b)] In this case we assume that $\ell<r+1=m$, so we are dealing with a nonprojective
picket $Z=T_2^{m,r}\up^{\ell-2}= P_\ell^m$.
We have seen in (\ref{section-s2up}) that 
$\tau_{\mathcal S_2\up^{\ell-2}} P_\ell^m=P_{\ell-2}^{m-2}$.
Here and in the remaining cases we use 
\cite[Theorem~5.2, see also Lemma~1.2~(3)]{rs-translation} 
to compute the inverse of the 
Auslander-Reiten translation in $\mathcal S(n)$ as the kernel of the minimal
epimorphism representing the embedding:
\begin{eqnarray*}
 C & = & \tau_{\mathcal S(n)}^{-1} P_{\ell-2}^{m-2} \\
   & = & \ker\mepi (P^{\ell-2}\to P^{m-2}, y\mapsto p^{m-\ell}y)\\
   & = & \ker (P^n\oplus P^{\ell-2} \to P^{m-2},(x,y)\mapsto x+p^{m-\ell}y) \\
   & = & (P^{n+\ell-m}\to P^n\oplus P^{\ell-2}, u\mapsto (p^{m-\ell}u,-u))\\
   & = & ( \; ((p^{m-\ell},-1))\subset P^n\oplus P^{\ell-2}\;)
\end{eqnarray*}
This embedding has the following Klein tableau:

\begin{center}
\setcounter{boxsize}{3}
  \begin{picture}(20,33)
    \put(-10,10){$C:$}
    \put(10,0){\begin{picture}(3,12)\put(0,0){\numbox t} 
        \put(0,6)\vdotbox 
        \put(0,9){\numbox \ell}
        \put(0,12){\numbox{\ell'}}
        \put(3,21){\numbox{\ell''}}
        \put(3,27)\vdotbox
        \put(3,30){\numbox 1}
        \put(-12,23){${}_{m-2}\left\{\makebox(0,9){}\right.$}
        \put(0,15){\rectbox6}
      \end{picture}}
  \end{picture}
\end{center}
Here we abbreviate $\ell'=\ell-1$, $\ell''=\ell-2$ and put $t=n+\ell-m$.
We see that the $m$-th row consists of the symbol $\singlebox{\ell_r}$ where $r=m-1$.

\item[(c)] Here we consider the case where $\ell=r+1<m$. 
Then the translate of the bipicket $Z=T_2^{m,r}\up^{\ell-2}$ is the picket 
$\tau_{\mathcal S_2\up^{\ell-2}}Z = P_{\ell-1}^{m-1}$.
As above, $C=\tau_{\mathcal S(n)}^{-1}P_{\ell-1}^{m-1}$ is computed as 
$$C \;=\; (\; ((p^{m-\ell},-1))\subset P^n\oplus P^{\ell-1}\;);$$
its Klein tableau
\begin{center}
\setcounter{boxsize}{3}
  \begin{picture}(20,33)
    \put(-10,10){$C:$}
    \put(10,0){\begin{picture}(3,12)\put(0,0){\numbox t} 
        \put(0,6)\vdotbox 
        \put(0,9){\numbox \ell}
        \put(3,18){\numbox{\ell'}}
        \put(3,24)\vdotbox
        \put(3,27){\numbox 1}
        \put(-12,20){${}_{m-1}\left\{\makebox(0,9){}\right.$}
        \put(0,12){\rectbox6}
      \end{picture}}
  \end{picture}
\end{center}
has the symbol $\singlebox{\ell_r}$ where $r=\ell-1$ in the $m$-th row ($t=n+\ell-m$).

\item[(d$_1$)] We assume $\ell+1=r+1<m$. 
The translate of the bipicket $Z=T_2^{m,r}\up^{\ell-2}$ is the bipicket
$\tau_{\mathcal S_2\up^{\ell-2}} Z = T_2^{m-1,r-1}\up^{\ell-2}$.
Note that here --- unlike in the next case  --- 
the factor of the embbedding defining this module is cyclic.  We compute
\begin{eqnarray*}
\qquad C & = & \tau_{\mathcal S(n)}^{-1} T_2^{m-1,r-1}\up^{\ell-2} \\
  & = & \ker\mepi ( P^\ell \oplus P^{\ell-2} \to P^{m-1}\oplus P^{r-1},\\
  &   &  \qquad      (y,z)\mapsto (p^{m-1-\ell}y,p^{r-\ell}y+p^{r-\ell+1}z)) \\
  & = & \ker\mepi ( P^\ell \oplus P^{\ell-2} \to P^{m-1}\oplus P^{r-1},\\
  &   &  \qquad      (y,z)\mapsto (p^{m-1-\ell}y,y+pz)) \\
  & = & \ker ( P^n\oplus P^\ell \oplus P^{\ell-2}\to P^{m-1}\oplus P^{r-1}, \\
  &   & \qquad       (x,y,z)\mapsto (x+p^{m-1-\ell}y, y+pz)) \\
  & = & (P^{n+\ell-m} \to P^n\oplus P^\ell \oplus P^{\ell-2},
                u \mapsto (p^{m-\ell}u,-pu, u) ) \\
  & = & (\;((p^{m-\ell},-p,1))\subset P^n\oplus P^\ell \oplus P^{\ell-2}\;)
\end{eqnarray*}
The Klein tableau is as follows:
\begin{center}
\setcounter{boxsize}{3}
  \begin{picture}(20,36)
    \put(-10,10){$C:$}
    \put(10,0){\begin{picture}(3,12)\put(0,0){\numbox t} 
        \put(0,6)\vdotbox 
        \put(0,9){\numbox \ell}
        \put(3,18){\numbox{\ell'}}
        \put(6,24){\numbox{\ell''}}
        \put(6,30)\vdotbox
        \put(6,33){\numbox 1}
        \put(-12,23){${}_{m-1}\left\{\makebox(0,13){}\right.$}
        \put(0,12){\rectboxL8}
        \put(3,18){\rectboxJ6}
        \put(10,26){$\left.\makebox(0,10){}\right\}{\!}_\ell$}
      \end{picture}}
  \end{picture}
\end{center}
Namely, the sequence of radical layers of the elements $p^iu$ in the total space
is $1, 2, \ldots, \ell-2,\ell,m,m+1,\ldots,t=n+\ell-m$.
So the $m$-th row consists of the symbol $\singlebox{\ell_r}$ where $r=\ell$.

\item[(d$_2$)] Finally we consider the case where $\ell+1<r+1<m$.
The translate of $Z=T_2^{m,r}\up^{\ell-2}$ is the bipicket 
$\tau_{\mathcal S_2\up^{\ell-2}} Z = T_2^{m-1,r-1}\up^{\ell-2}$.  We compute
\begin{eqnarray*}
\qquad \qquad C & = & \tau_{\mathcal S(n)}^{-1} T_2^{m-1,r-1}\up^{\ell-2} \\
  & = & \ker\mepi (P^\ell\oplus P^{\ell-2} \to P^{m-1} \oplus P^{r-1},\\
  &   & \qquad  (y,z)\mapsto (p^{m-1-\ell}y,p^{r-\ell}y+p^{r-\ell+1}z)) \\
  & = & \ker (P^n\oplus P^n \oplus P^\ell \oplus P^{\ell-2}\to P^{m-1}\oplus P^{r-1},\\
  &   & \qquad  (w,x,y,z) \mapsto \\
  &   & \qquad (w+p^{m-1-r}x+p^{m-1-\ell}y,x+p^{r-\ell}y+p^{r-\ell+1}z))\\
  & = & (P^{n-m+\ell}\oplus P^{n-r+\ell}\to P^n\oplus P^n\oplus P^\ell\oplus P^{\ell-2},\\
  &   &  \qquad (u,v)\mapsto (p^{m-\ell}u,p^{r-\ell}v, -pu-v,u))
\end{eqnarray*}
The Klein tableau is as follows:
\begin{center}
\setcounter{boxsize}{3}
  \begin{picture}(20,45)
    \put(-10,10){$C:$}
    \put(10,0){\begin{picture}(3,12)
        \put(0,0){\numbox t} 
        \put(3,0){\numbox {t'}}
        \put(0,6)\vdotbox 
        \put(3,3){\rectbox5}
        \put(3,12)\vdotboxI
        \put(0,9){\numbox{\ell^*}}
        \put(0,12){\numbox \ell}
        \put(0,15){\rectboxL{10}}
        \put(3,18){\numbox{\ell^*}}
        \put(3,21){\numbox{\ell'}}
        \put(3,24){\rectboxJ7}
        \put(6,27){\numbox{\ell}}
        \put(6,30){\numbox{\ell'}}
        \put(6,33){\numbox{\ell''}}
        \put(9,33){\numbox{\ell''}}
        \put(6,39)\vdotbox
        \put(9,39)\vdotbox
        \put(6,42){\numbox 1}
        \put(9,42){\numbox 1}
        \put(-12,29){${}_{m-1}\left\{\makebox(0,15){}\right.$}
        \put(13,34){$\left.\makebox(0,12){}\right\}{\!}_{r-1}$}
      \end{picture}}
  \end{picture}
\end{center}
Here $\ell^*=\ell+1$, $t=n-m+\ell$ and $t'=n-r+\ell$.
We determine the two symbols corresponding to the labels $\ell$. 
Write $C$ as the embedding $(A\subset B)$ and verify that the types 
of $B/p^{\ell-2}A$, $B/p^{\ell-1}A$ and $B/p^\ell A$ are 
$(m-1,r-1,\ell-2,\ell-2$); $(m-1, r,\ell-1,\ell-2)$; and $(m,r,\ell,\ell-2)$,
respectively.  
Thus, the two labels $\ell$ occur in rows $m$ and $\ell$, while the two
labels $\ell-1$ occur in rows $r$ and $\ell-1$. Since $r>\ell$, 
the subscript $r$ remains for the label $\ell$ in row $m$.  
So the two symbols  $\singlebox{\ell_r}$
and $\singlebox{\ell_{\ell'}}$ occur in rows $m$ and $\ell$, respectively.
\end{enumerate}

For the proof of the second statement, we first consider the case where $Z=P_\ell^\ell$
is projective in $\mathcal S_2\up^{\ell-2}$. 
Let $\pi:P^n_n\to P_\ell^\ell$ be the canonical map.
Suppose a map $f:E\to P_\ell^\ell$ with $E\in\mathcal S(n)$ 
does not factor through the sink map
$P_{\ell-1}^\ell\to P_\ell^\ell$; then $f$ is an epimorphism.
Recall that $P_n^n$ is a projective object in the abelian category $\mathcal H(n)$
of all maps between $R/(p^n)$-modules; and that $\mathcal S(n)\subset \mathcal H(n)$
is a full subcategory.  It follows that $\pi$ factors through the epimorphism $f$. 

\smallskip
For the case where $Z$ is nonprojective, we adapt the second part of the proof of 
\cite[Proposition~2]{lr}:
Suppose that $E\in\mathcal S(n)$ is given.
Let $\mathcal A:0\to X\stackrel u\to Y\stackrel v\to Z\to 0$ be 
the Auslander-Reiten sequence in $\mathcal S_2\up^{\ell-2}$ ending at $Z$,
as given in Proposition~\ref{proposition-sink}. Let 
$$ \mathcal E:\quad 0\to X\stackrel f\to B\stackrel g\to C\to 0$$
be the Auslander-Reiten sequence in $\mathcal S(n)$ starting at $X$,
its end term is $C$. 
Since $\mathcal A$ is nonsplit, there are maps 
$h':B\to Y$, $h:C\to Z$ which make the upper part
of the following diagram commutative.
$$
\begin{CD}
\mathcal E:\quad @.  0 @>>> X @>f>> B @>g>> C @>>> 0 \\
@. @.  @|   @V{h'}VV  @VV{h}V   \\
\mathcal A:\quad @.  0 @>>> X @>u>> Y @>v>> Z @>>> 0\\
@.  @.  @|   @A{p'}AA @AA{p}A \\
p\mathcal A:\quad @. 0 @>>> X @>{s}>> L @>{t}>> E @>>> 0 \\
\end{CD}
$$
In order to show that the bilinear form given by composition 
$$\frac{\Hom(E,Z)}{\Im\Hom(E,v)}\times \Hom(C,E)\longrightarrow
  \frac{\Hom(C,Z)}{\Im\Hom(C,v)}$$
is left non-degenerate,
let $p:E\to Z$ be a map which does not factor through $v$. 
We will construct $q:C\to E$ such that $pq$ does not factor through $v$.
Since $p$ does not factor through $v$, the induced sequence at the bottom 
of the above diagram does not split. Hence the map $s$ factors through $f$:
There is a map $q':B\to L$ such that $s=q'f$.  Let $q:C\to E$ be the cokernel map,
so $qg=tq'$.  Then $pqg=ptq'=vp'q'$.  Since $p'q'f=p's=u=h'f$,
there exists $z:C\to Y$ such that $zg=p'q'-h'$.
So $pqg=vp'q'=v(zg+h')=(vz+h)g$ and since $g$ is onto,
$pq=vz +h$.  Since $\mathcal E$ is not split exact, $h$ does not factor through
$v$, and hence  $pq$ does not factor through $v$.
\end{proof}

\subsection{Examples}\label{subsection-S5}

\def\GammaFive{\multiput{\ssq} at 1.9 -.1  1.9 .1 /  
\multiput{\ssq} at 3.9 -.4  3.9 -.2  3.9 0  3.9 .2  3.9 .4 /
\put{\num1} at 4 0
\put{\num2} at 4 -.2
\put{\num3} at 4 -.4
\multiput{\ssq} at 5.9 -.2  5.9 0  5.9 .2 /
\put{\num1} at 6 .2
\put{\num2} at 6 0
\put{\num3} at 6 -.2
\multiput{\ssq} at 7.9 -.2  7.9 0  7.9 .2 /
\multiput{\ssq} at 9.9 -.4  9.9 -.2  9.9 0  9.9 .2  9.9 .4 /
\put{\num1} at 10 -.2
\put{\num2} at 10 -.4
\multiput{\ssq} at 11.9 -.1  11.9 .1 /
\put{\num1} at 12 .1
\put{\num2} at 12 -.1 
\multiput{\ssq} at .8 1.3  .8 1.5  .8 1.7  1 1.7 /
\put{\num1} at 1.1 1.7
\put{\num2} at .9 1.3
\multiput{\ssq} at  2.8 1.1  2.8 1.3  2.8 1.5  2.8 1.7  2.8 1.9  3 1.7  3 1.9 /
\put{\num1} at 3.1 1.7
\put{\num2} at 2.9 1.3
\put{\num3} at 2.9 1.1
\multiput{\ssq} at 4.8 1.1  4.8 1.3  4.8 1.5  4.8 1.7  4.8 1.9  
                5 1.9  5 1.5  5 1.7 /
\multiput{\num1} at 4.9 1.5  5.1 1.9 /
\multiput{\num2} at 4.9 1.3  5.1 1.7 /
\put{\num4} at 4.9 1.1
\put{\num3} at 5.1 1.5
\multiput{\ssq} at 6.8 1.2  6.8 1.4  6.8 1.6  6.8 1.8  7 1.6  7 1.8 /
\put{\num1} at 7.1 1.8
\put{\num2} at 7.1 1.6
\put{\num3} at 6.9 1.2
\multiput{\ssq} at  9 1.5  9 1.7  9 1.9  
                8.8 1.1  8.8 1.3  8.8 1.5  8.8 1.7  8.8 1.9 /
\put{\num1} at 9.1 1.5
\put{\num2} at 8.9 1.1 
\multiput{\ssq} at 10.8 1.1  10.8 1.3  10.8 1.5  10.8 1.7  10.8 1.9  
                                11 1.7  11 1.9 /
\multiput{\num1} at 11.1 1.9  10.9 1.3 /
\put{\num2} at 11.1 1.7 
\put{\num3} at 10.9 1.1 
\multiput{\ssq} at 12.8 1.3  12.8 1.5  12.8 1.7  13 1.7 /
\put{\num1} at 13.1 1.7
\put{\num2} at 12.9 1.3
%
%
\multiput{\ssq} at .8 2.2  .8 2.4  .8 2.6  .8 2.8  .8 3  1 3 /
\put{\num1} at 1.1 3
\put{\num2} at .9 2.4
\put{\num3} at .9 2.2
\multiput{\ssq} at 1.7 2.6  1.7 2.8  1.7 3  1.7 3.2  1.7 3.4  
                1.9 3.4 1.9 3  1.9 3.2  2.1 3.4 /
\multiput{\num1} at 2 3.2  2.2 3.4 /
\multiput{\num2} at 2 3  1.8 2.8 /
\put{\num3} at 1.8 2.6
\multiput{\ssq} at 2.9 2.4  2.9 2.6  2.9 2.8 /
\put{\num1} at 3 2.6
\put{\num2} at 3 2.4
\multiput{\ssq} at 3.7 2.6  3.7 2.8  3.7 3  3.7 3.2  3.7 3.4  
                3.9 3.4  3.9 3  3.9 3.2   4.1 3.4  4.1 3.2 /
\multiput{\num1} at 4.2 3.4  4 3.2 /
\multiput{\num2} at 4.2 3.2  3.8 2.8 /
\put{\num3} at 4 3
\put{\num4} at 3.8 2.6
\multiput{\ssq} at 4.8 2.2  4.8 2.4  4.8 2.6  4.8 2.8  4.8 3  5 2.8  5 3 /
\put{\num1} at 5.1 3
\put{\num2} at 5.1 2.8
\put{\num3} at 4.9 2.4
\put{\num4} at 4.9 2.2
\multiput{\ssq} at 5.7 2.6  5.7 2.8  5.7 3  5.7 3.2  5.7 3.4  
                5.9 2.8  5.9 3  5.9 3.2  5.9 3.4
                6.1 3.4  6.1 3.2 /
\multiput{\num1} at 6.2 3.4  6.0 3 /
\multiput{\num2} at 6.2 3.2  5.8 2.8 /
\put{\num3} at 6.0 2.8 
\put{\num4} at 5.8 2.6
\multiput{\ssq} at 6.9 2.3  6.9 2.5  6.9 2.7  6.9 2.9 /
\put{\num1} at 7 2.5
\put{\num2} at 7 2.3
\multiput{\ssq} at 7.7 2.6  7.7 2.8  7.7 3  7.7 3.2  7.7 3.4  
                7.9 2.8  7.9 3  7.9 3.2  7.9 3.4   8.1 3.2  8.1 3.4 /
\multiput{\num1} at 8.2 3.4  8 3 /
\multiput{\num2} at 8.2 3.2  7.8 2.6 /
\put{\num3} at 8 2.8
\multiput{\ssq} at 8.8 2.2  8.8 2.4  8.8 2.6  8.8 2.8  8.8 3  9 2.8  9 3  /
\put{\num1} at 9.1 3
\put{\num2} at 9.1 2.8
\put{\num3} at 8.9 2.2
\multiput{\ssq} at 9.7 2.6  9.7 2.8  9.7 3  9.7 3.2  9.7 3.4 
                9.9 3  9.9 3.2  9.9 3.4  10.1 3.2  10.1 3.4 /
\multiput{\num1} at 10.2 3.4  10 3 /
\put{\num2} at 10.2 3.2
\put{\num3} at 9.8 2.6
\multiput{\ssq} at 10.9 2.4  10.9 2.6  10.9 2.8  /
\put{\num1} at 11 2.4
\multiput{\ssq} at 11.7 2.6  11.7 2.8  11.7 3  11.7 3.2  11.7 3.4  
                11.9 3.4  11.9 3  11.9 3.2       12.1 3.4 /
\multiput{\num1} at 12.2 3.4  11.8 2.8 /
\put{\num2} at 12 3
\put{\num3} at 11.8 2.6
\multiput{\ssq} at 12.8 2.2  12.8 2.4  12.8 2.6  12.8 2.8  12.8 3  13 3 /
\put{\num1} at 13.1 3
\put{\num2} at 12.9 2.4
\put{\num3} at 12.9 2.2
\multiput{\ssq} at .8 3.6  .8 3.8  .8 4  .8 4.2  .8 4.4  1 4  1 4.4  1 4.2 /
\multiput{\num1} at .9 3.8  1.1 4.2 /
\put{\num2} at 1.1 4 
\put{\num3} at .9 3.6
\multiput{\ssq} at  2.7 3.6  2.7 3.8  2.7 4  2.7 4.2  2.7 4.4  
                2.9 4.4  2.9 4  2.9 4.2      3.1 4.4 /
\multiput{\num1} at 3 4.4  3.2 4.4 /
\multiput{\num2} at 3 4.2  2.8 3.8 /
\put{\num3} at 3 4 
\put{\num4} at 2.8 3.6
\multiput{\ssq} at 4.8 3.7  4.8 3.9  4.8 4.1  4.8 4.3  5 4.3  5 4.1 /
\put{\num1} at 5.1 4.1
\put{\num2} at 4.9 3.7
\multiput{\ssq} at  6.7 3.6  6.7 3.8  6.7 4  6.7 4.2  6.7 4.4  
                6.9 3.6  6.9 3.8  6.9 4  6.9 4.2  6.9 4.4   7.1 4.2  7.1 4.4 /
\multiput{\num1} at 7.2 4.4  7 4 /
\multiput{\num2} at 7.2 4.2  6.8 3.6 /
\put{\num3} at 7 3.8
\put{\num4} at 7 3.6
\multiput{\ssq} at   9 4.3  9 4.1  8.8 3.7  8.8 3.9  8.8 4.1  8.8 4.3 /
\multiput{\num1} at 9.1 4.3  8.9 3.9 /
\put{\num2} at 9.1 4.1
\put{\num3} at 8.9 3.7
\multiput{\ssq} at    10.7 3.6  10.7 3.8  10.7 4  10.7 4.2  10.7 4.4  
                10.9 4.4   10.9 4  10.9 4.2   11.1 4.4 /
\put{\num1} at 11.2 4.4 
\put{\num2} at 11 4
\put{\num3} at 10.8 3.6
\multiput{\ssq} at 12.8 3.6  12.8 3.8  12.8 4  12.8 4.2  12.8 4.4  13 4  13 4.4  13 4.2 /
\multiput{\num1} at 12.9 3.8  13.1 4.2 /
\put{\num2} at 13.1 4 
\put{\num3} at 12.9 3.6
\multiput{\ssq} at 1.8 4.6  1.8 4.8  1.8 5  1.8 5.2  1.8 5.4  
                2 5.4  2 5  2 5.2 /
\multiput{\num1} at 2.1 5.4  1.9 4.8 /
\put{\num2} at 2.1 5.2
\put{\num3} at 2.1 5
\put{\num4} at 1.9 4.6
\multiput{\ssq} at 3.8 4.7  3.8 4.9  3.8 5.1  3.8 5.3  4 5.3 /
\put{\num1} at 4.1 5.3
\put{\num2} at 3.9 4.7
\multiput{\ssq} at 5.8 4.6  5.8 4.8  5.8 5  5.8 5.2  5.8 5.4  6 5.2  6 5.4 /
\put{\num1} at 6.1 5.2 
\put{\num2} at 5.9 4.6
\multiput{\ssq} at  7.8 4.6  7.8 4.8  7.8 5  7.8 5.2  7.8 5.4  8 5.2  8 5.4 /
\multiput{\num1} at 8.1 5.4  7.9 5 /
\put{\num2} at 8.1 5.2
\put{\num3} at 7.9 4.8
\put{\num4} at 7.9 4.6
\multiput{\ssq} at  9.8 4.7  9.8 4.9  9.8 5.1  9.8 5.3  10 5.3 /
\put{\num1} at 10.1 5.3
\put{\num2} at 9.9 4.9
\put{\num3} at 9.9 4.7
\multiput{\ssq} at 11.8 4.6  11.8 4.8  11.8 5  11.8 5.2  11.8 5.4  
                12 5.4  12 5  12 5.2 /
\put{\num1} at 12.1 5.2
\put{\num2} at 12.1 5
\put{\num3} at 11.9 4.6
\multiput{\ssq} at .8 5.6  .8 5.8  .8 6  .8 6.2  .8 6.4  1 6.4  1 6  1 6.2 /
\put{\num1} at 1.1 6.4
\put{\num2} at 1.1 6.2
\put{\num3} at 1.1 6
\put{\num4} at .9 5.6
\multiput{\ssq} at  2.9 5.7  2.9 5.9  2.9 6.1  2.9 6.3 /
\put{\num1} at 3 5.7
\multiput{\ssq} at  4.8 5.6  4.8 5.8  4.8 6  4.8 6.2  4.8 6.4  5 6.4 /
\put{\num1} at 5.1 6.4
\put{\num2} at 4.9 5.6
\multiput{\ssq} at  6.9 5.9  6.9 6.1 /
\put{\num1} at 7 5.9
\multiput{\ssq} at  8.8 5.6  8.8 5.8  8.8 6  8.8 6.2  8.8 6.4  9 6.4 /
\put{\num1} at 9.1 6.4
\put{\num2} at 8.9 6
\put{\num3} at 8.9 5.8
\put{\num4} at 8.9 5.6
\multiput{\ssq} at 10.9 5.7  10.9 5.9  10.9 6.1  10.9 6.3 /
\put{\num1} at 11 6.1
\put{\num2} at 11 5.9
\put{\num3} at 11 5.7
\multiput{\ssq} at 12.8 5.6  12.8 5.8  12.8 6  12.8 6.2  12.8 6.4  13 6.4  13 6  13 6.2 /
\put{\num1} at 13.1 6.4
\put{\num2} at 13.1 6.2
\put{\num3} at 13.1 6
\put{\num4} at 12.9 5.6
\multiput{\ssq} at   1.9 6.7  1.9 6.9  1.9 7.1  1.9 7.3 /
\multiput{\ssq} at    2.9 7.6  2.9 7.8  2.9 8  2.9 8.2  2.9 8.4 /
\multiput{\ssq} at  3.9 6.6  3.9 6.8  3.9 7  3.9 7.2  3.9 7.4 /
\put{\num1} at 4 6.6
\multiput{\ssq} at  5.9 7 /
\put{\num1} at 6 7
\multiput{\ssq} at   7.9 7 /
\multiput{\ssq} at  9.9 6.6  9.9 6.8  9.9 7  9.9 7.2  9.9 7.4 /
\put{\num1} at 10 7.2 
\put{\num2} at 10 7
\put{\num3} at 10 6.8
\put{\num4} at 10 6.6
\multiput{\ssq} at  10.9 7.6  10.9 7.8  10.9 8  10.9 8.2  10.9 8.4 /
\put{\num1} at 11 8.4
\put{\num2} at 11 8.2
\put{\num3} at 11 8
\put{\num4} at 11 7.8
\put{\num5} at 11 7.6
\multiput{\ssq} at  11.9 6.7  11.9 6.9  11.9 7.1  11.9 7.3 /
\put{\num1} at 12 7.3
\put{\num2} at 12 7.1
\put{\num3} at 12 6.9
\put{\num4} at 12 6.7
\arr{1.3 1.05} {1.7 0.45} 
\arr{2.3 0.45} {2.7 1.05} 
\arr{3.3 1.05} {3.7 0.45} 
\arr{4.3 0.45} {4.7 1.05} 
\arr{5.3 1.05} {5.7 0.45} 
\arr{6.3 0.45} {6.7 1.05} 
\arr{7.3 1.05} {7.7 0.45} 
\arr{8.3 0.45} {8.7 1.05} 
\arr{9.3 1.05} {9.7 0.45} 
\arr{10.3 0.45} {10.7 1.05} 
\arr{11.3 1.05} {11.7 0.45} 
\arr{12.3 0.45} {12.7 1.05} 
\arr{1.3 1.95} {1.6 2.4} 
\arr{2.3 2.55} {2.7 1.95} 
\arr{3.3 1.95} {3.6 2.4} 
\arr{4.3 2.55} {4.7 1.95} 
\arr{5.3 1.95} {5.7 2.55} 
\arr{6.4 2.4 } {6.7 1.95} 
\arr{7.3 1.95} {7.6 2.4 } 
\arr{8.4 2.4 } {8.7 1.95} 
\arr{9.3 1.95} {9.7 2.55} 
\arr{10.3 2.55} {10.7 1.95} 
\arr{11.3 1.95} {11.7 2.55} 
\arr{12.3 2.55} {12.7 1.95} 
\arr{1.3 3.7} {1.6 3.4} 
\arr{2.3 3.3} {2.6 3.6} 
\arr{3.3 3.7} {3.6 3.4} 
\arr{4.4 3.4} {4.7 3.7} 
\arr{5.3 3.7} {5.6 3.4} 
\arr{6.4 3.4} {6.6 3.6} 
\arr{7.4 3.6} {7.6 3.4} 
\arr{8.4 3.4} {8.7 3.7} 
\arr{9.3 3.7} {9.7 3.3} 
\arr{10.4 3.4} {10.7 3.7} 
\arr{11.4 3.6} {11.7 3.3} 
\arr{12.4 3.4} {12.7 3.7} 
\arr{1.3 4.3} {1.7 4.7} 
\arr{2.3 4.7} {2.6 4.4} 
\arr{3.3 4.3} {3.7 4.7} 
\arr{4.3 4.7} {4.7 4.3} 
\arr{5.3 4.3} {5.7 4.7} 
\arr{6.3 4.7} {6.6 4.4} 
\arr{7.4 4.4} {7.7 4.7} 
\arr{8.3 4.7} {8.7 4.3} 
\arr{9.3 4.3} {9.7 4.7} 
\arr{10.3 4.7} {10.7 4.3} 
\arr{11.4 4.4} {11.7 4.7} 
\arr{12.3 4.7} {12.7 4.3} 
\arr{1.3 5.7} {1.7 5.3} 
\arr{2.3 5.3} {2.7 5.7} 
\arr{3.3 5.7} {3.7 5.3} 
\arr{4.3 5.3} {4.7 5.7} 
\arr{5.3 5.7} {5.7 5.3} 
\arr{6.3 5.3} {6.7 5.7} 
\arr{7.3 5.7} {7.7 5.3} 
\arr{8.3 5.3} {8.7 5.7} 
\arr{9.3 5.7} {9.7 5.3} 
\arr{10.3 5.3} {10.7 5.7} 
\arr{11.3 5.7} {11.7 5.3} 
\arr{12.3 5.3} {12.7 5.7} 
\arr{1.3 6.3} {1.7 6.7} 
\arr{2.3 6.7} {2.7 6.3} 
\arr{3.3 6.3} {3.7 6.7} 
\arr{4.3 6.7} {4.7 6.3} 
\arr{5.3 6.3} {5.7 6.7} 
\arr{6.3 6.7} {6.7 6.3} 
\arr{7.3 6.3} {7.7 6.7} 
\arr{8.3 6.7} {8.7 6.3} 
\arr{9.3 6.3} {9.7 6.7} 
\arr{10.3 6.7} {10.7 6.3} 
\arr{11.3 6.3} {11.7 6.7} 
\arr{12.3 6.7} {12.7 6.3} 

\arr{2.3 7.3} {2.7 7.7} 
\arr{3.3 7.7} {3.7 7.3} 
\arr{10.3 7.3} {10.7 7.7} 
\arr{11.3 7.7} {11.7 7.3} 

\arr{1.3 2.72}{1.6 2.84}
\arr{2.4 2.84}{2.7 2.72}
\arr{3.3 2.72}{3.6 2.84}
\arr{4.4 2.84}{4.7 2.72}
\arr{5.3 2.72}{5.6 2.84}
\arr{6.4 2.84}{6.7 2.72}
\arr{7.3 2.72}{7.6 2.84}
\arr{8.4 2.84}{8.7 2.72}
\arr{9.3 2.72}{9.6 2.84}
\arr{10.4 2.84}{10.7 2.72}
\arr{11.3 2.72}{11.6 2.84}
\arr{12.4 2.84}{12.7 2.72}
\setdots<2pt>
\plot 1 0  1.7 0 /
\plot 2.3 0  3.7 0 /
\plot 4.3 0  5.7 0 /
\plot 6.3 0  7.7 0 /
\plot 8.3 0  9.7 0 /
\plot 10.3 0  11.7 0 /
\plot 12.3 0  13 0 /
\plot 1.3 2.6  1.6 2.6 /
\plot 2 2.6  2.7 2.6 /
\plot 3.3 2.6  3.6 2.6 /
\plot 4 2.6  4.7 2.6 /
\plot 5.3 2.6  5.6 2.6 /
\plot 6 2.6  6.7 2.6 /
\plot 7.3 2.6  7.6 2.6 /
\plot 8 2.6  8.7 2.6 /
\plot 9.3 2.6  9.6 2.6 /
\plot 10 2.6  10.7 2.6 /
\plot 11.3 2.6  11.6 2.6 /
\plot 12 2.6  12.7 2.6 /
\plot 1 7  1.7 7 /
\plot 4.3 7  5.7 7 /
\plot 6.3 7  7.7 7 /
\plot 8.3 7  9.7 7 /
\plot 12.3 7  13 7 /
\setsolid
\plot 1 0  1 1.1 /
\plot 1 1.9  1 2 /
\plot 1 3.2  1 3.4 /
\plot 1 4.6  1 5.4 /
\plot 1 6.6  1 7 /
\plot 13 0  13 1.1 /
\plot 13 1.9  13 2 /
\plot 13 3.2  13 3.4 /
\plot 13 4.6  13 5.4 /
\plot 13 6.6  13 7 / }

(1) The diagram below represents the Auslander-\-Rei\-ten quiver for the category
$\mathcal S(5)$, which has the largest number of indecomposable objects
among all representation finite categories of type $\mathcal S(n)$.
We refer to \cite[Section~6.5]{rs-inv} for a detailed description of the 
category $\mathcal S(5)$ and its objects.
In the diagram, each object is represented by its LR-tableau.  Note that each LR-tableau
of an indecomposable object in $\mathcal S(5)$ 
can be refined uniquely to a Klein tableau, so we may omit the 
subscripts of the labels.

\begin{figure}[hb]
$$
\hbox{\beginpicture
\setcoordinatesystem units <.95cm,1.05cm>
\GammaFive
\setsolid
\setquadratic
\plot 1.5 1.6  .5 2.6  1.5 3.6  2.3 4.2  3 5  4 5.7  5 5  5.7 4.2  6.5 3.6  7.3 2.6  
      6.5 1.6  5.5 1  5 .4  4 -.7  3 .4  2.5 1  1.5 1.6 /
\ellipticalarc axes ratio 3:5 360 degrees from 4.9 3.35 center at 4.9 2.7
\ellipticalarc axes ratio 3:5 360 degrees from 3.1 3.35 center at 3.1 2.7
\put{$\mathcal R$} at .4 3
\put{$\scriptstyle Y_2$} at 3.1 3.1 
\put{$\scriptstyle Y_1$} at 3.0 6.6
\put{$\scriptstyle Z$} at 6.75 2.95
\put{$\scriptstyle X$} at 11 3.1
\put{$\scriptstyle C$} at 1.2 2.4

\endpicture}
$$
\end{figure}

\smallskip
Let us consider as in \cite{lr} 
the encircled region $\mathcal R$ consisting of all indecomposables 
which have an entry $\singlebox 2$ in their 4th row. Note that the two ``eyes'' are
not part of the region $\mathcal R$.  Let
$$0\to X\to Y_1\oplus Y_2 \stackrel v\to Z\to 0$$
be the Auslander-Reiten sequence in the category $\mathcal S_1\up^1$ ending at
$Z=P_2^4$; here $X=P_1^3$, $Y_1=P_1^4$ and $Y_2=P_2^3$.   The modules $E$
in $\mathcal R$ are characterized by the existence of a homomorphism
in $\Hom(E,Z)$ which does not factor over $v$ \cite[Theorem 1]{lr},
and also by the existence of a map 
$C=\tau_{\mathcal S(5)}^{-1}X\to Z$, not in $\Im\Hom(C,v)$, which  
factors through $E$ \cite[Proposition 2]{lr}.

\smallskip
(2) As Klein tableaux are refinements of LR-tableaux, the entries in the
Klein tableau provide an even finer selection of indecomposable objects.
When adding subscripts to the entries in the LR-tableau, the entry
$\singlebox 2$ in the 4th row may become one of the symbols
$\singlebox{2_1}$, $\singlebox{2_2}$, or $\singlebox{2_3}$.
In fact, the three modules along the middle diagonal, which are encircled
in the second diagram, all have a box $\singlebox{2_2}$ in the 4th row
of their Klein tableau.   The modules above them will carry a $\singlebox{2_1}$,
and those below them a $\singlebox{2_3}$.

\begin{figure}[ht]
$$
\hbox{\beginpicture
\setcoordinatesystem units <.95cm,1.05cm>
\GammaFive
\setdots<1mm>
\setquadratic
\plot 1.5 1.6  .5 2.6  1.5 3.6  2.3 4.2  3 5  4 5.7  5 5  5.7 4.2  6.5 3.6  7.3 2.6  
      6.5 1.6  5.5 1  5 .4  4 -.7  3 .4  2.5 1  1.5 1.6 /
\setsolid
\plot 2.4 1.5  2.6 1.9  3 2.2  3.3 2.5  3.5 2.9  3.75 3.5  
      4 3.9  4.25 4.2  4.5 4.4
      4.75 4.6  5 4.65  5.25 4.6  5.5 4.5  5.6 4.1  5.5 3.8  5.25 3.55  5 3.4
      4.5 2.9  4 2.3  3.75 1.9  3.5 1.5  3.3 1.1  3 .9  2.55 1  2.4 1.5 /
\setdots<1mm>
\ellipticalarc axes ratio 3:5 360 degrees from 4.9 3.35 center at 4.9 2.7
\ellipticalarc axes ratio 3:5 360 degrees from 3.1 3.35 center at 3.1 2.7
\put{$\scriptstyle Y_2$} at 3.1 3.1 
\put{$\scriptstyle Y_1$} at 3.6 5
\put{$\scriptstyle Y_3$} at 2 .4
\put{$\scriptstyle Z$} at 4.6 4
\put{$\scriptstyle X$} at .6 1.5
\put{$\scriptstyle C$} at 2.6 1.5

\endpicture}
$$
\end{figure}

\smallskip
Returning to the modules which have a $\singlebox{2_2}$ in the 4th row
of their Klein tableau, we consider the Auslander-Reiten sequence
in $\mathcal S_2$ ending at the bipicket $Z=T_2^{4,2}$:
$$0\to X\to Y_1\oplus Y_2\oplus Y_3\stackrel v\to Z\to 0$$
Here, $X=T_2^{3,1}$, $Y_1=T_2^{4,1}$, $Y_2=P_2^3$ and $Y_3=P_0^2$.
According to Theorem~\ref{theorem-prototypes}, each of the
modules $E$ in the marked region 
will admit a map $E\to Z$ which does not factor through $v$.
And according to Theorem~\ref{theorem-factor}, there is a map
$C=\tau_{\mathcal S(5)}^{-1}X \to Z$, not in $\Im\Hom(C,v)$, which 
factors through $E$. 

\smallskip
(3) In order to determine the modules which carry a symbol $\singlebox{\ell_r}$
where  $\ell>2$, the Auslander-Reiten sequences from (\ref{section-s2up})
can be used.  We consider as example the modules which have a 
$\singlebox{4_3}$ in row 5.  They are encircled in the
third diagram.

\begin{figure}[ht]
$$
\hbox{\beginpicture
\setcoordinatesystem units <.95cm,1.05cm>
\GammaFive
\setquadratic
\setsolid
\plot .6 6  .65 5.5  .8 5.2  1.5 4.5  2.6 3.4  3.6 2.4  4.7 .9  
      5 .7  5.3 .9  5.4 1.5  5.3 2  5.27 2.07  5.2 2.1  5 2.1  4.8 2.1  
      4.66 2.17  4.6 2.3  4.5 2.9  4.4 3.5  4 4  3.5 4.5  
      2.5 5.5  1.5 6.5  1.2 6.7  1 6.7  .7 6.4  .6 6 /
\put{$\scriptstyle Y_2$} at 2.7 3.1 
\put{$\scriptstyle Y_1$} at 12 7.7
\put{$\scriptstyle Y_3$} at 4 .7
\put{$\scriptstyle Z$} at 4.55 1.5
\put{$\scriptstyle X$} at 11 6.7
\put{$\scriptstyle C$} at 1.5 6
\put{$\scriptstyle C$} at 13.5 6

\endpicture}
$$
\end{figure}

\smallskip
Consider the module $Z=T_2^{5,3}\up^2$ on the right hand side of the 
marked region.  It occurs as the end term of the Auslander-Reiten sequence
in $\mathcal S_2\up^2$:
$$0\to X \to Y_1\oplus Y_2 \oplus Y_3 \stackrel v\to Z\to 0,$$
here $X=P_3^4$, $Y_1=P_4^4$, $Y_2=P_2^3$, and $Y_3=P_3^5$; this
sequence is pictured on the left hand side in the partial Auslander-Reiten
quiver in (\ref{section-s2up}).  
As predicted by Theorem~\ref{theorem-prototypes}, the modules in the 
region are those which admit a map into $Z$ which does not factor over $v$.
Putting $C=\tau_{\mathcal S(5)}^{-1}(X)$, the modules in this region are
those $E$ for which there is a map $C\to Z$, not in $\Im\Hom(C,v)$, which
factors through $E$ (see Theorem~\ref{theorem-factor}).

\vspace{3cm}

\end{document}